\newcommand{\ZZ}{\mathbb{Z}}
\newcommand{\RR}{\mathbb{R}}
\newcommand{\QQ}{\mathbb{Q}}
\newcommand{\PP}{\mathbb{P}}
\newcommand{\OO}{\mathcal{O}}
\newtheorem{question}{Question}
\newtheorem*{question*}{Question}
\newcommand{\spec}{\text{Spec}\hspace{0.5mm}
}
\newcommand{\McE}{\mathcal{E}}
\newcommand{\McP}{\mathcal{P}}
\newcommand{\ra}{\rightarrow}
\newcommand{\Alb}{\textnormal{Alb}}
\newcommand{\McM}{\mathcal{M}}
\newcommand{\Pic}{\textnormal{Pic}}
\newcommand{\Nef}{\textnormal{Nef}}
\newcommand{\NE}{\overline{\textnormal{NE}}}
\newcommand{\Aut}{\textnormal{Aut}}
\newcommand{\Num}{\equiv_\textnormal{num}}
\newcommand{\Lin}{\equiv_\textnormal{lin}}
\newcommand{\pp}{\prime}
\newcommand{\Qb}{\overline{\mathbb{Q}}}
\newcommand{\Send}{\textnormal{SEnd}}
\newcommand{\Iamp}{\textnormal{IAmp}}
\newtheorem*{tracmmp}{The tractable minimal model program for a dynamical property $\mathcal{D}$}
\newtheorem{theorem}{Theorem}[section]
\newtheorem*{theorem*}{Theorem}
\newtheorem*{conjecture*}{Conjecture}
\newtheorem{conjecture}{Conjecture}
\newtheorem*{proposition*}{Proposition}
\newtheorem{corollary}{Corollary}[theorem]
\newtheorem{lemma}[theorem]{Lemma}
\newtheorem*{lemma*}{Lemma}
\newtheorem{definition}[theorem]{Definition}
\newtheorem{proposition}[theorem]{Proposition}
\newtheorem{example}{Example}
\newtheorem*{example*}{Example}
\newtheorem*{remark*}{Remark}
\newtheorem*{Ingredients*}{Ingredients of the minimal model program for $K_X$}
\newtheorem{Case Q}{Case Q}
\newif\ifhascomments \hascommentstrue
  \newcommand{\matt}[1]{{\color{red}[[\ensuremath{\spadesuit\spadesuit\spadesuit} #1]]}}
  \newcommand{\brett}[1]{{\color{blue}[[\ensuremath{\heartsuit\heartsuit\heartsuit} #1]]}}
  \newcommand{\matt}[1]{}
  \newcommand{\brett}[1]{}
\author{Brett Nasserden\\ Department of Mathematics, Western University\\ \href{mailto:bnasserd@uwo.ca}{bnasserd@uwo.ca} }
\title{Some applications of the minimal model program in arithmetic dynamics}
\date{}
\begin{document}
\maketitle
\begin{abstract}
 We describe a general program for studying the dynamics of surjective endomorphisms of algebraic varieties that are amenable to techniques from the minimal model program. We obtain density results on the pre-periodic points of surjective endomorphisms of varieties admitting an int-amplified endomorphism, and reduce certain cases of the Medvedev-Scanlon conjecture to so called Q-abelian varieties using our approach. We also provide a connection between the existence of an automorphism with positive entropy and group of connected components of a variety. In particular, we show that if $X$ is normal and projective with finitely generated nef cone then $X$ has an automorphism of positive entropy if and only if the group of connected components $\pi_0\Aut(X)$ has an element of infinite order.
\end{abstract}

\begin{section}{Introduction}
One of the most potent tools available to study the geometry of higher dimensional algebraic varieties is the minimal model program initiated by Mori. However, when one tries to use the minimal model program to study to study the dynamics of surjective \emph{morphisms} of varieties the following problem arises. If $f\colon X\ra X$ is a surjective endomorphism and $\pi \colon X\ra Y$ is a contraction morphism, then apriori $f$ descends to a dominant rational mapping $f\colon Y\dashrightarrow Y$ which need not extend to an everywhere defined morphism on $Y$. One must either be content to work with dominant rational maps, or seek situations that guarantee that $f$ extends. As morphisms are significantly better behaved than dominant rational maps, we will pursue this latter case. 

Our goal in this article is to describe a method of obtaining at partial results in arithmetic dynamics for classes of varieties where the operations of the minimal model program can be applied to study endomorphisms. The main examples of such varieties are varieties that admit an \emph{int-amplified} endomorphism. For example, all toric varieties and abelian varieties admit an int-amplified endomorphism. The existence of a single int-amplified endomorphism on a projective variety $X$ drastically effects its birational geometry, so much to the extent that it makes it possible to use the minimal model program to study all surjective endomorphisms of $X$, even the non int-amplified endomorphisms. This approach was pioneered by Zhang in \cite{MR3742591} and extended in \cite{MR4117085} with Meng.

We first describe the general approach and then describe three applications of these ideas. In \ref{sec:preperiodicpoints} we study when a morphism has a dense set of a pre-periodic points. In \ref{sec:MSC} we consider the Medvedev-Scanlon conjecture from the perspective of the	minimal model program. Finally, in \ref{sec:automorphisms} consider the Kawaguchi-Silverman conjecture for automorphisms and prove a criterion for when a variety with a finitely generated nef cone to have an automorphism of positive entropy. 

\begin{subsection}{Our results.}
In  section \ref{sec:preperiodicpoints} we apply these ideas to study when a surjective morphism has a dense set of pre-periodic points. When a morphism amplified, namely when $f^*L=qL$ for some $q>0$ and $L$ ample then Fakhruddin showed in \cite{MR1995861} that there is a dense set of pre-periodic points. However, in some situations it is too restrictive to assume that a moprhism is amplified. 
\begin{theorem}
Let $X$ be a $\QQ$-factorial variety with at worst terminal singularities that admits an int-amplified endomorphism. Suppose further that $X$ is rationally connected with $\Alb(X)=0$. Let $f\colon X\ra X$ be a surjective endomorphism. Then $f$ has a dense set of pre-periodic points.    
\end{theorem}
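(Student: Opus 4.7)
The plan is to use the equivariant minimal model program developed by Meng and Zhang for varieties admitting an int-amplified endomorphism in order to reduce the density statement to Fakhruddin's theorem for polarized endomorphisms. Since $X$ admits an int-amplified endomorphism, the Mori cone $\NE(X)$ is rational polyhedral, and the pullback $f^*$ permutes its extremal rays; after replacing $f$ by a suitable iterate one may assume that $f^*$ fixes each extremal ray. Every $K_X$-negative extremal contraction is then $f$-equivariant, so one can run an $f$-equivariant MMP on $X$.

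I would argue by induction on $\dim X$. Because $X$ is rationally connected, $K_X$ is not pseudo-effective, so the MMP must terminate in an $f$-equivariant Mori fiber space $\pi \colon X' \ra Y$, where $X'$ is obtained from $X$ by a sequence of $f$-equivariant divisorial contractions and flips. At each step one wants the hypotheses to be preserved: the intermediate models and the base $Y$ should remain $\QQ$-factorial with terminal singularities, rationally connected with trivial Albanese, and admit an induced int-amplified endomorphism. Granted this, since $\dim Y < \dim X'$, the inductive hypothesis gives that $f_Y \colon Y \ra Y$ has a dense set of pre-periodic points, while the general fiber of $\pi$ is a Fano variety of Picard rank $1$ carrying an induced int-amplified endomorphism on which some iterate of $f$ restricts to a surjective self-map over each periodic $y \in Y$.

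For the base case of the induction I would appeal to Fakhruddin's theorem directly. On a Fano variety $F$ of Picard rank $1$ with ample generator $L$, any surjective self-map $h\colon F \ra F$ satisfies $h^*L \equiv qL$ for some integer $q \geq 1$; since $F$ is not a point and $h$ is surjective we have $q > 1$, so $h$ is polarized and Fakhruddin's theorem yields a Zariski dense set of pre-periodic points of $h$. Combining the dense pre-periodic sets on the base $Y$ and on each periodic fiber over a dense set of periodic $y \in Y$ then produces a dense set of pre-periodic points of $f$ on $X'$, which one transports back to $X$ using that divisorial contractions and flips are isomorphisms in codimension one.

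The hard part will be verifying equivariance of the MMP steps with respect to the arbitrary $f$ rather than only the given int-amplified endomorphism, and then controlling what happens on fibers of the Mori contraction. The first point is essentially the content of the Meng-Zhang theory, which ensures that on a variety admitting an int-amplified endomorphism all surjective endomorphisms become compatible (after iteration) with the same MMP. The second point requires checking that the general fiber of $\pi$ inherits enough of the hypotheses of the theorem, and that density of pre-periodic points on fibers over periodic points of $Y$ can be combined with density on $Y$ to yield density on the total space; this fiber-analysis, together with showing that the inductive setup actually cascades down to a Fano of Picard rank one, is the most delicate technical ingredient.
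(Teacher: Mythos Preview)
Your overall architecture matches the paper's: run an $f$-equivariant MMP (via Meng--Zhang), show density of pre-periodic points is invariant under divisorial contractions and flips, and then treat the Mori fiber space step by combining density on the base with density on periodic fibers. The paper inducts on Picard number rather than dimension, but that is a minor difference.

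There is, however, a genuine gap in your base case. You assert that on a Fano variety $F$ of Picard rank $1$ with ample generator $L$, any surjective $h\colon F\to F$ satisfies $h^*L\equiv qL$ with $q>1$. This is false: nothing prevents $q=1$, i.e.\ $h$ can be an automorphism. Already $F=\PP^n$ with $h\in\mathrm{PGL}_{n+1}$ gives $q=1$, and such an $h$ need not have a dense set of pre-periodic points (an irrational rotation of $\PP^1$, for instance, has none). The hypothesis that $X$ admits an int-amplified endomorphism does not save you here: that endomorphism is not $f$, and while the fibers of the Mori fiber space may inherit an int-amplified endomorphism from it, the map $h$ you care about is the restriction of an iterate of your \emph{arbitrary} surjective $f$ to a periodic fiber, and that restriction can perfectly well be an automorphism with $\lambda_1=1$.

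This is exactly the obstruction the paper isolates. Rather than claiming an unconditional proof, the paper introduces the notion of a fibering contraction having \emph{enough pre-periodic points} (Definition~\ref{def:enoughpreperiodicpoints}), and the theorem actually established in the body (Theorem~\ref{thm:intamplifiedpreper1}) carries the additional hypothesis that $f$ admits a tractable MMP with enough pre-periodic points. The paper explicitly flags the problematic case $\lambda_1(f_i\mid_{\phi_{i+1}})=1$ on a fibering step as the place where the argument may fail. So your proposal correctly identifies the strategy, but the step you call ``the base case'' does not go through, and the paper does not claim that it does.
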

We also illustrate our ideas with the example of toric morphisms and the toric minimal model program to obtain
\begin{theorem}
		Let $X$ be a $\QQ$-factorial projective toric variety defined over $\overline{\QQ}$. Let $f\colon X\ra X$ be a surjective toric morphism. Then the set of pre-periodic points of $f$ is Zariski dense. 
	\end{theorem}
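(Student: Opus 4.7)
The plan is to exploit the open torus $T \subset X$ directly. A surjective toric morphism $f \colon X \to X$ restricts, by the definition of a toric morphism, to a group endomorphism $\phi \colon T \to T$ of the open torus, corresponding to a nonsingular integer matrix $M$ acting on the cocharacter lattice $N \cong \ZZ^{\dim X}$; nonsingularity of $M$ follows from the surjectivity of $f$, since otherwise $\phi(T)$ would be a proper subtorus and its closure a proper toric subvariety of $X$. The first step is simply to record this torus-level description of $f$.

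The main step is a torsion argument. For each positive integer $k$, the $k$-torsion subgroup $T[k] \subset T$ is finite, and $\phi(T[k]) \subseteq T[k]$ because $\phi$ is a group homomorphism. Consequently every element of $T[k]$ has finite forward $f$-orbit, and is therefore pre-periodic for $f$. Taking the union over $k$ yields the entire torsion subgroup $T_{\textnormal{tors}} = \bigcup_{k \geq 1} T[k]$ as a set of pre-periodic points of $f$.

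To finish, I would invoke density: $T_{\textnormal{tors}}$ is Zariski dense in $T$ (already in one variable, the roots of unity are Zariski dense in $\Gm$, and $T$ is a product of copies of $\Gm$), and $T$ is Zariski open and dense in $X$, so $T_{\textnormal{tors}}$ is Zariski dense in $X$, which is exactly the claim.

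I do not expect any real obstacle here: all the work is done by elementary group-theoretic properties of the torus. The connection to the paper's broader theme is that one can alternatively run the toric MMP, using that $X$ admits an int-amplified endomorphism (for example, multiplication by $n$ on $T$ extended to $X$) so that $f$ descends through extremal contractions and flips to a simpler toric Mori fibre space, where the same torsion reasoning applies fibrewise. The only nontrivial ingredient in that alternative route would be verifying compatibility of $f$ with the extremal toric contractions, but the direct argument above makes this unnecessary.
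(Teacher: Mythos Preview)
Your argument is correct and considerably more direct than the paper's. The paper deliberately proves this theorem as an illustration of the tractable MMP machinery: it runs the toric minimal model program, shows that every fibering contraction appearing in it has \emph{enough pre-periodic points} (because the restriction of $f$ to a general fibre is a toric endomorphism of a Picard-number-one toric variety, hence polarized or the identity), and then invokes the general inductive Theorem~\ref{thm:intamplifiedpreper1}. Indeed, the paper itself remarks just before stating the theorem that ``one could likely obtain this result by direct methods, its utility is as a proof of concept.'' Your torsion argument is exactly such a direct method: by observing that the torus-level map is a group homomorphism and that each finite torsion subgroup $T[k]$ is stable under it, you get density of pre-periodic points with no birational geometry at all. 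What you lose is the illustration of how the MMP template functions---how birational contractions and flips preserve density of pre-periodic points, and how the fibre condition $F(\mathcal{D})$ is verified in a concrete case---which is the point of that section of the paper. What you gain is a one-paragraph proof that applies even without $\QQ$-factoriality. One incidental remark: the nonsingularity of $M$ is not actually used anywhere in your argument, since the torsion-preservation step only requires that $\phi$ be a group homomorphism.
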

One could likely obtain this result by direct methods, its utility is as a proof of concept.	

We now turn to the Medvedev-Scanlon conjecture. \begin{definition}
			Let $X$ be a projective variety and suppose that $f\colon X\ra X$ is a surjective endomorphism. We say that $f$ is \emph{fiber preserving} if there is a positive dimensional variety $Z$ and a dominant rational map $\psi\colon X\dashrightarrow Z$ such that $\psi\circ f=\psi$. 
		\end{definition}
		The conjecture is usually stated as follows.
		\begin{conjecture}[The Medvedev-Scanlon conjecture: See conjecture 7.14 \cite{MR3126567}]
			Let $X$ be an irreducible variety defined over an algebraically closed field $F$ of characteristic 0. Let $\phi\colon X\dashrightarrow X$ be a dominant rational map. If $\phi$ is not fiber preserving then there is a point $x\in X(F)$ with a forward dense orbit under $\phi$.
		\end{conjecture}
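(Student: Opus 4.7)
The plan is to work within the framework of the paper: assume $X$ is $\QQ$-factorial with mild (say terminal) singularities, that $X$ admits an int-amplified endomorphism, and that $\phi = f$ is a surjective endomorphism (not merely a rational map). Under this hypothesis, by the Meng--Zhang equivariant MMP (after passing to a suitable iterate of $f$), each MMP contraction is $f$-equivariant and the descended endomorphism on the output remains a morphism. The strategy is to use this to reduce the statement along MMP steps to the case of Q-abelian varieties, which should then be handled by pulling back to an abelian variety via an étale cover and invoking the abelian case.

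The first observation I would record is that ``not fiber preserving'' behaves well under MMP contractions: if $\pi\colon X \to X'$ is a contraction and $f$ descends to a morphism $f'\colon X' \to X'$, and if $f'$ were fiber preserving via some rational map $\psi'\colon X' \dra Z$, then $\psi'\circ \pi\colon X \dra Z$ would exhibit $f$ as fiber preserving. So at every stage of the MMP the descended endomorphism remains non fiber preserving, and the conclusion of the conjecture is equivalent on the two models (since $\pi$ is birational).

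Next I would run the $f$-equivariant MMP. It terminates either at a Mori fiber space $X' \to Y$ or at a minimal model. In the minimal model case, the combination of $K_{X'}$ nef with an int-amplified endomorphism forces, by the Meng--Zhang structure theorem, $X'$ to be a Q-abelian variety; then one finishes via the étale cover by an abelian variety, where dense-orbit statements for dominant non-fiber-preserving maps are known through equidistribution and Kronecker-style arguments. In the Mori fiber space case one inducts on dimension, using that $Y$ inherits an int-amplified endomorphism and that a very general fiber is a lower-dimensional variety on which the restriction of a suitable iterate of $f$ again admits int-amplification.

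The hard part is the Mori fiber space step. To produce a dense orbit on $X'$ one must combine a dense orbit on the base $Y$ with a dense orbit on a fiber over a carefully chosen specialization, and the base point must avoid countably many ``bad'' loci where the fiber dynamics degenerate. This is the familiar obstacle of assembling orbits from base and fibers, and it typically requires a Hilbert irreducibility or height-theoretic argument over a number field; it is also the place where the non-fiber-preserving hypothesis is genuinely used, because otherwise the only candidate map $\psi\colon X \dra Z$ would trivialize the fiberwise dynamics and prevent lifting. A secondary, and milder, obstacle is ensuring that the étale cover in the Q-abelian case can be chosen so that an iterate of $f$ lifts to an endomorphism of the abelian cover, which should be handled by standard lifting of endomorphisms along étale covers after replacing $f$ by an iterate.
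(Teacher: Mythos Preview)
The statement you are attempting to prove is a \emph{conjecture}; the paper does not prove it and does not claim to. What the paper actually establishes is a conditional reduction (Theorem~\ref{thm:intamproadblock} and its corollary): under the int-amplified hypothesis, the Medvedev--Scanlon conjecture follows \emph{provided} one assumes (i) the conjecture for all $Q$-abelian varieties, and (ii) that every Mori fiber space arising in the equivariant MMP has relative difficulty zero. Your outline rediscovers exactly this reduction---the descent of ``not fiber preserving'' along contractions is Lemma~\ref{lem:rationalfiblem}, the birational invariance is Lemma~\ref{lem:MCbirkey} and Corollary~\ref{cor:MCbirkey}, and the induction on Picard number is the proof of Theorem~\ref{thm:intamproadblock}---but you then treat the two endpoints as settled, and they are not.

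Concretely: your claim that the $Q$-abelian case ``is known through equidistribution and Kronecker-style arguments'' via an \'etale abelian cover is not justified. Even granting that one can lift an iterate of $f$ to the abelian cover (which the paper records in Theorem~\ref{thm:MengZhangMMP}(2)(b)), the Medvedev--Scanlon conjecture for arbitrary surjective endomorphisms of $Q$-abelian varieties is precisely what the paper takes as hypothesis~(1), not as a known input. Second, and more seriously, you yourself identify the Mori fiber space step as ``the hard part'' and sketch only the shape of the difficulty (assembling a dense orbit on the total space from base and fiber data). The paper formalizes this obstacle as the notion of \emph{relative difficulty zero} and leaves it as hypothesis~(2); nothing in your proposal overcomes it. So the proposal is not a proof of the conjecture but a rediscovery of the paper's reduction, with the two genuine gaps misreported as routine.
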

We describe an approach using the minimal model program to tackle the certain cases of the Medvedev-Scanlon conjecture. While our results are preliminary we obtain the following. Recall that an $\emph{Q-Abelian}$ variety is a normal projective variety $X$ that admits a finite surjection $h\colon A\ra X$.	

\begin{theorem}
Assume that the Medvedev-Scanlon conjecture holds for surjective endomorphisms of all $Q$-abelian varieties. Then the Medvedev-Scanlon conjecture holds for all $\QQ$-factorial normal projective varieties with at worst terminal singularities that satisfy the following two conditions:
\begin{enumerate}
	\item $X$ admits an int-amplified endomorphism.
	\item $\kappa(X)= 0$. 
\end{enumerate}
\end{theorem}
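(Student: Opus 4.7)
The plan is to combine structural results for varieties admitting int-amplified endomorphisms with the assumption $\kappa(X)=0$ in order to reduce to the Q-abelian case supplied by the hypothesis. The key input is the equivariant MMP for int-amplified endomorphisms developed by Meng-Zhang in \cite{MR4117085}, together with their structural classification of the resulting minimal models.

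First, because $X$ is $\QQ$-factorial with terminal singularities and admits an int-amplified endomorphism, one can run the MMP on $X$ in a manner compatible with a positive iterate of the int-amplified endomorphism. Since $\kappa(X)=0$, this MMP terminates at a minimal model $Y$ with $K_Y\equiv 0$, and $Y$ inherits an int-amplified endomorphism. Work of Meng-Zhang then identifies such a $Y$ as Q-abelian: a Beauville-Bogomolov style decomposition for $K$-trivial varieties, combined with the fact that strict Calabi-Yau and irreducible symplectic factors do not admit int-amplified endomorphisms, forces $Y$ to admit a finite surjection from an abelian variety.

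Next, for an arbitrary surjective endomorphism $f\colon X\ra X$, I would descend a suitable iterate along the same equivariant MMP to produce $\bar f\colon Y\ra Y$. Non-fiber-preservation descends correctly: if $\bar f$ preserved a nontrivial dominant rational map $\psi\colon Y\dra Z$, then $\psi\circ \pi\colon X\dra Z$, where $\pi\colon X\dra Y$ is the birational MMP map, would be a nontrivial dominant rational map preserved by $f$, contradicting the hypothesis on $f$. Applying the assumed Q-abelian case of the Medvedev-Scanlon conjecture to $\bar f$ yields a point $y\in Y$ with forward dense orbit under $\bar f$; choosing a preimage $x\in X$ of $y$ in the open locus where $\pi$ is an isomorphism, and whose forward orbit under $f$ remains in this open locus, produces a point of $X$ with forward dense orbit under $f$.

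The main obstacle is the first step: identifying $Y$ as Q-abelian requires the full strength of the Meng-Zhang theory of int-amplified endomorphisms to exclude non-abelian factors in the Kodaira-zero decomposition of $Y$. A secondary technical issue lies in the final lifting argument, where one must verify that a preimage of a dense-orbit point can be selected so as to avoid the exceptional loci of the divisorial contractions and the indeterminacy loci of the flips encountered along $\pi$; this relies on the fact that a dense orbit cannot be contained in any proper closed subvariety, so that a generic preimage meets the locus of definition.
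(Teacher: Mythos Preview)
Your proposal is correct and follows essentially the same route as the paper: run the Meng--Zhang equivariant MMP, use $\kappa(X)=0$ to conclude that only birational steps occur so the output is a positive-dimensional $Q$-abelian variety, descend non-fiber-preservation, apply the hypothesis there, and pull the dense orbit back along the birational contractions. The paper differs only in packaging: it cites the Meng--Zhang theorem as a black box (so the identification of the minimal model as $Q$-abelian is already part of the cited statement rather than argued via Beauville--Bogomolov), and it isolates the birational lifting step as a standalone lemma showing that the invariant $\textnormal{Difficulty}(X,f)$ is preserved under divisorial contractions and flips --- in particular, your worry about keeping the orbit inside the isomorphism locus is unnecessary, since density of $\mathcal{O}_g(y)$ alone forces $f^n(x)$ to land in any prescribed open set of $X$ for suitable $n$.
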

In other words, the  Medvedev-Scanlon conjecture for Q-abelian varieties would give interesting results on the Medvedev-Scanlon conjecture for all varieties admitting an int-amplified endomorphism of Kodaira dimension 0. 

Finally in section 
\ref{sec:automorphisms} we consider automorphisms of projective varieties from this perspective. This leads to the following result.

\begin{theorem}
			Let $X$ be a normal projective variety over $\Qb$ with a finitely generated nef cone. Let $f\in \pi_0\Aut(X)$. Then $\lambda_1(f)>1\iff$ f has infinite order in $\pi_0\Aut(X)$. In particular a normal projective variety $X$ with finitely generated nef cone has an automorphism of positive entropy if and only if $\pi_0 \Aut(X)$ has an element of infinite order.
		\end{theorem}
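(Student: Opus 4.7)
The plan is to exploit the polyhedral structure of $\Nef(X)$ together with the finiteness of the kernel of the action of $\pi_0\Aut(X)$ on $N^1(X)$. First I would note that $\Aut^0(X)$ acts trivially on the discrete lattice $N^1(X)_\ZZ$, so pullback factors through $\pi_0\Aut(X)$. The crucial ingredient is a Lieberman--Fujiki-type statement that the resulting homomorphism $\rho\colon\pi_0\Aut(X)\to GL(N^1(X))$ has finite kernel $K$. Granting this, the implication $\lambda_1(f)>1 \Rightarrow f$ has infinite order in $\pi_0\Aut(X)$ is immediate: $f^{*}$ has an eigenvalue of modulus greater than $1$ and so has infinite order on $N^1(X)$, which is incompatible with $f$ having finite order in $\pi_0\Aut(X)$ (since $K$ is finite).

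For the converse, suppose $f\in\pi_0\Aut(X)$ has infinite order. Since $\Nef(X)$ has only finitely many extremal rays $R_1,\dots,R_n$ and $f^{*}$ preserves the nef cone, $f^{*}$ permutes these rays. After passing to a power $f^k$ (harmless since $\lambda_1(f^k)=\lambda_1(f)^k$) I may assume $f^{*k}$ fixes each $R_i$ setwise, so acts on $R_i$ by scaling by some $\lambda_i>0$. The task reduces to showing that some $\lambda_i>1$.

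I would first dispose of the case that every $\lambda_i=1$: then $f^{*k}$ fixes each $R_i$ pointwise, and since the $R_i$ span $N^1(X)_\RR$ (as $\Nef(X)$ has nonempty interior by projectivity), we would have $f^{*k}=\iden$ on $N^1(X)$; hence $f^k\in K$, which is finite, so some further power of $f$ is trivial in $\pi_0\Aut(X)$, contradicting that $f$ has infinite order. Therefore some $\lambda_{i_0}\neq 1$. If $\lambda_{i_0}>1$ then the spectral radius satisfies $\lambda_1(f^k)\geq\lambda_{i_0}>1$ and we are done. If instead $\lambda_{i_0}<1$, I would extend a generator of $R_{i_0}$ to a basis of $N^1(X)_\RR$ taken from extremal rays, say $\{v_{i_0},v_{j_1},\dots,v_{j_{d-1}}\}$ with $d=\dim N^1(X)_\RR$. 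Since $f^{*k}$ preserves $N^1(X)_\ZZ$ together with its inverse, $\det(f^{*k})=\pm 1$; computing the determinant in this basis shows $\det(f^{*k})=\lambda_{i_0}\prod_l\lambda_{j_l}$, which must equal $1$ by positivity of the $\lambda$'s. Hence $\prod_l\lambda_{j_l}=1/\lambda_{i_0}>1$, so some $\lambda_{j_l}>1$ and again $\lambda_1(f)>1$.

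The main obstacle is securing the input step --- confirming that $\ker\rho$ is finite in the setting of normal projective varieties over $\Qb$. Lieberman's theorem supplies this for compact Kähler manifolds and an analogous statement for projective varieties is known via the finiteness properties of the automorphism group scheme, but checking that these references genuinely cover the (possibly singular) normal projective case is where I would expect to invest the most technical care. Once this is in hand, the remainder of the argument reduces to elementary linear algebra on the finite set of extremal rays of $\Nef(X)$, and the last claim about positive entropy is then just the observation that $\lambda_1(f)>1$ is equivalent to $f$ having positive topological entropy (via Gromov--Yomdin).
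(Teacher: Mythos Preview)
Your argument is correct and is essentially the paper's own proof: the paper likewise passes to a power fixing the extremal rays (packaged there via an auxiliary subgroup $\mathfrak{D}(X)\subseteq\pi_0\Aut(X)$), uses the determinant-$\pm 1$ constraint together with positivity of the scaling factors to force all eigenvalues equal to $1$ when $\lambda_1(f)=1$, and then invokes finiteness of the kernel of $\pi_0\Aut(X)\to GL(N^1(X))$ to derive a contradiction. The input you flag as the main obstacle---finiteness of $\ker\rho$ for normal projective varieties---is supplied in the paper by citing Brion's results on automorphism group schemes (specifically, that the stabilizer of an ample numerical class in $\pi_0\Aut(X)$ is finite), so your concern is well placed but already addressed in the literature.
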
	
Here $\lambda_1(f)$ is the \emph{dynamical degree} of $f$ which is the spectral raduis of the pull back morphism $f^*\colon N^1(X)\ra N^1(X)$.

While many of the results in these sections are preliminary, we hope that they will bear fruit in the future.

\end{subsection}

\end{section}

\begin{section}{Dynamical Preliminaries}

Given a projective variety $X$ defined over $\Qb$ (more generally any algebraically closed field) it is an old question to study the group of symmetries of $X$, in other words its automorphisms. One might also generalize this notion and instead of studying $\Aut(X)$ one might study $\textnormal{Sur}(X)$. That is, the monoid of all surjective self morphisms $X\ra X$. It turns out that when $\textnormal{Sur}(X)$ is strictly larger then $\Aut(X)$ we should expect that $X$ has a special geometry. Evidence for this meta-principle is the following.

\begin{theorem}[Nakayama Classification of surfaces: \cite{MR1934136}]\label{thm: Nakayamaclassification}
Let $X$ be a smooth projective surface defined over $\Qb$. Suppose that $f\colon X\ra X$ is a surjective endomorphism that is not an autmorphism. 

\begin{enumerate}
    \item If $\kappa(X)\geq 0$ then $X$ is an abelian surface, a hyper-elliptic surface, or a minimal elliptic surface with $\kappa(X)=1$ and $\chi(\OO_X)=0$.
    \item If $X$ is a ruled surface that $X$ is one of the following.
    \begin{enumerate}
        \item A toric surface.
        \item A $\PP^1$-bundle over an elliptic curve.
        \item A $\PP^1$-bundle over a smooth genus $g>1$ curve $C$ that is trivialized after a finite etale base change. 
    \end{enumerate}
\end{enumerate}
	
\end{theorem}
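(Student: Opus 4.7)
The plan is to proceed by a case analysis on the Kodaira dimension $\kappa(X)$, relying on the Enriques-Kodaira classification together with the ramification formula $f^{\ast}K_X \sim K_X + R_f$, where $R_f \geq 0$ is the ramification divisor of $f$. A preliminary observation is that $f$ must be finite of degree $d \geq 2$: surjectivity together with smoothness of $X$ preclude divisorial contractions (a contraction would strictly drop the Picard number, which cannot be compensated by an endomorphism of $X$), and $d = 1$ would make $f$ birational and hence, for smooth projective surfaces, an isomorphism.

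For Part (1), assume $\kappa(X) \geq 0$. When $\kappa(X) = 2$, so that $X$ is of general type, the comparison $\mathrm{vol}(f^{\ast}K_X) = d \cdot \mathrm{vol}(K_X)$ together with $f^{\ast}K_X \sim K_X + R_f$ and the bigness of $K_X$ forces $d = 1$, a contradiction. When $\kappa(X) = 0$, one passes to the unique minimal model, to which $f$ descends; since $K_X$ is torsion (or $2K_X \sim 0$ in the Enriques case), the formula forces $R_f = 0$, so $f$ is \'etale. The simple-connectedness of K3 surfaces and the fact that the unique nontrivial \'etale cover of an Enriques surface is its K3 double cover then rule out those two cases, leaving only abelian and bielliptic (hyperelliptic) surfaces. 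When $\kappa(X) = 1$, the Iitaka elliptic fibration $\pi \colon X \to C$ is canonical, so $f$ descends to a morphism $g \colon C \to C$; Kodaira's canonical bundle formula for elliptic fibrations expresses $\chi(\OO_X)$ in terms of the multiplicities of singular fibers, and comparing the Euler characteristics on both sides under pullback by $g$ forces $\chi(\OO_X) = 0$ (while a further MMP step ensures that $X$ is the minimal elliptic model).

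For Part (2), $\kappa(X) = -\infty$, so that $X$ is ruled. If $X$ is rational, one runs the MMP, with $f$ constraining the extremal contractions to be $f^{\ast}$-invariant. The resulting minimal model is $\PP^2$ or a Hirzebruch surface, each toric, and one propagates the toric structure back through the MMP by verifying that every blow-up point lies in the fixed-point scheme of some iterate of $f$, so that the torus action lifts. If $X$ is irrational, the ruling $\pi \colon X \to C$ has base of genus $\geq 1$, and $f$ descends to $g \colon C \to C$. For $g(C) = 1$, $g$ is an isogeny of an elliptic curve, and any $(-1)$-curve in $X$ would be destroyed under a high iterate of $f$, forcing $X$ to be a $\PP^1$-bundle. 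For $g(C) \geq 2$, $g$ has finite order in $\Aut(C)$, and pulling back by a sufficiently large iterate provides an \'etale base change that trivializes the $\PP^1$-bundle.

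The central obstacle is the rational ruled case in Part (2): showing that a non-minimal rational surface admitting a non-invertible endomorphism is genuinely toric (not merely birational to a toric surface) requires careful control of where the exceptional curves of the MMP sit relative to the fixed-point schemes of $f$ and its iterates, so that the torus action lifts along each blow-up. The elliptic case $\kappa(X) = 1$ is also subtle, since one must extract $\chi(\OO_X) = 0$ from Kodaira's canonical bundle formula and the induced action of $g$ on singular fibers; but this becomes a concrete, if intricate, classical computation once the descent of $f$ to the base is in place.
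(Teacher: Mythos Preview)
The paper does not supply a proof of this statement: it is quoted from Nakayama's paper \cite{MR1934136} as a background classification result, and no argument is given in the present text. There is therefore no in-paper proof to compare your proposal against.

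Judged against the actual argument in the literature, your outline follows the same architecture (case analysis on $\kappa(X)$, the ramification formula $f^{\ast}K_X\sim K_X+R_f$, and the Enriques--Kodaira classification), and the cases $\kappa(X)\geq 0$ are correct at the level of a sketch. The places where your proposal underestimates the difficulty are both in Part~(2). For $g(C)\geq 2$, once you iterate $f$ so that the base map $g$ becomes the identity, you only know that $f$ restricts to a degree-$d$ endomorphism on each $\PP^1$-fiber; it does \emph{not} follow from this alone that a finite \'etale base change trivializes the bundle---iterating $f$ is not a base change of $C$ at all. One needs the structure theory of ruled surfaces over higher-genus curves together with an analysis of which $\PP^1$-bundles admit a fiberwise endomorphism of degree $>1$, and this is genuine work. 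In the rational case, your phrase ``every blow-up point lies in the fixed-point scheme of some iterate of $f$'' is not a verification step but the heart of the matter: showing that the totally invariant divisor configuration forms a toric boundary is a substantial intersection-theoretic argument and occupies most of Nakayama's effort in that case. Your proposal is a reasonable roadmap, but in these two places it presents the key lemmas as routine checks when they are in fact the real content of the theorem.
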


Unfortunately results this strong are not known in higher dimensions. However there is some work in \cite{MR2359102}. The main two examples of varieties that admit surjective endomorphisms that are not automorphisms are Abelian varieties and toric varieties.

\begin{section}{Surjective Endomorphisms of projective varieties}\label{sec:surjectivemorphisms}
Here we develop the theory of surjective endomorphisms of projective varieties. Most results are easy and folklore but we include for lack of a suitable reference for certain needed results. 
	
	\begin{proposition}[Dominant morphism of projective varieties is surjective]\label{prop:dom=sur}
		Let $X$ be irreducible projective varieties defined over $\Qb$. Let $f\colon X\ra Y$ be a dominant morphism. Then $f$ is surjective.
	\end{proposition}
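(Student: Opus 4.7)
The plan is to exploit the standard fact that projective varieties are proper, so that morphisms out of them are closed maps. Concretely, I would first observe that since $X$ is projective over $\Qb$ (in particular proper over $\spec \Qb$), the morphism $f\colon X \to Y$ is a proper morphism, hence a closed map on the underlying topological spaces. Therefore the image $f(X)$ is a closed subset of $Y$.

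Next I would use the hypothesis that $f$ is dominant, which by definition says $f(X)$ is dense in $Y$, i.e.\ $\overline{f(X)} = Y$. Combining this with the previous step, $f(X)$ is simultaneously closed and dense in $Y$, hence $f(X) = \overline{f(X)} = Y$, which is exactly surjectivity.

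The irreducibility hypothesis on $Y$ is not strictly needed for the argument; what matters is properness of $X$ and density of the image. The only subtlety worth flagging is making sure one invokes properness relative to the correct base: $X$ is projective over $\spec \Qb$, and $Y$ is (at least) separated over $\spec \Qb$, so $f$ is the composition of the graph embedding $X \hookrightarrow X \times Y$ (a closed immersion, since $Y$ is separated) with the projection $X \times Y \to Y$ (proper, since $X$ is proper), and therefore $f$ itself is proper.

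There is no real obstacle here; the statement is foundational and the proof is a two-line topological argument once properness is invoked. The only thing to be careful about is not to accidentally use irreducibility of $X$ in an essential way, since the argument works for any proper source, and to phrase things so that the reader sees that the arithmetic hypothesis (working over $\Qb$) plays no role beyond fixing a base field.
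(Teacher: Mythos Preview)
Your argument is correct and matches the paper's approach essentially line for line: the paper also factors $f$ through the graph $\Gamma_f \subseteq X \times Y$, uses that the graph map is a closed immersion and the projection $X \times Y \to Y$ is closed (by projectivity/properness of $X$) to conclude the image is closed, and then combines closedness with density to get surjectivity. The paper even explicitly offers the same alternative phrasing you give, namely that $f$ is proper and hence closed.
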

	\begin{proof}
		The morphism $f$ factors as $X\mapsto \Gamma_f\mapsto Y$ where $\Gamma_f\subseteq X\times Y$ is the graph of $f$. Since the morphism $X\ra \Gamma_f$ is a closed immersion and $X\times Y\ra Y$ is closed as $X,Y$ are projective we have that the image of $f$ is closed. Since $f$ is dominant it must be the whole of $Y$. Alternatively, one may use that $f$ must be proper, and therefore is closed.  
	\end{proof}

	\begin{proposition}\label{prop:surjective=finite}
		Let $X$ be an irreducible projective varieties defined over $\Qb$. Let $f\colon X\ra X$ be a dominant morphism, then $f$ is finite.
	\end{proposition}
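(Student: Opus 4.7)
The plan is to verify the two standard conditions that together imply $f$ is finite: properness (immediate from $X$ projective) and quasi-finiteness. Thus the real content is quasi-finiteness.

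I would first establish generic finiteness. Since $f$ is dominant, the induced field map $f^*\colon k(X)\hookrightarrow k(X)$ is injective. Both fields have transcendence degree $n=\dim X$ over $\Qb$, so $k(X)/f^*k(X)$ is algebraic; combined with the fact that $k(X)$ is finitely generated over $\Qb$ (hence over $f^*k(X)$), this extension is finite of some degree $d$. In particular, the generic fiber of $f$ is $0$-dimensional of degree $d$.

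For quasi-finiteness I would invoke the Stein factorization $f=g\circ h$, where $g\colon Z\to X$ is finite and $h\colon X\to Z$ has connected fibers with $h_*\OO_X=\OO_Z$. Since $Z=\mathrm{Spec}_X(f_*\OO_X)$ and $X$ is integral, $Z$ is integral as well; matching the generic degree $d$ of $f$ against that of the finite map $g$ forces $h$ to have generic fiber a single reduced point, so $h$ is birational. A convenient preliminary simplification is to pass to the normalization $\tilde X\to X$: the morphism $f$ lifts uniquely to a surjective $\tilde f\colon\tilde X\to\tilde X$, and finiteness of $\tilde f$ is equivalent to finiteness of $f$ because the normalization map is itself finite. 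In the normal case $Z$ also becomes normal, as $f_*\OO_{\tilde X}$ is a sheaf of integrally closed domains. The question thus reduces to showing that the proper birational morphism $h\colon\tilde X\to Z$ between normal projective varieties (with $h_*\OO_{\tilde X}=\OO_Z$) is an isomorphism, whence $f=g\circ h$ is evidently finite.

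The main obstacle is this last step: Zariski's Main Theorem alone yields only that the fibers of $h$ are connected, not that they are singletons --- blow-downs of smooth surfaces give nontrivial examples of birational proper morphisms with $h_*\OO=\OO$ that are not isomorphisms. The extra input must come from the self-map structure $f=g\circ h$. Indeed, if $h$ contracted a positive-dimensional subvariety $V\subseteq\tilde X$ to $h(V)\subsetneq Z$, then $\dim f(V)=\dim g(h(V))=\dim h(V)<\dim V$, so $f$ would also contract $V$. I would aim for a contradiction from this either (i) by iterating $f$ and using Noetherianity to force an infinite strictly descending chain of contracted subvarieties, or (ii) by an intersection-theoretic argument with an ample class $L$: the class $f^*L$ is nef with top self-intersection $(f^*L)^n=d\cdot L^n>0$, hence big, and the self-map symmetry together with Nakai's criterion applied to $V$ should obstruct the contraction. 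This positivity-plus-iteration step is where I would expect to have to work hardest.
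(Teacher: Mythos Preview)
Your outline is headed in a workable direction, but you have left the decisive step unfinished: after Stein factorization you still need to rule out that $h$ (equivalently $f$) contracts a positive-dimensional subvariety, and neither your iteration sketch (i) nor your positivity sketch (ii) is actually carried out. Sketch (i) as stated does not produce a descending chain without further input, and sketch (ii) gestures at Nakai but does not isolate the right invariant. So as written there is a genuine gap at exactly the point you flag as ``where I would expect to have to work hardest.''

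The paper bypasses all of this with a one-line linear-algebra argument on cycle classes. Consider the push-forward $f_*\colon N_1(X)_\QQ\to N_1(X)_\QQ$ on curve classes modulo numerical equivalence. This map is surjective (dominant self-map, projection formula), and $N_1(X)_\QQ$ is finite-dimensional, so $f_*$ is injective. In particular, no irreducible curve $C\subseteq X$ can be contracted by $f$, since that would force $f_*[C]=0$ with $[C]\neq 0$. Hence every fiber of $f$ is zero-dimensional; combined with properness, $f$ is finite. This replaces your Stein factorization, normalization, and the unfinished contraction argument in one stroke. Your idea (ii) is morally in the same neighborhood --- intersection numbers are what make $N_1(X)$ finite-dimensional --- but the paper's packaging via $N_1$ avoids having to analyze $f^*L$ on specific subvarieties and gives the ``no contracted curve'' conclusion immediately.
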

	\begin{proof}
		Consider $N_1(X)_\QQ$. Since $f_*\colon N_1(X)_\QQ\ra N_1(X)_\QQ$ is surjective, it is also injective as  $N_1(X)_\QQ$ is a finite dimensional vector space. It follows that $f_*$ does not contract any curve. In particular, that $f$ has finite fibers. Since $f$ is a proper morphism with finite fibers it is finite as needed. 
	\end{proof}
	
	\begin{lemma}[exercise 12.22 \cite{MR4225278}]\label{lem:suropen}
		Let $f\colon X\ra Y$ be a dominant integral endomorphism between integral schemes. Assume $Y$ is normal. Then $f$ is universally open. 
	\end{lemma}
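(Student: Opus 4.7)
The plan is to reduce to the affine setting and describe images of basic opens explicitly using the minimal polynomial, whose coefficients land in $B$ thanks to the normality hypothesis. Since openness and integrality are local on the target and integral morphisms are affine, I first pass to $Y = \spec B$, $X = \spec A$, where $B \hookrightarrow A$ is an injective integral ring map (dominance combined with integrality of $X$ gives injectivity), $B$ is a normal integral domain, and $A$ is an integral domain.

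For plain openness I would compute the image of a basic open $D(a)$. Since $a$ is integral over the normal domain $B$, its minimal polynomial over $\mathrm{Frac}(B)$ has coefficients in $B$; write it as $p(x) = x^n + b_{n-1} x^{n-1} + \cdots + b_0 \in B[x]$. The claim is then $f(D(a)) = \bigcup_{i=0}^{n-1} D(b_i)$, which is manifestly open. The inclusion ``$\subseteq$'' is a nilpotency observation: if every $b_i$ lies in a prime $\mathfrak{p}$, then for any prime $\mathfrak{q}$ of $A$ over $\mathfrak{p}$ the image of $a$ in $A/\mathfrak{q}$ satisfies $\bar a^n = 0$, forcing $a \in \mathfrak{q}$ because $A/\mathfrak{q}$ is a domain. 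The reverse inclusion uses $B[a] \cong B[x]/(p)$, which follows because polynomial division against the monic $p$ shows that $p$ generates the kernel of $B[x] \twoheadrightarrow B[a]$: if some $b_i \notin \mathfrak{p}$, then $\bar p \neq x^n$ in $\kappa(\mathfrak{p})[x]$ and so admits an irreducible factor distinct from $x$, which yields a prime of $B[a]$ over $\mathfrak{p}$ not containing $a$; lying over along the integral extension $B[a] \hookrightarrow A$ then produces the required prime of $A$.

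To upgrade this to universal openness I would carry the same template through any base change $\spec C \to \spec B$, showing $(f \otimes_B C)(D(a \otimes 1)) = \bigcup_i D(c_i)$ with $c_i$ the image of $b_i$ in $C$. The ``$\subseteq$'' direction carries over verbatim because $p$ remains monic in $C[x]$. The ``$\supseteq$'' direction is the main obstacle: one would like to imitate the lying over step along $C[x]/(p) \to A \otimes_B C$, but this map need not be injective and $C$ need not be normal, so the direct translation fails. I would resolve this by passing to the fiber over $\mathfrak{r} \in \spec C$, namely $A \otimes_B \kappa(\mathfrak{r}) = (A \otimes_B \kappa(\mathfrak{p})) \otimes_{\kappa(\mathfrak{p})} \kappa(\mathfrak{r})$ where $\mathfrak{p} := B \cap \mathfrak{r}$. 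The previous paragraph applied to the fiber over $\mathfrak{p}$ in the original morphism produces a prime $\mathfrak{q}_\mathfrak{p}$ of $A \otimes_B \kappa(\mathfrak{p})$ whose residue field contains $a$ as a nonzero element, hence a unit; tensoring further with $\kappa(\mathfrak{r})$ over $\kappa(\mathfrak{p})$ keeps $a$ a unit because a tensor product of fields is a nonzero ring, and any prime of this tensor product pulls back to a prime of $A \otimes_B C$ over $\mathfrak{r}$ that avoids $a \otimes 1$. The fiber-by-fiber reduction is what bypasses both the non-injectivity of $C[x]/(p) \to A \otimes_B C$ and the loss of normality on the base.
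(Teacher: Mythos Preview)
Your proof is correct and shares the paper's core strategy: reduce to an affine integral extension $B\subseteq A$ with $B$ normal and show that $f(D(a))=\bigcup_{i} D(b_i)$, where the $b_i$ are the lower coefficients of a minimal monic relation for $a$ over $B$. Two differences are worth recording. First, the paper only sketches plain openness and does not treat the universal statement at all; your fiber-by-fiber reduction through $\kappa(\mathfrak q)\otimes_{\kappa(\mathfrak p)}\kappa(\mathfrak r)$ is a genuine addition and is the right way to handle the loss of normality and integrality after base change. Second, for the inclusion $\bigcup_i D(b_i)\subseteq f(D(a))$ the paper fixes a single prime $\mathfrak p$ lying over $\mathfrak q$ and runs a coefficient-stripping argument to reach a contradiction from $a\in\mathfrak p$, whereas you pass through the isomorphism $B[a]\cong B[x]/(p)$, factor $\bar p$ over the residue field, and then lift via going-up along $B[a]\hookrightarrow A$. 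Your route is precisely where the normality hypothesis becomes visible: it is what forces the coefficients of the minimal polynomial over $\mathrm{Frac}(B)$ to lie in $B$, and hence what makes $B[a]\cong B[x]/(p)$ hold. The paper's sketch, by contrast, only invokes a minimal-degree monic relation with coefficients in $B$, which exists from integrality alone, and never makes the use of normality explicit.
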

	\begin{proof}
		We sketch the proof that $f$ is open. Let $\spec B$ be an open affine in $X$. Since $f$ is finite it is affine and $f^{-1}(\spec B)=\spec A$. If $f\colon \spec A\ra \spec B$ is open for all $\spec B$ then $f$ is open. So we may assume that $f$ is induced by $f\colon \textnormal{Spec }A\ra \textnormal{Spec }B$ which is dominant and finite and $A,B$ are normal domains. So we must prove the statement for a finite extension of rings $B\subseteq A$. Let $a\in A$ be non-zero. Then we can find $b_0,...,b_{n-1}\in B$ such that 
		\[b_0+b_1a+...+b_{n-1}a^{n-1}+a^n=0\]
		with $n$ minimal. Let $p=T^n+\sum_{i=0}^{n-1}b_iT^i$. We will show that $f(D(a))=\bigcup_{i=0}^{n-1} D(b_i)$. To this end let $\mathfrak{p}\in D(a)$. Then $f(\mathfrak{p})=\mathfrak{p}\cap B$. Since $\mathfrak{p}\in D(a)$ we have that $a\notin \mathfrak{p}$. If $\mathfrak{p}\cap B\notin D(b_i)$ for all $i$ then we have that $b_i\in \mathfrak{p}$ for all $i$. Then 
		\[a^n=-b_ia^{n-1}-...-b_0\in \mathfrak{p}.\]
		So $a^n\in \mathfrak{p}$ which means $a\in \mathfrak{p}$ as $\mathfrak{p}$ is a prime ideal. So $f(\mathfrak{p})\in D(b_i)$ for some $i$. On the other hand, let $\mathfrak{q}\in \bigcup_{i=0}^{n-1}D(b_i)$. First suppose that $a\in B$. Then we have that $a$ satisfies $T-a$ and so by the minimality assumption that $p(T)=T-a$. Then $a\notin \mathfrak{p}$ and $f^{-1}(D_B(a))=D(a)$ here $D_B(a)=\{\mathfrak{q}\in \spec B: a\notin \mathfrak{q}\}$.So we have the result. Therefore we may assume that $n>1$ and $a\notin \mathfrak{q}$. Pick $j$ with $b_j\notin \mathfrak{q}$. Since $f$ is surjective we can find a prime ideal $\mathfrak{p}\subseteq A$ with $\mathfrak{p}\cap B=\mathfrak{q}$. If $a\in \mathfrak{p}$. I claim that $a\notin \mathfrak{p}$. Towards a contradiction suppose that $a\in \mathfrak{p}$. Then we have that
		\[a^n+b_{n-1}a^{n-1}+...+b_0=0\] 
		so that $b_0\in \mathfrak{p}\Rightarrow b_0\in \mathfrak{q} $. Then we have that $a(a^{n-1}+b_{n-1}a^{n-2}+...+b1)\in \mathfrak{q}$. Since $a\notin \mathfrak{q}$ we have $a^{n-1}+b_{n-1}a^{n-2}+...+b1\in \mathfrak{q}\Rightarrow b_1\in \mathfrak{q}$. Continuing on in this way we obtain that that $b_0,...,b_{n-1}\in \mathfrak{q}$. This contradicts our choice that some $b_i\notin\mathfrak{q}$. So $\mathfrak{p}$ is in $D(a)$ and so if $\mathfrak{q}\in \bigcup_{i=0}^{n-1}D(b_i)$ then we have that if $\mathfrak{p}\cap B=\mathfrak{q}$ then there is some $\mathfrak{p}\in \spec A$ with $\mathfrak{p}\cap B=\mathfrak{q}$. Thus $ \bigcup_{i=0}^{n-1}D(b_i)\subseteq f(D(a))$. As we already showed that $\bigcup_{i=0}^{n-1}D(b_i)\supseteq f(D(a))$ we have equality.
	\end{proof}
	
	Given a projective variety $X$ and a surjective endomorphism $f\colon X\ra X$ unless $X$ has some very special geometry, (such as being an abelian variety or a toric variety) it can be difficult to study both $f$ and $X$ itself. One way to study $X$ is to apply the minimal model program to $X$ and thus obtain a simpler model of $X$. This strategy also may be employed to study $f$ in certain situations. Let us begin with the case of $X$ being a Mori-fiber space. In other words there is a fibering type contraction $\pi \colon X\ra Y$ where $\dim Y<\dim X$ and $\rho(Y)=\rho(X)-1$. In this situation, $f$ descends to $Y$ after iterating $f$.
	
	\begin{lemma}[{\cite[Lemma 6.2]{1802.07388}}]\label{lemma:iterationlemma}
		Let $\pi\colon X\ra Y$ be a Mori-fiber space. Suppose that $f\colon X\ra X$ is a surjective endomorphism. Then there is some iterate $f^n\colon X\ra X$ and $g\colon Y\ra Y$ such that
		\[\xymatrix{X\ar[r]^{f^n}\ar[d]_\pi & X\ar[d]^\pi\\ Y\ar[r]_g & Y}\]
		commutes. 
	\end{lemma}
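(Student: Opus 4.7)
The plan is to reduce the factorization statement to a numerical claim about the extremal ray contracted by $\pi$, and then finish by pigeonhole together with the Rigidity Lemma. Let $R \subset \NE(X)$ denote the extremal ray contracted by $\pi$. Since $\pi$ is a Mori fibre space it has connected fibres and $\ker \pi_\ast = \mathbb{R}_{\geq 0} R$, so by the Rigidity Lemma the composition $\pi \circ f^n$ descends through $\pi$ to a morphism $g \colon Y \to Y$ if and only if $\pi \circ f^n$ contracts every fibre of $\pi$ to a point. In terms of $1$-cycle classes this is the condition $(f^n)_\ast R \subseteq R$. Because $f$ is a surjective endomorphism of a projective variety, the proposition earlier in this section says that $f$ is finite; hence $f^\ast$ is a linear automorphism of $N^1(X)$ and its adjoint $f_\ast$ is a linear automorphism of $N_1(X)$, so $(f^n)_\ast R \subseteq R$ is equivalent to $(f^n)_\ast R = R$. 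The problem has therefore been reduced to showing that some positive iterate of $f_\ast$ fixes the line spanned by $R$.

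Next, I would verify that $f_\ast$ preserves $\NE(X)$ bijectively, and in particular permutes its extremal rays. Forward preservation is immediate from effectivity of pushforwards. The reverse direction follows from the standard fact that $f^\ast$ preserves $\Nef(X)$ bijectively (pullback of a nef class by a morphism is nef, and the identity $f_\ast f^\ast = \deg(f) \cdot \iden$ together with the projection formula shows $f_\ast$ also preserves nefness on $N^1(X)$), dualised under the intersection pairing between $N^1(X)$ and $N_1(X)$. Consequently $f_\ast$ acts as a permutation on the set of extremal rays of $\NE(X)$.

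The main obstacle is a finiteness statement: one must place $R$ inside a finite $f_\ast$-invariant subset of extremal rays so that the pigeonhole principle applies. The natural finite set is the collection of $K_X$-negative fibering-type extremal rays; under the singularity hypotheses implicit in the Mori fibre space setup, the Cone Theorem together with boundedness of fibering contractions shows this set is finite. To see that $f_\ast$ preserves this set, I would use that $f$ is finite, so that it maps a general fibre of any fibering contraction to a variety of the same dimension, and combine this with the bijectivity of $f_\ast$ on $\NE(X)$ established above to conclude that $f_\ast$ sends a fibering-type extremal ray to a fibering-type extremal ray. Pigeonhole then supplies $n$ with $(f^n)_\ast R = R$, and the Rigidity Lemma produces the desired $g \colon Y \to Y$ making the square commute.
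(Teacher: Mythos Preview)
The paper does not give its own proof of this lemma; it is stated with a citation to \cite[Lemma~6.2]{1802.07388} and used as a black box. Your outline is the standard argument and almost certainly the one in the cited reference: reduce via the universal property of the contraction (equivalently, the Rigidity Lemma) to $(f^n)_* R = R$; show that $f_*$ is a cone automorphism of $\NE(X)$ and hence permutes extremal rays; then trap $R$ in a finite $f_*$-stable subset and apply pigeonhole.

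One step needs tightening. You choose the finite set to be the $K_X$-negative extremal rays of fibre type, and you correctly flag its finiteness as the crux. But when you argue that $f_*$ preserves this set, you only treat the ``fibre type'' part (a covering family of curves pushes forward under the finite surjection $f$ to another covering family). You do not verify that $f_* R'$ remains $K_X$-negative, and this is not automatic: extremal rays spanned by covering curves need not be $K_X$-negative in general (think of elliptic fibrations on a $K3$ surface, where $K_X$ is trivial and such rays can be infinite in number), so the finiteness you invoke genuinely uses the $K_X$-negative condition and you must check that $f_*$ respects it. The fix is short: for a curve $C$ in a general fibre of the contraction of $R'$ the class is movable, so $R_f \cdot C \geq 0$ for the effective ramification divisor $R_f$, and the ramification formula $K_X = f^* K_X + R_f$ together with the projection formula gives
\[ K_X \cdot f_* C \;=\; f^* K_X \cdot C \;=\; (K_X - R_f)\cdot C \;\leq\; K_X \cdot C \;<\; 0. \]
With this line added your argument is complete.
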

	We see that if $X$ is a Mori-fiber space then, then we can study $f$ up to taking iterates by studying the induced morphism $g\colon Y\ra Y$. As $Y$ is "simpler" than $X$ we hope that $g$ is easier to study then $X$. The philosophy is that a good way to study $f$ is to study $g$ and the behavior of $f$ on the fibers of $\pi$. To study birational contractions we have the following two crucial results.
	\begin{lemma}[{\cite[Lemma 3.6]{1902.06071}}]\label{lem:iterationlemma2}
		Let $X$ be a normal $\QQ$-factorial log canonical projective variety and $f\colon X\ra X$ a surjective endomorphism. Let $R$ be a $K_X$ negative extremal ray and $f_*R=R$. Let $\phi_R\colon X\ra Y$ be the associated extremal contraction. Then there is a morphism $g\colon Y\ra Y$ such that $g\circ \phi_R=\phi_R\circ f$.
	\end{lemma}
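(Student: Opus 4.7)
The plan is to show that the composition $h := \phi_R \circ f \colon X \to Y$ is constant on every fiber of $\phi_R$, and then invoke a rigidity lemma to promote this set-theoretic factorization to a morphism $g \colon Y \to Y$. Note first that by Proposition \ref{prop:surjective=finite}, $f$ is finite, so it does not contract any curve.

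The first step is to show that if $C \subset X$ is an irreducible curve with $[C] \in R$, then $[f(C)] \in R$ as well. Since $f$ is finite, $f_\ast [C] = d\,[f(C)]$ with $d = \deg(f|_C) \geq 1$; the hypothesis $f_\ast R = R$ gives $d\,[f(C)] \in R$, and because $R$ is a ray we conclude $[f(C)] \in R$. Hence $\phi_R$ contracts $f(C)$.

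The second step is to show that $h$ is constant on each fiber $F$ of $\phi_R$. A fiber of a $K_X$-negative extremal contraction from a $\QQ$-factorial log canonical variety is connected, and any two of its points are joined by a chain $C_1, \ldots, C_k$ of irreducible curves whose classes lie in $R$; this is a consequence of the cone and contraction theorems for log canonical pairs (classical in the terminal/klt case, and extended to log canonical pairs by Ambro--Fujino). Applying the first step to each $C_i$, the images $f(C_1), \ldots, f(C_k)$ form a chain of curves all contracted by $\phi_R$, and consecutive members meet because $f$ is a morphism. Therefore $\phi_R$ is constant along this chain, so $\phi_R(f(x_1)) = \phi_R(f(x_2))$ for any $x_1, x_2 \in F$, and $h(F)$ is a single point.

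Finally, since $\phi_R$ is proper with $(\phi_R)_\ast \OO_X = \OO_Y$, the rigidity lemma upgrades the set-theoretic factorization to a morphism: there is a unique $g \colon Y \to Y$ with $g \circ \phi_R = h$, and surjectivity of $g$ is automatic from that of $f$ and $\phi_R$. The main obstacle is the fiber-structure claim invoked in the second step, which is routine in the smooth or klt case but requires the full strength of the log canonical cone theorem in our generality; an alternative route, which avoids describing fibers explicitly, is to take the Stein factorization $h = h_2 \circ h_1$ and argue via the universal property of $\phi_R$ as the contraction of the extremal face $R$ of $\NE(X)$, showing that $h_1$ factors through $\phi_R$.
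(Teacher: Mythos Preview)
The paper does not supply its own proof of this lemma: it is quoted verbatim from \cite[Lemma 3.6]{1902.06071} and used as a black box. So there is no proof in the paper to compare against, and I can only comment on the correctness of your argument.

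Your argument is sound. A couple of remarks on the second step, where you seem to feel the most uncertainty. The ``fiber-structure claim'' you flag as the main obstacle is actually quite mild once the contraction $\phi_R$ is in hand. The log canonical contraction theorem (Ambro--Fujino) is indeed needed, but only to guarantee that $\phi_R$ exists, has $(\phi_R)_*\OO_X=\OO_Y$, and satisfies the defining property that an irreducible curve $C\subset X$ is contracted by $\phi_R$ if and only if $[C]\in R$. Given that, every curve contained in a fiber of $\phi_R$ automatically has class in $R$; no further structural input is required. The remaining ingredient --- that any two closed points of a connected projective scheme over an algebraically closed field are joined by a chain of irreducible curves --- is elementary and does not depend on the singularity class. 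So your worry that the second step ``requires the full strength of the log canonical cone theorem'' beyond the mere existence of $\phi_R$ is unwarranted.

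Your alternative route through the Stein factorization of $h=\phi_R\circ f$ is in fact the more common presentation: one checks that every curve contracted by $\phi_R$ is contracted by $h$ (exactly your first step), hence $\textnormal{NE}(X/Y)\subseteq \textnormal{NE}(X/Y')$ where $h_1\colon X\to Y'$ is the first factor of the Stein factorization; then the universal property of the extremal contraction gives a morphism $Y\to Y'$, which composed with the finite part $Y'\to Y$ yields $g$. Either way, the final factorization through $\phi_R$ is immediate from $(\phi_R)_*\OO_X=\OO_Y$ once $h$ is known to be constant on fibers.
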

	
In the above lemma, log canonical is a generalization of a terminal singularity. See \cite{MM} for more on these definitions. This gives us good control over a birational morphism. However, we also need to know how to control flips. This is provided by the following crucial result of Zhang with proof suggested by N. Nakayama
	
	\begin{lemma}[Morphism extension property: lemma 6.6 of \cite{MR4074056}]\label{lem:extensionlem2}
		Let $X$ be a normal projective variety with at worst lc singularities. Suppose that $f\colon X\ra X$ is a surjective endomorphism. Let $R$ be a $K_X$ negative extremal ray and $\phi_R$ the associated contraction. If $\phi_R\colon X\ra Y$ is of flipping type and $\psi\colon X\ra X^+$ is the associated flip, then the induced rational mapping $f^+\colon X^+\dashrightarrow X^+$ extends to a morphism $f^+\colon X^+\ra X^+$. Furthermore, both $f$ and $f^+$ descend to the same morphism on $Y$. 
	\end{lemma}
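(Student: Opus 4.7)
The plan is to combine Lemma \ref{lem:iterationlemma2} with the relative Proj presentation of the flipped model $X^+\to Y$. Implicit in the statement (and required for $f^+$ to be a well-defined rational map to begin with) is that $f_*R=R$; this can be arranged by replacing $f$ by an iterate. Under this assumption, Lemma \ref{lem:iterationlemma2} supplies a finite surjective morphism $g\colon Y\to Y$ with $g\circ \phi_R=\phi_R\circ f$. Since $\phi_R$ and $\phi^+$ are both small birational morphisms, the flip $\psi\colon X\dashrightarrow X^+$ is an isomorphism in codimension one, and so $f^+:=\psi\circ f\circ \psi^{-1}\colon X^+\dashrightarrow X^+$ is a well-defined rational map, regular on the preimage in $X^+$ of the non-flipping locus of $Y$. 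On this locus the identity $\phi^+\circ f^+=g\circ\phi^+$ holds by construction, so the descent assertion of the lemma will follow automatically once $f^+$ is extended to a morphism.

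To extend $f^+$, I would exploit that $K_{X^+}$ is $\phi^+$-ample. For sufficiently divisible $m>0$ the sheaf
\[
\mathcal{A}:=\bigoplus_{k\geq 0}\phi^+_*\mathcal{O}_{X^+}(km K_{X^+})
\]
is a finitely generated coherent sheaf of graded $\mathcal{O}_Y$-algebras with $X^+\cong\textnormal{Proj}_Y\mathcal{A}$. It therefore suffices to construct a morphism of graded $\mathcal{O}_Y$-algebras $g^{-1}\mathcal{A}\to\mathcal{A}$ that recovers $f^+$ on the open locus where the latter is defined; taking relative Proj over $g$ yields a morphism $X^+\to X^+$ lying over $g$, which must then agree with $f^+$ on a dense open subset and so provides the desired extension.

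The construction of this algebra map is the heart of the argument and where I expect the main obstacle. The key point is that each summand $\phi^+_*\mathcal{O}_{X^+}(km K_{X^+})$ is a reflexive sheaf on the normal variety $Y$ (being the pushforward of a line bundle along a small birational morphism), and is therefore determined by its restriction to any open subset $Y^\circ\subset Y$ whose complement has codimension at least two. Taking $Y^\circ$ to be the non-flipping locus, we have $\phi_R^{-1}(Y^\circ)\cong Y^\circ\cong (\phi^+)^{-1}(Y^\circ)$, so on $Y^\circ$ the sheaves $\phi_{R*}\mathcal{O}_X(km K_X)$ and $\phi^+_*\mathcal{O}_{X^+}(km K_{X^+})$ coincide with the same reflexive sheaf on $Y^\circ$, which inherits a natural pullback endomorphism from $f$ (using $g\circ\phi_R=\phi_R\circ f$). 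Extending this codimension-one map by reflexivity produces a morphism $g^{-1}\mathcal{A}\to\mathcal{A}$ on all of $Y$.

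The subtlety I would watch for is that $g$ need not be flat, so a naive base change of $\phi^+$ along $g$ need not recover $X^+$. Working throughout with reflexive sheaves on the normal base $Y$ bypasses this, since a map between reflexive sheaves on a normal variety is determined by its codimension-one restriction. Once the algebra morphism is in hand, applying relative Proj packages together the extension of $f^+$ and the identity $\phi^+\circ f^+=g\circ\phi^+$, completing both conclusions of the lemma.
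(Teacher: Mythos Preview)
The paper does not prove this lemma; it is quoted from \cite{MR4074056} with the argument credited to Nakayama, so there is no in-text proof to compare against. Your relative-Proj strategy is in fact the standard route and essentially Nakayama's, so the overall plan is sound.

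Where your sketch falls short is at the phrase ``inherits a natural pullback endomorphism from $f$.'' This hides the one substantive input: the ramification formula $K_X\sim f^*K_X+R_f$ with $R_f\geq 0$, which furnishes the canonical map $f^*\mathcal{O}_X(mK_X)\to \mathcal{O}_X(mK_X)$ (the $m$-th power of the sheafified differential $f^*\omega_X\to\omega_X$). Composing with the base-change morphism $g^*\phi_{R*}\mathcal{O}_X(mK_X)\to\phi_{R*}f^*\mathcal{O}_X(mK_X)$ then produces the graded $\mathcal{O}_Y$-algebra map $g^*\mathcal{A}\to\mathcal{A}$ directly on all of $Y$; no reflexive extension from $Y^\circ$ is needed, and your $g^{-1}\mathcal{A}$ should read $g^*\mathcal{A}$ for Proj to make sense. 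Without the sign of the ramification divisor there is no reason the arrow points in the direction you need, and ``natural pullback'' alone does not give an endomorphism of $\mathcal{A}_k$. A second, smaller gap: a graded algebra homomorphism induces a morphism only on the complement of $V_+$ of the image of the irrelevant ideal, so you still owe a line explaining why this open set is all of $X^+$. This is where the $\phi^+$-ampleness of $K_{X^+}$ enters a second time---for instance by extending the pulled-back relative embedding sections across the codimension-two indeterminacy locus via normality and checking they have no common zero, or equivalently by resolving the graph of $f^+$ and applying the negativity lemma to $p^*K_{X^+}-q^*K_{X^+}$.
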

Thus if $\overline{\textnormal{NE}}(X)_\RR$ is a finitely generated cone and $R$ is an extremal ray generating an extremal contraction $\phi_R$ then we have that the linear mapping preserves $\overline{\textnormal{NE}}(X)$. Therefore for some $n>0$ we have that $f^{\circ n}_*(R)=R$ and so we may obtain a diagram
    \[\xymatrix{X\ar[r]^{f^{\circ n}}\ar[d]_{\phi_R} & X\ar[d]^{\phi_R}\\
		Y\ar[r]_g & Y}\]
    So to study $f$ we wish to study $g$ and proceed by induction. This breaks the dynamical study of $f$ into three essential ingredients.
	
	\begin{enumerate}\label{theplan}
		\item Completing the minimal model program. In particular  the termination of flips. 
		\item The study of $f$ under birational extremal contractions.
		\item The study of the dynamics of Mori-fiber spaces.
	\end{enumerate}
	
	The above program is was is designed to study varieties with finitely generated Nef cones. However it is possible to extend these ideas to a different setting where non-finitely generated Nef cones are allowed. These are varieties with an int-amplified endomorphisms. Note that varieties with an int-amplified endomorphism does not contain all varieties with finitely generated rational Nef cones; we will exhibit examples of varieties with finitely generated rational Nef cones that do not admit an int-amplified endomorphism.

	\begin{subsection}{Int-amplified endomorphisms}\label{subsec:intamplified}
		
		\begin{definition}[Int amplified, amplified and polarized endomorphisms]
			Let $X$ be a projective variety defined over $\Qb$ and $f\colon X\ra X$ a surjective endomorphism. We say that $f$ is a \emph{polarized} endomorphism if there is some ample $\QQ$-Cartier divisor $L$ on $X$ such that $f^*L\Lin qL$ for some $q>1$. We say that $f$ is \emph{amplified} if there is a $\QQ$-Cartier divisor $L$ such that $f^*L-L$ is ample. We say $f$ is an  \emph{int-amplified} endomorphism if there is some ample $\QQ$-Cartier divisor $L$ with $f^*L-L$ being ample.
		\end{definition}
		
		Int amplified endomorphisms can be characterized in terms of their eigenvalues being large.

		\begin{proposition}[Eigenvalues determine int-amplified endomorphisms: Theorem 3.3 \cite{MR4074056}]\label{thm:intampeigen}
			Let $X$ be a projective variety defined over $\Qb$. Let $f\colon X\ra X$ be a surjective endomorphism. Then $f$ is int amplified if and only if  $f^*\colon N^1(X)_\RR\ra N^1(X)_\RR$ has all eigenvalues of modulus strictly larger then 1.
		\end{proposition}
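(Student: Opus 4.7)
The plan is to prove the equivalence by an iteration argument that leverages the identity $(f^* - \iden)\sum_{k=0}^{N}(f^k)^* = (f^{N+1})^* - \iden$ together with the fact (from Proposition \ref{prop:surjective=finite}) that $f$ is finite, so that $f^*$ both preserves and reflects ampleness: preservation is clear, and reflection follows from Nakai--Moishezon applied to $(f^*D)^{\dim Y'} \cdot Y' = \deg(f|_{Y'}) \cdot D^{\dim Y} \cdot Y$ for $Y' \subseteq f^{-1}(Y)$.

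For the direction ``all eigenvalues have modulus $>1$ $\Rightarrow$ $f$ is int-amplified'', I would pick any ample class $A$ and form
\[ L_N \;=\; \sum_{k=0}^{N} (f^k)^* A. \]
Each summand is ample (finite pullback preserves ampleness), hence so is $L_N$. A telescoping computation yields $(f^* - \iden)L_N = (f^{N+1})^* A - A$. Rewriting the right-hand side as $(f^{N+1})^*\bigl(A - (f^*)^{-(N+1)} A\bigr)$ and using that the hypothesis $|\lambda_i| > 1$ makes the operator $(f^*)^{-n}$ contract to $0$ on $N^1(X)_\RR$, the class $A - (f^*)^{-(N+1)}A$ converges to the ample class $A$ and hence lies in the open cone $\Amp(X)$ for $N$ large. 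Pulling back by the finite morphism $f^{N+1}$ preserves ampleness, so $(f^*-\iden)L_N$ is ample and $f$ is int-amplified.

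For the converse, assume $f^*L - L$ is ample with $L$ ample, and iterate: by induction,
\[ (f^n)^*L - L \;=\; \sum_{k=0}^{n-1} (f^k)^*(f^*L - L) \]
is a sum of ample classes and hence ample for every $n \geq 1$. Reflecting ampleness through $(f^n)^*$ gives that $L - (f^*)^{-n} L$ is ample for every $n \geq 1$. Suppose for contradiction that some eigenvalue $\lambda$ of $f^*$ satisfies $|\lambda| \leq 1$. Openness of the ample cone allows me to perturb $L$ while keeping $(f^*-\iden)L$ ample, so I may arrange that $L$ has nonzero component in the generalized eigenspace of $\lambda$ (over $\CC$, taking real parts for complex pairs). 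Then $(f^*)^{-n}L$ fails to tend to $0$ in that eigendirection: when $|\lambda| < 1$ the $\lambda$-component of $(f^*)^{-n}L$ grows like $|\lambda|^{-n}$ and eventually escapes any bounded region, contradicting $(f^*)^{-n}L \in L - \overline{\Amp}$; when $|\lambda| = 1$ I would pass to the dual action of $f_*$ on $N_1(X)_\RR$, use the Perron--Frobenius-type theorem applied to the nef cone of curves to produce an $f_*$-eigenvector nef curve class $\eta$, and pair $(L - (f^*)^{-n}L) \cdot \eta > 0$ to get an inequality that, taken in $n$-averages over an orbit on the unit circle, forces equality and contradicts strict ampleness.

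The main obstacle is the borderline case $|\lambda| = 1$, since Jordan blocks can create polynomial growth in $(f^*)^{-n}L$ that, considered by itself, need not immediately escape the region $L - \overline{\Amp}$. Here the pairing argument with an $f_*$-invariant nef curve class is the delicate point: one must confirm that the Perron--Frobenius eigenvector produced on the dual side really sees the eigendirection of interest, and that the resulting scalar recurrence on $\RR$ rules out bounded non-negative orbits. Modulo this spectral input, the linear-algebraic skeleton of the proof is exactly the telescoping identity used in the easy direction, applied in reverse.
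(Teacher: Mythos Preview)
The paper does not supply its own proof of this proposition; it is quoted verbatim from Meng \cite{MR4074056}. Your forward implication is correct and clean: the telescoping identity, the contraction $(f^*)^{-n}\to 0$ coming from $\min_i|\lambda_i|>1$, the openness of $\Amp(X)$, and preservation of ampleness under finite pullback combine exactly as you describe to exhibit an ample $L_N$ with $f^*L_N-L_N$ ample.

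The converse, however, has a genuine gap in your treatment of the case $|\lambda|<1$. You claim that the blowing-up $\lambda$-component of $(f^*)^{-n}L$ ``eventually escapes any bounded region, contradicting $(f^*)^{-n}L\in L-\overline{\Amp}$''. But $L-\overline{\Amp}(X)$ is \emph{not} bounded: for any ample $H$ and every $t>0$ one has $L-tH\in L-\Amp(X)$, so this set contains a translated open half-cone and is unbounded. Mere growth of one coordinate of $(f^*)^{-n}L$ cannot force it out of $L-\overline{\Amp}(X)$. The case split $|\lambda|<1$ versus $|\lambda|=1$ is in fact artificial; the Perron--Frobenius idea you reserve for the borderline case is what does all the work, uniformly. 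Both $f_*$ and $(f_*)^{-1}=\tfrac{1}{\deg f}\,f^*$ preserve the closed cone of curves $\NE(X)\subseteq N_1(X)_\RR$, so Birkhoff's Perron--Frobenius theorem applied to $(f_*)^{-1}$ produces a nonzero $\eta\in\NE(X)$ with $f_*\eta=\mu\eta$, where $\mu=\min_i|\lambda_i|>0$ is the reciprocal of the spectral radius of $(f_*)^{-1}$. Pairing against $\eta$ and using the projection formula gives
\[
0<(f^*L-L)\cdot\eta=L\cdot f_*\eta-L\cdot\eta=(\mu-1)(L\cdot\eta),
\]
and $L\cdot\eta>0$ since $L$ is ample, whence $\mu>1$. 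This single pairing disposes of both $|\lambda|<1$ and $|\lambda|=1$ at once; no perturbation of $L$ and no separate boundedness argument is needed.
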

		
		From \ref{thm:intampeigen} we see that the composition of int-amplified endomorphisms are int-amplified. We naturally obtain a sub-monoid of all surjective endomorphisms. 
		
		\begin{definition}[The monoid of surjective endomorphisms]\label{def:IntSur}
			Let $X$ be a projective variety defined over $\Qb$. We let $\Send(X)$ be the monoid of surjective endomorphisms of $X$. We let $\Iamp(X)$ be the collection of int-amplified endomorphisms of $X$.
		\end{definition}
		
		The idea behind int-amplified endomorphisms is that if $\Iamp(X)\neq \emptyset$ then we have a sub-monoid $\Iamp(X)\subseteq \Send(X)$ that can be used to study $\Send(X)$. We will see that the existence of $f\in \Iamp(X)$ has consequences for the birational geometry of $X$. We now summarize the main properties of varieties with an int-amplified endomorphism.
		
		\begin{theorem}[Lemma 5.2 \cite{MR4070310}]\label{thm:propertiesofintam}
			Let $X,Y$ be a normal projective varieties. Let $f\colon X\ra X$ and $g\colon Y\ra Y$ be surjective endomorphisms.
			
			\begin{enumerate}
				\item If $\pi\colon X\ra Y$ is a surjective endomorphism and $f$ is int-amplified with $g\circ\pi=\pi\circ f$ then $g$ is int-amplified.
				\item  If $\dim X=\dim Y$ and $\pi\colon X\dashrightarrow Y$ is a dominant rational map with $g\circ\pi=\pi\circ f$ then $g$ is int-amplified.
				\item If $X$ is $\QQ$-factorial and $f$ is int amplified then $-K_X\Num E$ where $E$ is an effective $\QQ$- divisor. In particular if $\Alb(X)=0$ then $\kappa(-K_X)\geq 0$. 
			\end{enumerate}
		\end{theorem}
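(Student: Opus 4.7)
The plan is to reduce everything to the spectral characterization of int-amplified endomorphisms in Proposition \ref{thm:intampeigen} together with the ramification formula. I handle the three parts in order; the main obstacle lies in item (3).

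For items (1) and (2), the commutation $g\circ\pi = \pi\circ f$ induces an intertwiner $\pi^\ast\circ g^\ast = f^\ast\circ\pi^\ast$ on Néron--Severi groups. When $\pi$ is a surjective morphism, $\pi^\ast\colon N^1(Y)_\RR \hookrightarrow N^1(X)_\RR$ is injective: if $\pi^\ast L \Num 0$ then for any curve $C\subset Y$, picking an irreducible curve $C'\subset \pi^{-1}(C)$ that maps finitely onto $C$ and applying the projection formula gives $L\cdot C = 0$. When $\pi$ is a dominant rational map with $\dim X = \dim Y$, I resolve indeterminacy via $W \xra{p} X$ birational and $W \xra{q} Y$ generically finite, and define $\pi^\ast := p_\ast q^\ast$, still injective by the same kind of projection-formula argument. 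In both cases $\pi^\ast N^1(Y)_\RR$ is an $f^\ast$-invariant subspace on which $f^\ast$ is conjugate (via $\pi^\ast$) to $g^\ast$, so the spectrum of $g^\ast$ is a sub-multiset of that of $f^\ast$; every eigenvalue thus has modulus $>1$, and Proposition \ref{thm:intampeigen} applied in reverse gives $g\in\Iamp(Y)$.

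For item (3), the ramification formula for the finite surjective self-morphism $f\colon X\ra X$ on the $\QQ$-factorial normal $X$ reads $K_X \Num f^\ast K_X + R_f$ for some effective $\QQ$-Weil divisor $R_f$, equivalently $(f^\ast - \iden)(-K_X) \Num R_f$ in $N^1(X)_\RR$. By Proposition \ref{thm:intampeigen} every eigenvalue of $f^\ast$ has modulus $>1$, so $f^\ast - \iden$ is invertible; moreover $(f^\ast)^{-1} = \deg(f)^{-1}f_\ast$ (from $f_\ast f^\ast = \deg(f)\cdot\iden$ on $N^1$) has spectral radius $<1$, and the Neumann series converges to
\[ -K_X \;\Num\; (f^\ast - \iden)^{-1}R_f \;=\; \sum_{n\geq 1}\deg(f)^{-n}(f_\ast)^n R_f. \]
Each partial sum on the right is an effective $\QQ$-divisor since finite pushforward preserves effectivity, so the limit shows $-K_X$ is pseudo-effective. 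The real work is promoting ``limit of effectives'' to ``numerically equivalent to a single effective $\QQ$-divisor''; my plan is to use a telescoping identity of partial sums to write $-K_X$ as a finite positive combination of $(f_\ast)^n R_f$ terms up to an error class of the form $(f^\ast)^{-N}(-K_X)$, and then exploit the rationality of $f^\ast$ on $N^1(X)_\QQ$ to extract a $\QQ$-effective representative.

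The in-particular clause follows because $\Alb(X) = 0$ forces $\Pic^0(X) = 0$, so on $\Pic(X)_\QQ$ numerical and linear equivalence agree up to torsion in $\Pic^\tau(X)/\Pic^0(X)$. Replacing $-K_X$ by a sufficiently divisible multiple kills this torsion, yielding $-mK_X \Lin mE$ for some integer $m\geq 1$ with $E$ effective, whence $\kappa(-K_X)\geq 0$. Parts (1), (2), and the in-particular statement are essentially formal consequences of Proposition \ref{thm:intampeigen} and standard facts about $\Pic^0$; the genuine technical hurdle is producing the effective representative in item (3).
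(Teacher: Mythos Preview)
The paper does not prove this theorem itself; it is cited from \cite{MR4070310} (as their Lemma~5.2) and stated without argument. There is therefore no in-paper proof to compare yours against.

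On the substance of your proposal: parts (1) and (2) are correct and follow the standard route via Proposition~\ref{thm:intampeigen}. The only point that deserves a sentence in (2) is that the intertwining $\pi^\ast g^\ast = f^\ast \pi^\ast$ on $N^1$ really survives for the rational-map pullback $\pi^\ast = p_\ast q^\ast$; it does, because both $f$ and $g$ are honest morphisms, but this is not entirely automatic. For (3) your ramification--Neumann-series argument cleanly shows that $-K_X$ is pseudo-effective, and you correctly isolate the remaining issue: producing an actual effective $\QQ$-divisor in the numerical class of $-K_X$. However, the fix you sketch does not close this gap. Rationality of $f^\ast$ on $N^1(X)_\QQ$ only tells you that $(f^\ast-\iden)^{-1}[R_f]$ is a rational class, which you already know since it equals $[-K_X]$; it does not manufacture an effective representative. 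The telescoping identity $-K_X = \deg(f)^{-n}(f_\ast)^n(-K_X) + E_n$ with $E_n$ effective still leaves the error term $\deg(f)^{-n}(f_\ast)^n(-K_X)$, whose numerical class tends to zero but which is only known to be pseudo-effective, so no finite $n$ finishes the argument. As written you have established pseudo-effectivity of $-K_X$ but not the effectivity asserted in the statement; completing (3) requires an additional idea beyond what you have outlined.
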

		
		\begin{definition}[The equivariant MMP:Meng-Zhang in \cite{MR4117085}]\label{def:equivariantMMP}
			
			Consider a sequence	of dominant rational maps
			
			\begin{align}\label{eq:equivraiantMMP}
				X_1\dashrightarrow X_2\dashrightarrow X_3\dots\dashrightarrow X_r
			\end{align}	
			
			such that each $X_i$ is a normal projective variety. Let $f=f_1\colon X_1\ra X_1$ be a surjective endomorphism. We say that \ref{eq:equivraiantMMP} is $f$-equivariant if there are surjective endomorphisms $f_i\colon X_i\ra X_i$ such that $g_i\circ f_{i+1}=f_i\circ g_i$ for all $i$, where $g_i\colon X_i\dashrightarrow X_{i+1}$ is the dominant rational mapping of \ref{eq:equivraiantMMP}.

		\end{definition}
		
		\begin{theorem}[The equivariant MMP of Meng-Zhang: Theorem \cite{MR4070310}]\label{thm:MengZhangMMP}
			Let $X$ be a $\QQ$-factorial projective variety	defined over $\Qb$ with at worst terminal singularities admitting an int-amplified endomorphism.
			
			\begin{enumerate}
				\item There are only finitely many $K_X$ negative extremal rays of $X$. Moreover if $f\colon X\ra X$ is a surjective endomorphism then there is some $n\in \ZZ_{>0}$ such that $f^n_*\colon \NE(X)_\RR\ra \NE(X)_\RR$ fixes every $K_X$ negative extremal ray.Let $R$ be any extremal $K_X$ negative extremal ray with contraction $\phi_R\colon X\ra Y_R$. Then there is a surjective endomorphism $g_R\colon Y_R\ra Y_R$ such that $g_R\circ \phi_R=\phi_R\circ f^n$. Moreover if $R$ is a flip and $\psi_R^+$ is the associated birational mapping $X\ra X_R^+$ then the induced rational mapping $f_R^+\colon X_R^+ \dashleftarrow X_R^+$ extends to a morphism $f_R^+\colon X_R^+\ra X_R^+$.
				
				\item Then for any surjective morphism $f\colon X\ra X$  there is some $n$ and a $f^n$ equivariant MMP for $f^n$ g given by \[X_1\dashrightarrow X_2\dashrightarrow X_3\dots\dashrightarrow X_r.\]
				Let $g_i\colon X_i\dashrightarrow X_{i+1}$. Then we have that.
				
				\begin{enumerate}
					\item Each $g_i$ is a contraction of a $K_X$ negative extremal ray.
					\item $X_r$ is a $Q$-Abelian variety. Note that $X_r$ might be a point. In fact there is there is a surjective endomorphism $h\colon A\ra X_r$ where $h$ is a finite and $A$ is an abelian variety. Moreover there is a surjective endomorphism $w\colon A\ra A$ such that $w\circ h=f_r\circ w$. The existence of $h$ is the definition of $Q$-Abelian, the theorem provides that the morphism $g_r$ commutes with a morphism of the covering abelian variety. In fact this holds for any surjective endomorphism of a $Q$-Abelian variety.
				\end{enumerate}	
				
				\item Let $f\colon X\ra X$ be a surjective endomorphism. Let \[X_1\dashrightarrow X_2\dashrightarrow X_3\dots\dashrightarrow X_r\] be any MMP where the $g_i\colon X_i\ra X_{i+1}$ are divisorial or fibering contractions. Then there is some $n$ such that there are surjective endomorphisms $f_i\colon X_{i}\ra X_i$ making the MMP $f$-equivariant.

			\end{enumerate}

		\end{theorem}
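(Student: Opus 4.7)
The plan is to combine the eigenvalue criterion for int-amplified endomorphisms (Proposition \ref{thm:intampeigen}) with the descent lemmas for extremal contractions and flips (Lemmas \ref{lem:iterationlemma2} and \ref{lem:extensionlem2}) and the structural fact that $-K_X$ is pseudo-effective (Theorem \ref{thm:propertiesofintam}(3)), in order to run a fully $f$-equivariant MMP and to identify its terminal object as $Q$-abelian.

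For part (1), finiteness of $K_X$-negative extremal rays follows from pseudo-effectivity of $-K_X$: writing $-K_X \Num E$ with $E$ an effective $\QQ$-divisor, every $K_X$-negative extremal ray lies in the half-space $\{E\cdot z < 0\}$, and the usual Kawamata--Shokurov cone theorem applied to the klt pair $(X,0)$ (terminal implies klt) truncated to this region is rational polyhedral. The pushforward $f_*$ preserves $\NE(X)$; using the ramification formula $f^*K_X = K_X - R_f$ with $R_f$ effective, one checks that $f_*$ maps $K_X$-negative extremal rays to $K_X$-negative extremal rays, hence permutes this finite set, so some iterate $f^n_*$ fixes each ray. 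Lemmas \ref{lem:iterationlemma2} and \ref{lem:extensionlem2} then provide the descent $g_R$ of $f^n$ through $\phi_R$ and, in the flipping case, the extension of the induced rational map on $X_R^+$ to a morphism.

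For part (2) one iterates: at each step $X_i \dra X_{i+1}$ is the contraction or flip of a fixed $K_{X_i}$-negative extremal ray, and by Theorem \ref{thm:propertiesofintam}(1),(2) the descended endomorphism $f_{i+1}$ is again int-amplified, so the hypotheses propagate down the tower. Termination follows from termination of $K_X$-flips in the terminal case together with the drop of Picard rank under divisorial contractions and of dimension under fibering ones, yielding a final $X_r$ with $K_{X_r}$ nef. Pseudo-effectivity of $-K_{X_r}$ then forces $K_{X_r} \Num 0$, and the structure theorem of Nakayama--Zhang for varieties with numerically trivial canonical class carrying an int-amplified endomorphism identifies $X_r$ as $Q$-abelian; the equivariance of the cover $h\colon A \to X_r$ by an abelian variety is produced by applying Theorem \ref{thm:propertiesofintam}(2) to the Stein factorization of the composition $A \to X_r \xrightarrow{f_r} X_r$ and lifting via the universal property of the Albanese.

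Part (3) is an easier variant of the same machinery: since divisorial and fibering contractions never need the flip extension Lemma \ref{lem:extensionlem2}, after passing to an iterate so that $f^n_*$ fixes each of the finitely many extremal rays appearing in the given MMP, Lemma \ref{lem:iterationlemma2} alone propagates a compatible $f_i$ down each step. The main obstacle is the termination and identification step in part (2): it leans on termination of $K_X$-MMP in the terminal case and on the classification of $\QQ$-Gorenstein terminal varieties with $K \Num 0$ and an int-amplified endomorphism as $Q$-abelian. Everything else is bookkeeping with the lemmas stated above.
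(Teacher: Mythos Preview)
The paper does not give its own proof of this theorem: it is stated as a quotation of Meng--Zhang \cite{MR4070310} (and, implicitly, of the related works \cite{MR4074056} and \cite{MR4117085}), with no accompanying \texttt{proof} environment. So there is nothing in the paper to compare your proposal against; the author treats this result as an input, not something to be established here.

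That said, a remark on your sketch. Your outline correctly identifies the moving parts the paper itself advertises---Lemmas \ref{lem:iterationlemma2} and \ref{lem:extensionlem2} for descent through contractions and flips, Theorem \ref{thm:propertiesofintam} for propagating the int-amplified hypothesis and for pseudo-effectivity of $-K_X$---and the permutation argument on the finite set of $K_X$-negative rays is the standard one. However, two of your steps are where the genuine content of \cite{MR4070310} lies, and your sketch passes over them too quickly. First, ``termination of $K_X$-flips in the terminal case'' is not a theorem in arbitrary dimension; Meng--Zhang do not simply invoke it but instead exploit the int-amplified structure to produce a specific terminating MMP. Second, deducing finiteness of the $K_X$-negative extremal rays from $-K_X\Num E$ with $E$ effective is not as direct as you suggest: the cone theorem gives local rational polyhedrality in the $K_X$-negative region, but global finiteness requires an additional argument (again supplied in \cite{MR4070310}). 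So while your proposal is a reasonable roadmap, the two points you label as routine are exactly the places where the cited paper does the real work.
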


	\end{subsection}

	\begin{subsection}{Dynamical degrees}\label{subsec:Dynamical degrees}
		To effectively study a surjective endomorphism of $X$ we wish to assign a numerical notion of complexity of $f$ under iteration. Our such notion will be the following. 				
		
		\begin{definition}[The dynamical degree.]\label{def:dynamicaldegree}
			Let $X$ be a projective variety defined over $\Qb$. Let $f\colon X\ra X$ be a dominant morphism. The first dynamical degree of $f$ is defined to be
			\[\lambda_1(f)=\limsup_{n\ra \infty}\rho((f^{\circ n})^*\colon N^1(X)_\RR\ra N^1(X)_\RR) ).\]
			Here $\rho((f^{\circ n})^*)$ is the spectral radius of the pull back action on $N^1(X)_\RR$. 		
		\end{definition}
		
		The dynamical degree can also be defined for arbitrary dominant rational maps provided that $X$ is normal, this is to make sure that the pull back maps behave as expected. This definition is often not used in practice. We have the following.
		
		\begin{proposition}[Properties of the dynamical degree: section 1 of \cite{MR3189467} and corollary 18 in \cite{MR4080251}]\label{prop:dyndegproperties}
			Let $X$ be a projective variety defined over $\Qb$. Let $f\colon X\ra X$ be a dominant morphism. Then we have the following.
			
			\begin{enumerate}
				
				\item Let $H$ be an ample divisor. Then \[\lambda_1(f)=\lim_{n\ra \infty}((f^n)^*H\cdot H^{\textnormal{dim}X-1})^{\frac{1}{n}}.\]
				\item $\lambda_1(f)=\rho(f^*)$. That is, the dynamical degree of $f$ is the spectral radius of the action of $f^*$ on $N^1(X)_\RR.$
			\end{enumerate}
		\end{proposition}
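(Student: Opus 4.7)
The strategy is to establish (2) by functoriality and then deduce (1) via a comparability between the intersection-theoretic quantity and any chosen norm on $N^1(X)_\RR$.

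For (2), since $f$ is an everywhere-defined morphism, pullback of divisor classes is strictly functorial, so $(f^{\circ n})^* = (f^*)^n$ as endomorphisms of $N^1(X)_\RR$. Hence $\rho((f^{\circ n})^*) = \rho(f^*)^n$, and the quantity in Definition~\ref{def:dynamicaldegree} (interpreted with the standard $n$-th root, as is the convention throughout the literature) equals $\rho(f^*)$, proving (2).

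For (1), set $v_n := (f^{\circ n})^* H$; each $v_n$ lies in $\Nef(X)$ since the pullback of an ample class by a morphism is nef. The key step is the following comparability lemma: for any norm $\|\cdot\|$ on $N^1(X)_\RR$ there exist constants $0 < c \le C$ such that
\[
c\,\|D\| \;\le\; D \cdot H^{\dim X - 1} \;\le\; C\,\|D\| \quad \text{for all } D \in \Nef(X).
\]
The upper bound is automatic since $D \mapsto D \cdot H^{\dim X - 1}$ is a linear functional. The lower bound, after a compactness argument on the cross-section of $\Nef(X)$ with the unit sphere, reduces to strict positivity of this functional on every nonzero nef class, which I would extract from the Khovanskii--Teissier inequalities, or equivalently from the Boucksom--Demailly--P\u{a}un--Peternell theorem that $H^{\dim X - 1}$ lies in the interior of the movable cone of curves (and hence pairs strictly positively with every nonzero nef class).

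Granted the lemma, $(v_n \cdot H^{\dim X - 1})^{1/n}$ and $\|v_n\|^{1/n}$ have a common limit. Because $f^*$ preserves the proper convex cone $\Nef(X)$ and sends the interior point $H$ to another interior point, a Perron--Frobenius-type result for cone-preserving operators identifies $\lim_{n\to\infty} \|(f^*)^n H\|^{1/n}$ with $\rho(f^*)$. Combined with (2), this yields (1). The main obstacle in the plan is the strict positivity $D \cdot H^{\dim X - 1} > 0$ for nonzero nef $D$; this is the only step invoking genuine positivity from birational geometry rather than formal linear algebra, and everything else reduces to spectral theory on a finite-dimensional cone.
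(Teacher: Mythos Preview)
The paper does not supply its own proof of this proposition; the result is stated with attribution to the cited references and no argument follows. Your direct proof is sound. Part~(2) is immediate from functoriality $(f^{\circ n})^*=(f^*)^n$ once the missing $n$-th root is restored in Definition~\ref{def:dynamicaldegree}, as you correctly note. For part~(1), the comparability $c\|D\|\le D\cdot H^{\dim X-1}\le C\|D\|$ on $\Nef(X)$ is exactly the right bridge, and the one substantive ingredient is the strict positivity $D\cdot H^{\dim X-1}>0$ for every nonzero nef class $D$, equivalently that the curve class $[H^{\dim X-1}]$ lies in the interior of $\NE(X)$; this is classical and can be obtained by cutting down to a surface with general members of $|mH|$ and invoking the Hodge index theorem, or via BDPP as you indicate. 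Your Perron--Frobenius step is also correct, and in fact can be sidestepped: since $H$ is an interior point of the full-dimensional cone $\Nef(X)$, for every nef $D$ one has $H-\epsilon_D D\in\Nef(X)$ for some $\epsilon_D>0$, whence $\|(f^*)^n D\|\le C_D\|(f^*)^n H\|$ via the comparability lemma; as nef classes span $N^1(X)_\RR$, this gives $\|(f^*)^n\|_{\mathrm{op}}\asymp\|(f^*)^n H\|$, and Gelfand's formula yields the limit directly.
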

		\end{subsection}

\end{section}

\end{section}

\begin{section}	{The tractable minimal model program}
We now describe a process for studying dynamical questions from the perspective of the minimal model program.	
\begin{definition}[Tractable minimal model programs]\label{def:mmpdef}
		Let $X$ be a $\QQ$-factorial variety with at worst terminal singularities. Suppose that $f\colon X\ra X$ is a surjective endomorphism.
		\begin{enumerate}
			\item A $f$-equivariant minimal model program or $f$-equivariant MMP is a sequence
			\begin{equation}\label{eq:tractablemmp}
				\xymatrix{X=X_0\ar@{.>}[r]^-{\phi_1}& X_1\ar@{.>}[r]^-{\phi_2}&...\ar@{.>}[r]^-{\phi_r}& X_r}
			\end{equation}
			where each $\phi_i$ is a flipping, divisorial, or fibering contraction along with \textbf{morphisms}  $f_i\colon X_i\ra X_i$ with $f_0=f$ such that $f_i\circ \phi_{i+1}=\phi_{i+1}\circ f_i$.
			\item If $X_r$ is a $Q$-abelian variety then we call the MMP \textbf{tractable}. The reason for this terminology is that if $X$ admits an int-amplified endomorphism then we can always find an MMP ending in a $Q$-abelian variety. 
			\item We call the MMP \textbf{standard} if $X_{r-1}\ra X_r$ is a fibering contraction and for $i>r-1$ we have that $X_i\ra X_{i+1}$ is birational or if $X_i\ra X_{i+1}$ is always birational, and $X_{r}$ is a minimal model.  
			\item We will often denote an $f$-equivariant MMP by $\mathcal{M}$ to denote the data of the sequence of contractions along with the conjugating morphisms. 
			\item We write $\lambda_1(f\mid_\McM)$ for the sequence whose $i^{th}$ coordinate is $(\dim X_i,\lambda_1(f_i\mid_{\phi_{i+1}}))$. The purpose of this notation is to differentiate between the various types of minimal model operations that may occur.
			\item If $X_r$ is not zero dimensional then we say that $f_r\colon X_r\ra X_r$ is a \textbf{primordial model} of $f$ and call $\lambda_1(f_r)$ the \textbf{primordial dynamical degree} of $\McM$. If $X_r$ is zero dimensional we call $f_{r-1}\colon X_{r-1}\ra X_{r-1}$ a \textbf{primordial model} of $f$ and  $\lambda_1 (f_{r-1})$ the \textbf{primordial dynamical degree} of $\McM$. We denote the primordial degree of $\McM$ to be $\lambda_1^{\textnormal{pr}}(\McM)$.
			\item We define the \textbf{primordial dynamical degrees} of the morphism $f$ as \begin{align}
				\underline{\lambda}_1^{\textnormal{pr}}(f)&=\min\{\lambda_1^{\textnormal{pr}}(\McM):\McM\textnormal{ a tractable }f\textnormal{ equivariant MMP} \}\\
				\overline{\lambda}_1^{\textnormal{pr}}(f)&=\max\{\lambda_1^{\textnormal{pr}}(\McM):\McM\textnormal{ a tractable }f\textnormal{ equivariant MMP} \}
			\end{align}
			if a tractable $f$-equivariant MMP exists for $f$ and $\infty$ otherwise. We think of the collection of primordial models of $f$ as its collection of ancestors. The number $	\underline{\lambda}_1^{\textnormal{pr}}(f)$ measures the simplest ancestor, while $\overline{\lambda}_1^{\textnormal{pr}}(f)$ measures the most complex ancestor. 
		\end{enumerate}
		
\end{definition}
Our goal is to capture the complexity of those morphisms which are built from $Q$-abelian varieties. If $\McM$ is an $f$-equivariant MMP as in \ref{eq:tractablemmp} and $X_r$ is zero dimensional. Then a primordial model for $f$ is $X_{r-1}$ which must have Picard number $1$. If $X_{r}$ is positive dimensional then $f$ is built out of a surjective endomorphism of a $Q$-abelian variety, for example an abelian variety if there is a tractable MMP. 
	\begin{example}
	\normalfont	Let $f_1\times f_2\colon \PP^{n_1}\times \PP^{n_2}\ra  \PP^{n_1}\times \PP^{n_2}$ be a surjective morphism.  Since  $\PP^{n_1}\times \PP^{n_2}$ admits two extremal contractions, $\pi_i\colon \PP^{n_1}\times \PP^{n_2}\ra \PP^{n_i}$ there are two $f_1\times f_2$ equivariant minimal model programs
		\[\xymatrix{\PP^{n_1}\times\PP^{n_2}\ar[d]_{\pi_i}\ar[r]^{f_1\times f_2}& \PP^{n_1}\times\PP^{n_2}\ar[d]^{\pi_i}\\ \PP^{n_i}\ar[r]_{f_i}& \PP^{n_i}}\] 
		In this case we have $\overline{\lambda}_1^{\textnormal{pr}}(f_1\times f_2)=\max\{\deg f_1,\deg f_2\}$ and $\underline{\lambda}_1^{\textnormal{pr}}(f_1\times f_2)=\min\{\deg f_1,\deg f_2\}$.	\end{example}
	\begin{example}
	\normalfont	Let $X$ be a smooth variety admitting an MMP
		\[\xymatrix{X=X_0\ar@{.>}[r]^{\phi_1}& X_1\ar@{.>}[r]^{\phi_2}&...\ar@{.>}[r]^{\phi_r}& X_r}\]
		with $X_r$ a $Q$-abelian variety. Let $f_i=\textnormal{id}_{X_i}$. Then this is a equivariant MMP for $\textnormal{id}_X$ and $\overline{\lambda}_1(\textnormal{id}_X)=\underline{\lambda}_1(\textnormal{id}_X)=1$
	\end{example}
	
	\begin{example}
	\normalfont	Let $X$ be a simple abelian variety of Picard number $1$ and $\tau_c\colon X\ra X$ a translation by a non-torsion point. Then $\textnormal{id}\colon X\ra X$ is the only equivariant MMP for $\tau_c$. Thus $\overline{\lambda}_1(\tau_c)=\underline{\lambda}_1(\tau_c)=1$.
	\end{example}
	\begin{tracmmp}\label{tractmmp}
		\normalfont Let $X$ be a variety defined over $\Qb$ with mild singularities so that some version of the minimal model program is possible. Suppose $f\colon X\ra X$ is a surjective endomorphism . Consider some dynamical property of surjective endomorphisms property $\mathcal{D}$. Our goal is to check if $f$ has $\mathcal{D}$. For example, $\mathcal{D}$ could be if $f$ satisfies the Kawaguchi-Silverman conjecture, $f$ has arithmetic eigenvalues, or if $f$ has a dense set of pre-periodic points. Then we have the following program.
		\begin{enumerate}
			\item Verify that if we have a diagram
			\[\xymatrix{X\ar[r]^f\ar[d]_\phi & X
				\ar[d]^\phi\\ Y\ar[r]_g & Y }\]
			with $g$ surjective and $\phi$ a divisorial contraction then $\mathcal{D}$ holds for $f$ if and only if it holds for $g$.
			\item Let $\phi\colon X\ra Y$ is a flipping contraction with flip $\phi^+\colon X^+\ra Y$. If $f^+\colon X^+\dashrightarrow X^+$ extends to a morphism, verify that $f$ has $\mathcal{D}$ if and only if $f^+$ has $\mathcal{D}$.
			\item Determine a condition $F(\mathcal{D})$ such that if
			\[\xymatrix{X\ar[r]^f\ar[d]_\phi & X
				\ar[d]^\phi\\ Y\ar[r]_g & Y }\]
			is a diagram with $\phi$ fibering and that $\phi$ has $F(\mathcal{D})$ then $f$ has $\mathcal{D}$ if and only if $g$ has $\mathcal{D}$. We think of $F(\mathcal{D})$ of some formal notion that says that $\phi$ has well behaved fibers.
			\item Verify that all primordial models have $\mathcal{D}$. In other words, show that surjective endomorphisms of $Q$-abelian varieties and surjective endomorphisms of Picard number 1 varieties have $\mathcal{D}$. 
			\item Define that a tractable MMP $\McM$ has $F(\mathcal{D})$ if every fibering contraction in $\McM$ has $F(\mathcal{D})$. 
			\item Conclude that all surjective endomorphisms that possess a tractable MMP with $F(\mathcal{D})$ has $\mathcal{D}$. 
		\end{enumerate}
		
	\end{tracmmp}
	We now illustrate this idea with some examples of the program and some variants.
\end{section}	
	
	\begin{section}{Pre-periodic points for varieties admitting an int-amplified endomorphism}\label{sec:preperiodicpoints}

In this section we begin to enact the tractable minimal model program outlined in \ref{tractmmp} with $\mathcal{D}$ being the property that a surjective endomorphism has a dense set of pre-periodic points. We first handle (1) and (2) in \ref{tractmmp}.
\begin{proposition}
	Let $X$ be a variety defined over $\overline{\QQ}$ and let $f\colon X\ra X$ be a surjective endomorphism. Fix $n\geq 1$. Then $f$ has a dense set of pre-periodic points if $f^n$ does. 
\end{proposition}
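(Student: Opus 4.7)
The plan is to observe that the set of $f^n$-pre-periodic points is contained in the set of $f$-pre-periodic points, so density of the former forces density of the latter. This reduces the proposition to a purely set-theoretic claim about iterated maps on $X(\Qb)$, with no algebro-geometric input needed beyond the definitions.

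First I would fix notation: call $x \in X$ pre-periodic for a self-map $g$ if its forward $g$-orbit $\{x, g(x), g^2(x), \ldots\}$ is finite, equivalently if there exist integers $a > b \geq 0$ with $g^a(x) = g^b(x)$. Given $x$ pre-periodic for $f^n$, pick $a > b \geq 0$ with $f^{na}(x) = f^{nb}(x)$. Applying $f^i$ for $0 \leq i \leq n-1$ to both sides yields $f^{na+i}(x) = f^{nb+i}(x)$, so each translate $f^i(x)$ is itself pre-periodic under $f^n$, and in particular the forward $f$-orbit of $x$ decomposes as
\[
\{x, f(x), f^2(x),\ldots\} \;=\; \bigcup_{i=0}^{n-1} \{f^i(x),\, f^{i+n}(x),\, f^{i+2n}(x),\ldots\},
\]
which is a finite union of finite sets. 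Hence $x$ is pre-periodic for $f$.

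This gives the inclusion $\mathrm{Prep}(f^n) \subseteq \mathrm{Prep}(f)$ as subsets of $X$. Taking Zariski closures preserves inclusions, so if $\mathrm{Prep}(f^n)$ is dense in $X$ then $\mathrm{Prep}(f)$ is dense as well, which is the claim.

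There is no real obstacle here: the argument is elementary and does not use projectivity, $\overline{\QQ}$-structure, or any minimal model program input; the proposition is stated as a convenient lemma that legitimizes the passage to iterates forced by \ref{lemma:iterationlemma} and \ref{thm:MengZhangMMP} when carrying out the program of \ref{tractmmp} for the property $\mathcal{D}$ = ``has a Zariski dense set of pre-periodic points.''
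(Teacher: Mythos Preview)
Your proof is correct and follows the same idea as the paper: any $f^n$-pre-periodic point is automatically $f$-pre-periodic (the paper simply notes that $f^{an}(u)=f^{nb}(u)$ already exhibits $u$ as pre-periodic for $f$), so density transfers immediately. Your argument is just a slightly more expanded version of the same one-line observation.
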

\begin{proof}
	Let $f^n$ have a dense set of pre-periodic points. Let $U$ be an open set of $X$. Then there is a point $u\in U$ with $f^{an}(u)=f^{nb}(u)$. Then $u$ is a pre-periodic point for $f$ as well. 
\end{proof}

\begin{proposition}\label{prop:morper1}
	Let $X,Y$ be a irreducible varieties defined over $\overline{\QQ}$. Let $\phi\colon X\ra Y$ be a birational morphism. Let $f\colon X\ra X$ and $g\colon Y\ra Y$ be a surjective endomorphisms. Suppose that $\phi\circ f=g\circ \phi$. Then $f$ has a dense set of pre-periodic points if and only if $g$ has a dense set of pre-periodic points 
\end{proposition}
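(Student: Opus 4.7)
The plan is to treat the two directions separately. The forward direction is immediate: as a birational morphism of projective varieties, $\phi$ is surjective, and the conjugacy relation $\phi\circ f^n=g^n\circ\phi$ gives $\phi(\McP(f))\subseteq\McP(g)$, so Zariski density of $\McP(f)$ in $X$ transfers to Zariski density of $\McP(g)$ in $Y$.

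For the converse, my first step is to promote density of pre-periodic points of $g$ to density of genuinely periodic points. Let $Z\subseteq Y$ be the Zariski closure of the periodic locus of $g$. Since $g$ sends periodic points to periodic points and is continuous, $g(Z)\subseteq Z$, equivalently $Z\subseteq g^{-1}(Z)$. Thus
\[Z\subseteq g^{-1}(Z)\subseteq g^{-2}(Z)\subseteq\cdots\]
is an ascending chain of closed subsets of the Noetherian space $Y$, which stabilizes at $g^{-N}(Z)$ for some $N$. Every pre-periodic point of $g$ lies in some $g^{-n}(Z)$, hence $\McP(g)\subseteq g^{-N}(Z)$. Density of $\McP(g)$ in $Y$ forces $g^{-N}(Z)=Y$, and surjectivity of $g^N$ then gives $Y=g^N(Y)\subseteq Z$, so $Z=Y$.

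My second step is to lift periodic points of $g$ through $\phi$. Let $V\subseteq Y$ be the maximal open subset on which $\phi^{-1}$ extends to a morphism, and let $U=\phi^{-1}(V)$, so that $\phi|_U\colon U\ra V$ is an isomorphism. For any nonempty open $W\subseteq X$, irreducibility of $X$ gives $W\cap U\neq\emptyset$, and then $\phi(W\cap U)$ is nonempty open in $V$. By the previous step I may pick a periodic point $y\in\phi(W\cap U)$, say with $g^k(y)=y$, and let $x\in W\cap U$ be its unique preimage. The computation $\phi(f^k(x))=g^k(\phi(x))=y=\phi(x)$, combined with $\phi^{-1}(y)=\{x\}$ (a singleton because $y\in V$), forces $f^k(x)=x$, exhibiting a periodic, hence pre-periodic, point of $f$ inside $W$.

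The main obstacle is step~1. Over $\overline{\QQ}$ one cannot invoke the usual uncountability argument that a countable union of proper closed subsets of $Y$ must be a proper subset, since $Y(\overline{\QQ})$ is itself countable. The resolution is to observe that $Z$ is $g$-invariant, so the relevant countable union is actually an ascending chain, and the Noetherian property of the Zariski topology collapses it to a single closed subset on which the density hypothesis can bite. Once step~1 is available, step~2 is essentially a direct calculation using only the conjugacy relation and the fact that $\phi|_U$ is an isomorphism.
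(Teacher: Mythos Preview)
Your proof is correct, and in the converse direction you take a different---and more careful---route than the paper. The paper argues both directions symmetrically by fixing open sets $U\subseteq X$, $V\subseteq Y$ on which $\phi$ is an isomorphism and transporting pre-periodic points directly: given $q\in\phi(W\cap U)$ with $g^n(q)=g^k(q)$, it lifts to $p\in U\cap W$ with $\phi(p)=q$, deduces $\phi(f^n(p))=\phi(f^k(p))$, and concludes $f^n(p)=f^k(p)$ ``since $\phi$ is an isomorphism on $U$.'' That last step tacitly assumes the common value $g^n(q)=g^k(q)$ lies in $V$, so that the fiber of $\phi$ above it is a singleton; nothing in the argument forces the forward orbit of $q$ to stay in $V$.

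Your Noetherian promotion from pre-periodic to periodic points is exactly what closes this gap: once $q$ is genuinely periodic with $g^k(q)=q\in V$, the image $\phi(f^k(p))=q$ lands over $V$ automatically, the fiber is $\{p\}$ by your choice $U=\phi^{-1}(V)$, and $f^k(p)=p$ follows cleanly. The cost is the extra Step~1, but it is short, uses only surjectivity of $g$ and the Noetherian property, and as a bonus shows that for a surjective endomorphism density of pre-periodic points is equivalent to density of periodic points.
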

\begin{proof}
	Let $U\subseteq X$ be an open set of $X$ and $V\subseteq Y$ be an open set of $Y$ with $\phi\colon U\ra V$ an isomorphism. Let $\McP_f$ be the set of pre-periodic points of $f$ and $\McP_g$ the set of pre-periodic points of $g$. Suppose that $\McP_f$ is dense in $X$. Let $W$ be an open set in $Y$. Then $W\cap V$ is non-empty and open and $\phi^{-1}(W\cap V)$ is open in $X$. So there is a point $p\in \phi^{-1}(W\cap V)$ such that $f^n(p)=f^k(p)$. Then $\phi(f^n(p))=g^n(\phi(p))$ and $\phi(f^k(p))=g^k(\phi(p))$ which tells us that $q=\phi(p)$ is a pre-periodic in $W\cap V$. So $\McP_g$ is dense in $Y$. Now suppose that $\McP_g$ is dense in $Y$. Let $W$ be an open set in $X$. Then $\phi(W\cap U)$ is an open set of $V$ and so there is a point $q\in\phi(W\cap U)$ with $g^n(q)=g^k(q)$. Since $q=\phi(p)$ for some $p\in U\cap W$ we have $\phi(f^n(p))=\phi(f^k(p))$. Since $\phi$ is an isomorphism on $U$ we have that $f^n(p)=f^k(p)$ and so $\McP_f$ is dense in $X$ as needed. \end{proof}

We immediately obtain the following results.

\begin{corollary}\label{cor:toricdensediv}
	Let $X$ be a $\QQ$-factorial variety with at worst terminal singularities. Let $f\colon X\ra X$ be a surjective endomorphism. Let $\phi\colon X\ra Y$ be a divisorial contraction. Let $g\colon Y\ra Y$ be a surjective endomorphism. Suppose that $\phi\circ f=g\circ \phi$. Then $f$ has a dense set of pre-periodic points if and only if $g$ has a dense set of pre-periodic points. 
\end{corollary}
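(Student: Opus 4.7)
The plan is to observe that the corollary is essentially an immediate consequence of Proposition \ref{prop:morper1} once we verify that a divisorial contraction fits the hypotheses of that proposition. Recall that a divisorial contraction $\phi\colon X\to Y$ is by definition a birational morphism whose exceptional locus is a prime divisor $E\subset X$; outside of $E$ (and its image in $Y$) the map $\phi$ restricts to an isomorphism. In particular, there exist non-empty open subsets $U=X\setminus E\subseteq X$ and $V=Y\setminus\phi(E)\subseteq Y$ such that $\phi|_U\colon U\xrightarrow{\sim} V$ is an isomorphism. This is precisely the data used in the proof of Proposition \ref{prop:morper1}.

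First I would set up the standard objects: the pre-periodic sets $\mathcal{P}_f\subseteq X$ and $\mathcal{P}_g\subseteq Y$, and note that the commuting relation $\phi\circ f=g\circ\phi$ implies $\phi(\mathcal{P}_f)\subseteq \mathcal{P}_g$ and, using the partial inverse on $V$, that $\phi^{-1}(\mathcal{P}_g\cap V)\cap U\subseteq \mathcal{P}_f$. Then I would simply invoke Proposition \ref{prop:morper1} with the pair $(U,V)$ above: for any open $W\subseteq Y$, the set $W\cap V$ is non-empty and open, and density of $\mathcal{P}_f$ in $X$ produces a pre-periodic point in $\phi^{-1}(W\cap V)$ whose image lies in $\mathcal{P}_g\cap W$. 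The reverse direction proceeds symmetrically, using that $\phi(W\cap U)$ is a non-empty open subset of $V$ for any non-empty open $W\subseteq X$ (here we use that $U$ is dense in $X$, because $X$ is irreducible and $E$ is a proper closed subset).

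The only subtle point, and the one I would state explicitly, is that $X$ and $Y$ are irreducible in the sense needed for Proposition \ref{prop:morper1}: the $\QQ$-factoriality and terminal singularities are not actually used in the argument itself, but they are what make the notion of a divisorial contraction meaningful in the MMP. Because the corollary is really just a specialization of Proposition \ref{prop:morper1}, there is no substantial obstacle; the only thing to verify is the existence of the isomorphism on a dense open set, which is immediate from the definition of a divisorial contraction. Accordingly, the proof would be at most a few lines.
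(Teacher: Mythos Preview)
Your proposal is correct and matches the paper's approach exactly: the paper presents this corollary with no proof at all, simply stating ``We immediately obtain the following results'' after Proposition~\ref{prop:morper1}, since a divisorial contraction is in particular a birational morphism between irreducible varieties. Your additional remarks about the exceptional locus and the open sets $U,V$ merely unpack why Proposition~\ref{prop:morper1} applies, which is fine but not strictly necessary.
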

\begin{corollary}\label{cor:toricdenseflip}
	Let $X$ be a $\QQ$-factorial variety with at worst terminal singularities. Let $f\colon X\ra X$ be a surjective endomorphism. Let $\phi\colon X\ra Y$ be a flipping contraction.  Let $\psi\colon X\dashrightarrow X^+$ be the associated flipping birational mapping. Let $f^+\colon X^+\ra X^+$ be a surjective endomorphism. Suppose that $\psi\circ f=f^+\circ \psi$. Then $f$ has a dense set of pre-periodic points if and only if $f^+$ has a dense set of pre-periodic points.
\end{corollary}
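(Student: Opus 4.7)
The plan is to reduce the statement to Proposition \ref{prop:morper1} by lifting everything to the closure $\Gamma_\psi \subseteq X \times X^+$ of the graph of the birational map $\psi\colon X \dashrightarrow X^+$. Since $X$ and $X^+$ are irreducible projective varieties and $\psi$ is birational, $\Gamma_\psi$ is itself an irreducible projective variety, and both projections $\pi_X\colon \Gamma_\psi \to X$ and $\pi_{X^+}\colon \Gamma_\psi \to X^+$ are birational morphisms.

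The main step is to construct a surjective endomorphism $\tilde f\colon \Gamma_\psi \to \Gamma_\psi$ as the restriction of $f \times f^+\colon X \times X^+ \to X \times X^+$. To see that this map lands in $\Gamma_\psi$, consider the open dense subset $U_0 = \{(x, \psi(x)) : x \in \mathrm{Dom}(\psi) \cap f^{-1}(\mathrm{Dom}(\psi))\}$ of $\Gamma_\psi$; on $U_0$ the hypothesis $\psi \circ f = f^+ \circ \psi$ gives $(f \times f^+)(x, \psi(x)) = (f(x), \psi(f(x))) \in \Gamma_\psi$. Density of $U_0$ uses that $f$ is a finite morphism (Proposition \ref{prop:surjective=finite}), so that the preimage of an open dense set is again open dense; combined with the fact that $\Gamma_\psi$ is closed in $X\times X^+$ and $f\times f^+$ is continuous, this promotes the inclusion on $U_0$ to $\tilde f(\Gamma_\psi)\subseteq \Gamma_\psi$. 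Surjectivity of $\tilde f$ follows from a dimension count: the image $\tilde f(\Gamma_\psi)$ is an irreducible closed subvariety of $\Gamma_\psi$ that surjects onto $X$ via the birational morphism $\pi_X$ (since $\pi_X \circ \tilde f = f \circ \pi_X$ and $f$ is surjective), hence has dimension $\dim X = \dim \Gamma_\psi$ and therefore equals $\Gamma_\psi$.

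With $\tilde f$ in hand, the two commuting squares $\pi_X \circ \tilde f = f \circ \pi_X$ and $\pi_{X^+} \circ \tilde f = f^+ \circ \pi_{X^+}$ exhibit $\tilde f$ as a simultaneous lift of both $f$ and $f^+$ through birational morphisms between irreducible projective varieties. Applying Proposition \ref{prop:morper1} twice yields the equivalences: $f$ has a Zariski dense set of pre-periodic points if and only if $\tilde f$ does, if and only if $f^+$ does, which is exactly the corollary.

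The main technical hurdle is entirely in the construction of $\tilde f$ -- verifying that $f\times f^+$ preserves $\Gamma_\psi$ and restricts to a surjective endomorphism -- but this is purely formal once one keeps careful track of the indeterminacy locus of $\psi$. In particular, the flipping nature of $\phi, \phi^+$ is not used anywhere beyond ensuring that the birational map $\psi$ exists with the compatibility $\psi\circ f = f^+\circ \psi$, so the same argument would establish the analogous statement for any birational map between irreducible projective varieties intertwining a given pair of surjective endomorphisms.
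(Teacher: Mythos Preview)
Your proof is correct but takes a genuinely different route from the paper. The paper argues \emph{downwards}: both $\phi\colon X\ra Y$ and $\phi^+\colon X^+\ra Y$ are birational morphisms to the base of the flipping contraction, and after replacing $f$ by an iterate (invoking \cite[Theorem 3.3]{1908.11537}) one obtains a surjective $g\colon Y\ra Y$ making both squares commute; Proposition~\ref{prop:morper1} applied to $\phi$ and then to $\phi^+$ gives the chain $f\leftrightarrow g\leftrightarrow f^+$. You instead argue \emph{upwards} through the graph closure $\Gamma_\psi$, build the lift $\tilde f$ directly from $f\times f^+$, and apply Proposition~\ref{prop:morper1} to the two projections.

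What each buys: the paper's approach is the natural one in the flip context since $Y$ is already on the table, but it costs an iteration step and an external citation to guarantee that $f$ descends to a \emph{morphism} on $Y$. Your approach is more self-contained---no iteration, no outside reference---and, as you observe at the end, it works for any birational map $\psi$ between irreducible projective varieties intertwining two surjective endomorphisms, not just flips. The only mild cost is that $\Gamma_\psi$ need not be normal or have controlled singularities, but Proposition~\ref{prop:morper1} as stated in the paper imposes no such hypotheses, so this is harmless.
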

\begin{proof}
	After iterating $f$ we may assume by \cite[Theorem 3.3]{1908.11537} that we have a diagram \[\xymatrix{X\ar[r]^f\ar[d]_\phi & X\ar[d]^\phi \\ Y\ar[r]_g & Y\\ X^+\ar[u]^{\phi^+}\ar[r]_{f^+}& X^+\ar[u]_{\phi^+}}\]
	If $f$ has a dense set of pre-periodic points then so does $g$ by Proposition \ref{prop:morper1} and therefore so does $f^+$ by Proposition \ref{prop:morper1} applied once more. The same argument applies if $f^+$ has a dense set of pre-periodic points. 
\end{proof}
We now define $F(\mathcal{D})$. Recall that here $\mathcal{D}$ is the property that a morphism has a dense set of pre-periodic points. 
\begin{definition}\label{def:enoughpreperiodicpoints}
		Suppose we are given a commuting diagram
		\[\xymatrix{X\ar[r]^f\ar[d]_\phi & X\ar[d]^\phi\\ Y\ar[r]_g & Y}\]
		where $X,Y$ are normal projective varieties and $f,g,\phi$ are surjective endomorphisms. Suppose further that $\phi$ has connected fibers and that the general fiber is normal. We say that $f$ has enough pre-periodic points (with respect to $\phi$) if there is a non-empty open set $W\subseteq Y$ such that for all $p\in W$ with $g^n(p)=g^{n+k}(p)$ for some $n,k\in \ZZ_{\geq 0}$ we have:
		\begin{enumerate}
			\item The induced morphism
			\[f^k\colon \phi^{-1}(g^n(p))\ra \phi^{-1}(g^{n+k}(p))=\phi^{-1}(g^n(p))\]
			has a dense set of pre-periodic points and $\phi^{-1}(g^n(p))$ is normal.
			\item $f^n\colon \phi^{-1}(p)\ra \phi^{-1}(g^n(p))$ is dominant.
		\end{enumerate} 
		\end{definition}
	\begin{example}
		\normalfont Let $f\colon X\ra X$ be a surjective endomorphism defined over a field $K$. Suppose that $X$ is a normal projective variety. Then we have
		\[\xymatrix{X\ar[rr]_f \ar[dr]&& X\ar[dl]\\& \spec k&}\] 
		Then $f$ has enough pre-periodic points for the structure morphism if and only if $f$ has a dense set of pre-periodic points. 
	\end{example}
	\begin{example}
	\normalfont	Let $\pi\colon \PP\McE\ra X$ be the structure morphism of a projective bundle. Suppose we have a diagram \[\xymatrix{\PP\McE\ar[r]^f \ar[d]_\pi& \PP\McE\ar[d]^\pi\\X\ar[r]_g& X}\] Suppose that $\lambda_1(f\mid_\pi)>1$. Then $f$ has enough pre-periodic points with respect to $\pi$. This is because for $p\in X$ we have an induced morphism
		\[f\colon \pi^{-1}(p)\ra \pi^{-1}(g(p)).\] 
		The degree of $f$ on the fibers is $\lambda_1(f\mid_\pi)>1$. So $f$ restricted to a fiber is a polarized endomorphism of projective space; any polarized morphism of projective space has a dense set of pre-periodic points by \cite[5.3]{MR1995861}.
	\end{example}
We now formalize how these ideas relate to the minimal model program. 
	
	An $f$-equivariant MMP $\McM$ \textbf{has enough pre-periodic points} if for all $\phi_i\colon X_{i-1}\ra X_i$ of fibering type in $\McM$ we have that $f_{i-1}$ has enough pre-periodic points with respect to $\phi_i$.
	\begin{definition}\label{def:mmpdefpre}
		Let $X$ be a $\QQ$-factorial variety with at worst terminal singularities. Suppose that $f\colon X\ra X$ is a surjective endomorphism. Consider a tractable $f$-equivariant MMP $\McM$ given by
		\[\xymatrix{X=X_0\ar@{.>}[r]^{\phi_1}& X_1\ar@{.>}[r]^{\phi_2}&...\ar@{.>}[r]^{\phi_r}& X_r}.\]
		We say that	$\McM$ has enough pre-periodic points if for all $\phi_i\colon X_{i-1}\ra X_i$ of fibering type in $\McM$ we have that $f_{i-1}$ has enough pre-periodic points with respect to $\phi_i$.
	\end{definition}
	Our strategy is to run an equivariant MMP on $f$ to determine if $f$ has a dense set of pre-periodic points. However, a basic issue with the above approach is morphisms with $\overline{\lambda}_1^{\textnormal{pr}}(f)=1$. These are morphisms whose primordial ancestors all have dynamical degree $1$. In other words their simplest ancestors may be akin to a translation $\tau_c\colon A\ra A$ or a non-trivial isomorphism $f\colon \PP^n\ra \PP^n$. Such a morphism is induced by a morphism of dynamical degree 1, which may not have a dense set of pre-periodic points. More generally using the notation of Definition \ref{def:mmpdef} if $\phi_{i+1}\colon X_i\ra X_{{i+1}}$ is a fibering type contraction and the general fiber of $\phi_{i+1}$ is a Fano variety of Picard number $1$, then if $\lambda_1(f_i\mid_{\phi_{i+1}})=1$ we have that $f_i$ may fail to have enough pre-periodic points with respect to $\phi_{i+1}$. The definitions given above are meant to isolate precisely where these issues may arise when applying an MMP. Given an MMP $\mathcal{M}$ as in Definition \ref{def:mmpdef} we see that the coordinates $i$ of $\lambda_1(f\mid_{\McM})$ are of the form $(\dim X_i,1)$ with $\dim X_{i+1}<\dim X_i$ are where problems occur. Thus we see that the crucial factor to understand when to the denseness of pre-periodic points of endomorphisms is the behavior with respect to fibering type contractions.
	\begin{proposition}\label{cor:toricdensefib}
		Let $\phi\colon X\ra Y$ be a fibering type extremal contraction of a $\QQ$-factorial variety with at worst terminal singularities. Suppose that we have a diagram
		\[\xymatrix{X\ar[r]^f\ar[d]_\phi & X\ar[d]^\phi\\ Y\ar[r]_g & Y}\]
		with $f,g$ surjective morphisms. Suppose that $g$ has a dense set of pre-periodic points and that $f$ has enough pre-periodic points with respect to $\phi$. Then $f$ has a dense set of pre-periodic points. 
	\end{proposition}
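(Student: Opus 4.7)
The plan is to project down to $Y$, find a pre-periodic point of $g$ inside $\phi(U)$ for an arbitrary non-empty open $U \subseteq X$, and then use the enough-pre-periodic-points hypothesis on fibers to lift it to a pre-periodic point of $f$ inside $U$.

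First I would shrink $W$ so that $\phi|_{\phi^{-1}(W)}$ is additionally flat, and so that every fiber over $W$ is normal (hence, being connected as a fiber of the contraction $\phi$, irreducible). This is possible by generic flatness and by the openness of the locus over which fibers are normal, and it preserves the enough-pre-periodic-points condition. Flat morphisms of finite type are open, so for this refined $W$, the image $\phi(U \cap \phi^{-1}(W))$ is open in $Y$. Replacing $U$ by the non-empty open $U \cap \phi^{-1}(W)$, we may assume that $V := \phi(U) \subseteq W$ is open. Since $g$ has a dense set of pre-periodic points, there is some $p \in V$ with $g^n(p) = g^{n+k}(p) = q$ for suitable $n, k \geq 0$.

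By Definition \ref{def:enoughpreperiodicpoints}(1), $\phi^{-1}(q)$ is normal and connected, hence irreducible, and $f^k \colon \phi^{-1}(q) \to \phi^{-1}(q)$ has a dense set $S$ of pre-periodic points. By Definition \ref{def:enoughpreperiodicpoints}(2), $f^n \colon \phi^{-1}(p) \to \phi^{-1}(q)$ is dominant; combined with the flatness of $\phi$ at $p$ and the upper semi-continuity of fiber dimension, this forces $\dim \phi^{-1}(p) = \dim \phi^{-1}(q)$. Now $U \cap \phi^{-1}(p)$ is a non-empty open subset of the irreducible variety $\phi^{-1}(p)$, so its complement in $\phi^{-1}(p)$ has strictly smaller dimension, and its image under $f^n$ likewise has strictly smaller dimension than $\phi^{-1}(q)$. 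By Chevalley, the image $f^n(U \cap \phi^{-1}(p))$ is constructible, and by the dimension count just given, it is dense in the irreducible variety $\phi^{-1}(q)$, so it contains a non-empty open subset $V'' \subseteq \phi^{-1}(q)$.

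Since $S$ is dense in $\phi^{-1}(q)$, we may choose $s \in V'' \cap S$, write $s = f^n(x)$ for some $x \in U \cap \phi^{-1}(p)$, and note that pre-periodicity of $s$ under $f^k$ translates directly into pre-periodicity of $x$ under $f$; this produces the required pre-periodic point of $f$ inside $U$. I expect the main obstacle to be the fiberwise dimension count: promoting the density of $f^n(U \cap \phi^{-1}(p))$ in $\phi^{-1}(q)$ to actually containing an open subset requires controlling both the irreducibility and the dimension of the relevant fibers, and for that the preliminary shrinking of $W$ is essential. Once those geometric controls are in place, the conclusion follows by pulling back pre-periodic points of $f^k$ along $f^n$.
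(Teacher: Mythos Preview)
Your argument is correct and follows the same overall strategy as the paper: push $U$ down to $Y$, locate a $g$-pre-periodic point $p$ there, and then use the enough-pre-periodic-points hypothesis on the fiber over $g^n(p)$ to pull back an $f$-pre-periodic point into $U$.

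The only differences are in the technical justification of two openness/density steps. The paper invokes Lemma~\ref{lem:suropen} twice: once to say that $\phi$ itself is open (so $\phi(U)$ is open in $Y$), and once to say that $f^n\colon\phi^{-1}(p)\to\phi^{-1}(g^n(p))$ is open (using that $f$, being a surjective self-map of a projective variety, is finite, and the target fiber is normal). You instead shrink $W$ by generic flatness to make $\phi$ open over $W$, and for the fiber map you use a Chevalley/dimension-count argument in place of openness. Your handling of the first point is in fact the more careful one: Lemma~\ref{lem:suropen} is stated for \emph{integral} morphisms and does not literally apply to a fibering contraction with positive-dimensional fibers, so your generic-flatness reduction is the cleaner fix. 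For the second point the paper's route via finiteness of $f$ is more direct than your dimension comparison, but both are valid.
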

	\begin{proof}
		Let $U$ be an open set in $X$.  Set $W_0=\phi(U)$. Since $\phi$ is a surjective morphism between normal varieties \ref{lem:suropen} says that $\phi$ is open. Consequently we have that $W_0$ is open. Let $W$ be as in \ref{def:enoughpreperiodicpoints}. Set $W_0^\prime=W\cap W_0$. We may find $p\in W_0^\prime$ with $g^n(p)=g^m(p)$ since $g$ has a dense set of pre-periodic points. We may further take $p$ general so that $\phi^{-1}(p)$ is normal and connected; the general fiber of a Mori-fiber space is normal and connected. Suppose that $n\leq m$ and that $n+k=m$. By the definition of enough pre-periodic points $\phi^{-1}(g^n(p))$ is normal and by our choice of $p$ we have that $\phi^{-1}(p)$ is normal. Both being connected and normal they are irreducible and $f^n\colon \phi^{-1}(p)\ra \phi^{-1}(g^n(p))$ is the composition $f\circ \iota\colon \phi^{-1}(p)\ra \phi^{-1}(g^n(p)) $ where $\iota\colon \phi^{-1}(p)\ra X$ is the closed immersion. Since $f^n$ is finite and closed immersions are finite we have that $f^n$ is finite unto its image. Thus $f^n\colon \phi^{-1}(p)\ra \phi^{-1}(g^n(p))$ is a dominant finite morphism with normal target. It follows that 
		\[f^n\colon \phi^{-1}(p)\ra \phi^{-1}(g^n(p))\] is an open mapping by \ref{lem:suropen}. Taking $U^\prime=U\cap \phi^{-1}(p)$ we have that $f^n(U^\prime)$ is open in $\phi^{-1}(g^n(p))$. Since $f$ has enough pre-periodic points with respect to $\phi$, the pre-periodic points of 
		\[f^k\colon \phi^{-1}(g^n(p))\ra\phi^{-1}(g^{n+k}(p))=\phi^{-1}(g^n(p))\]
		are dense by definition. So there is a point $q\in f^n(U^\pp) $ with $f^{lk}(q)=f^{lk+t}$ for some $t>0$. Since $q\in f^n(U^\pp)$ we have that $q=f^n(a)$ for some $a\in U^\prime$. Thus
		\[f^{kl+n}(a)=f^{kl}(f^n(a))=f^{kl+t}(f^n(a))=f^{kl+n+t}(a)\]
		so that $a$ is a pre-periodic point of $U^\prime$. Since $U^\prime\subseteq U$ we have that the pre-periodic points of $f$ are dense as claimed. 
		\end{proof}
	\begin{theorem}\label{thm:intamplifiedpreper1}
		Let $X$ be a $\QQ$-factorial variety with at worst terminal singularities. Suppose further that $X$ is rationally connected with $\Alb(X)=0$. Let $f\colon X\ra X$ be a surjective endomorphism. Suppose that $f$ has a tractable MMP with enough pre-periodic points. Then $f$ has a dense set of pre-periodic points.
	\end{theorem}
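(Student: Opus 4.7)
The plan is to induct on the length of the given tractable $f$-equivariant MMP, leveraging the three transfer results already established in this section (for divisorial contractions, flips, and fibering contractions with enough pre-periodic points). Denote the given MMP by
\[
X = X_0 \dashrightarrow X_1 \dashrightarrow \cdots \dashrightarrow X_r
\]
with conjugating surjective endomorphisms $f_i \colon X_i \to X_i$, where $X_r$ is a $Q$-abelian variety by tractability.

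The crucial geometric step is to verify that $X_r$ must be a point. Rational connectedness is preserved under birational modifications, and the image of a rationally connected variety under a dominant morphism is rationally connected; since each $\phi_i$ is either birational (divisorial or flipping) or a fibering contraction, it follows that $X_r$ is rationally connected. On the other hand, a positive-dimensional $Q$-abelian variety admits a finite surjection $h \colon A \to X_r$ from an abelian variety, étale in codimension one, so $h^* K_{X_r} \equiv K_A = 0$, forcing $\kappa(X_r) = 0$. A rationally connected variety is uniruled, and hence has $\kappa = -\infty$, so $X_r$ cannot be positive-dimensional. One can equivalently argue using $\Alb(X)=0$: this triviality is inherited by $X_r$ through the dominant rational map $X \dashrightarrow X_r$, but a positive-dimensional $Q$-abelian variety has non-trivial Albanese.

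Once $X_r$ is a point, the map $\phi_r \colon X_{r-1} \to X_r$ is the structure morphism, and $X_{r-1}$ is a primordial model. By hypothesis the MMP has enough pre-periodic points, so $f_{r-1}$ has enough pre-periodic points with respect to $\phi_r$. As observed in the example following Definition \ref{def:enoughpreperiodicpoints}, enough pre-periodic points with respect to the structure morphism to a point is simply the statement that the set of pre-periodic points is dense. Hence $f_{r-1}$ has a Zariski dense set of pre-periodic points.

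I would then perform descending induction on $i$ from $r-1$ down to $0$, propagating density of pre-periodic points up the MMP. At each step, $\phi_{i+1}$ is either divisorial (apply Corollary \ref{cor:toricdensediv}), a flipping contraction (apply Corollary \ref{cor:toricdenseflip}), or a fibering contraction, in which case $f_i$ has enough pre-periodic points with respect to $\phi_{i+1}$ by hypothesis, and Proposition \ref{cor:toricdensefib} applies. In every case, density on $X_{i+1}$ transfers to density on $X_i$, yielding the conclusion on $X = X_0$. The principal obstacle is the geometric claim that $X_r$ must be a point; the rest of the argument is bookkeeping against the three transfer results already in hand.
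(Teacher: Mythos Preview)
Your argument is correct and essentially relies on the same three transfer results (Corollary~\ref{cor:toricdensediv}, Corollary~\ref{cor:toricdenseflip}, Proposition~\ref{cor:toricdensefib}) as the paper, but the organization differs in a meaningful way. The paper inducts on the Picard number $\rho(X)$: in the base case $\rho(X)=1$ the tractable MMP is simply the fibering contraction $X\to\mathrm{pt}$, so the ``enough pre-periodic points'' hypothesis immediately gives density; in the inductive step one passes through flips until hitting a divisorial or fibering contraction to a variety of smaller Picard number and invokes the hypothesis there. You instead induct on the length $r$ of the given MMP, first arguing geometrically that the terminal $Q$-abelian variety $X_r$ must be a point because it inherits rational connectedness from $X$, and then running a straightforward descending induction from $i=r-1$ down to $i=0$.

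Your route has the virtue of making the role of the hypotheses ``rationally connected'' and ``$\Alb(X)=0$'' completely explicit: they are used precisely to force $X_r=\mathrm{pt}$, and nowhere else. In the paper's proof this step is tacit (it is what guarantees, for instance, that the chain of flips cannot exhaust the MMP when $\rho(X)>1$, and that the $\rho(X)=1$ base case really reduces to a structure morphism). The paper's Picard-number induction, on the other hand, has the mild advantage that it does not require tracking the truncated MMP on each $X_i$ as a new tractable MMP---one simply lands on a variety of smaller Picard number satisfying the same hypotheses. Either induction variable works, and neither argument requires any idea the other lacks.
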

	\begin{proof}
		We induct on the Picard number $\rho=\rho(X)$. If $\rho(X)=1$ then $f^n\colon X\ra X$ are the only possible candidates for a MMP. By assumption for some $n$ we have that $f^n\colon X\ra X$ has a dense set of pre-periodic points. Thus $f$ does as well. Now let $\rho(X)>1$. After iterating $f$ we have by assumption the existence of a tractable MMP with enough pre-periodic points. If a flipping operation appears in the MMP then we have a diagram
		\[\xymatrix{X_i\ar[r]^{f_i}\ar@{.>}[d]_{\phi_{i+1}} & X_i\ar@{.>}[d]^{\phi_i+1}\\ X_{i+1}\ar[r]_{f_{i+1}} & X_{i+1}}\]
		Then by Corollary \ref{cor:toricdenseflip} $f_i$ has a dense set of pre-periodic points if and only if $f_{i+1}$ does. Since by assumption we have a finite MMP, we eventually hit a divisorial or fibering contraction. Suppose that
		\[\xymatrix{X\ar[r]^f\ar[d]_\phi & X\ar[d]^\phi\\ Y\ar[r]_g & Y}\]
		is the first non-flipping contraction in the MMP. Suppose first that $\phi$ is divisorial. Then $g\colon Y\ra Y$ has a MMP with enough pre-periodic points by construction and by induction we have that $g$ has a dense set of pre-periodic points. By Corollary \ref{cor:toricdensediv} $f$ does as well. On the other hand if $\phi$ is fibering then as above by induction we have that $g$ has a dense set of pre-periodic points. Since by assumption $f$ has enough pre-periodic points for $\phi$ we apply Corollary \ref{cor:toricdensefib}) and obtain that $f$ has a dense set of pre-periodic points as needed.  
	\end{proof}
As an application of the above ideas we analyze the behavior of the pre-periodic points of toric morphisms between $\QQ$-factorial toric varieties from this perspective. 
\begin{proposition}\label{prop:pic1toric}
		Let $X$ be a $n$-dimensional $\QQ$-factorial projective toric variety with Picard number $1$. Let $f\colon X\ra X$ be a surjective toric morphism. Then some iterate of $f$ is polarized or $f$ is the identity.
\end{proposition}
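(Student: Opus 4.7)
The plan is to analyze the scalar by which $f^{*}$ acts on the one-dimensional rational N\'eron--Severi space $N^{1}(X)_{\QQ}$. Since $\rho(X)=1$, I fix an ample generator $L \in N^{1}(X)_{\QQ}$. Because $f$ is finite by Proposition~\ref{prop:surjective=finite} and pullback by a finite morphism preserves ampleness, $f^{*}L$ is again ample, so $f^{*}L \Num \lambda L$ for a unique $\lambda \in \QQ_{>0}$. Intersecting this relation with itself $n = \dim X$ times and using the projection formula $(f^{*}L)^{n} = \deg(f)\cdot L^{n}$ gives $\lambda^{n} L^{n} = d \cdot L^{n}$, where $d = \deg(f) \geq 1$; since $L^{n} > 0$ this forces $\lambda = d^{1/n} \geq 1$.

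Two cases remain. If $\lambda > 1$, I use that on a $\QQ$-factorial projective toric variety $\Pic(X)_{\QQ}$ coincides with $N^{1}(X)_{\QQ}$, so $f^{*}L \Lin \lambda L$ holds as $\QQ$-Cartier divisors. With $L$ ample and $\lambda > 1$, this is precisely the definition of a polarized endomorphism, and $f$ itself (hence every iterate) is polarized. If $\lambda = 1$, then $d = 1$, so $f$ is a finite degree-one morphism between normal projective varieties, hence an isomorphism. As a toric automorphism $f$ corresponds to a lattice automorphism $\phi \in GL(N)$ preserving the fan $\Sigma$ of $X$. Since $X$ is projective, $\Sigma$ is complete, so its finitely many rays span $N_{\QQ}$, and $\phi$ is determined by the permutation it induces on the finite set $\Sigma(1)$. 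Consequently the group of toric automorphisms of $X$ embeds into a finite symmetric group, so is finite, and some iterate $f^{k}$ is the identity.

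The calculation is essentially mechanical; the only point that requires a brief word is promoting the numerical equivalence $f^{*}L \Num \lambda L$ to a $\QQ$-linear one in the first case. This is immediate from the standard fact that $\Pic(X)_{\QQ} = N^{1}(X)_{\QQ}$ for a $\QQ$-factorial projective toric variety, so no serious obstacle arises; the content of the proposition is really the dichotomy forced by $\rho(X)=1$ together with the finiteness of the fan automorphism group.
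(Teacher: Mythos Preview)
Your argument is correct and proceeds along a different line from the paper. The paper works on the fan side, asserting that every surjective toric endomorphism of a Picard-rank-one $X_\Sigma$ is induced by a dilation $\phi(v)=nv$ of the cocharacter lattice $N$; then $n>1$ yields a polarized map and $n=1$ yields the identity. You work instead on the N\'eron--Severi side: the single eigenvalue $\lambda\in\QQ_{>0}$ of $f^{*}$ on $N^{1}(X)_{\QQ}$ is pinned down by the projection formula, the toric identity $\Pic(X)_{\QQ}=N^{1}(X)_{\QQ}$ promotes $f^{*}L\Num\lambda L$ to a $\QQ$-linear equivalence, and the degree-one case is handled by the finiteness of the fan-automorphism group. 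Your route is the more robust one: the paper's dilation claim is not literally true before passing to an iterate (the toric involution $[x:y]\mapsto[y:x]$ on $\PP^{1}$, induced by $v\mapsto -v$ on $N$, is not a dilation by a positive integer), whereas your argument needs no such claim.

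Note that in the $\lambda=1$ case you conclude only that some iterate $f^{k}$ is the identity, not that $f$ itself is. This is the correct conclusion: the same involution on $\PP^{1}$ is neither the identity nor has any polarized iterate, so the proposition as literally stated is slightly too strong. Your version is exactly what is used downstream in Corollary~\ref{cor:enoughtoricpoints}, where one only needs the induced map on a fiber to have a dense set of pre-periodic points, and a finite-order automorphism certainly provides that.
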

	\begin{proof}
		Since $X_\Sigma$ is of Picard number one, every equivariant surjective endomorphism is induced by a dilation. If $f$ is induced by $\phi\colon N\ra N$ and $\phi(v)=nv$ and $n>1$ then $f$ is polarized and we are done by \cite[theorem 5.1]{MR1995861} Otherwise $n=1$ and we are done.
		\end{proof}
\begin{proposition}\label{prop:toricblock}
		Let $X$ be a $\QQ$-factorial toric variety and $f\colon X\ra X$ a surjective toric morphism. Let $\phi\colon X\ra Y$ be a fibering type extremal contraction and let $g\colon Y\ra Y$ be a toric morphism with $\phi\circ f=g\circ\phi$.  Let $T_Y$ be the dense torus of $Y$ and $T_X$ the dense torus of $X$. Then for $t\in T_Y$ we have that $\phi^{-1}(t)\cong X^\prime$ for some $\QQ$-factorial projective toric variety of Picard number 1. Moreover if $g(t)=t^\prime$ for $t\in T_Y$ then the induced map $f\colon \phi^{-1}(t)\ra \phi^{-1}(t^\prime)$ is a surjective toric morphism.    
	\end{proposition}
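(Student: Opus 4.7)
The plan is to translate everything into the language of fans and lattice maps, where both assertions become essentially combinatorial. Write $X = X_\Sigma$ for a fan $\Sigma$ in a lattice $N_X$ and $Y = X_{\Sigma'}$ in $N_Y$; the toric morphism $\phi$ corresponds to a surjective $\ZZ$-linear map $\bar\phi\colon N_X \ra N_Y$. Set $N_0 = \ker\bar\phi$ and $T_0 = N_0 \otimes \Gm$, and consider the sub-fan
\[\Sigma_0 = \{\sigma \in \Sigma : \sigma \subseteq (N_0)_\RR\}\]
in $N_0$. By standard toric theory (e.g.\ Cox--Little--Schenck, Chapter 3), the fiber $\phi^{-1}(e_{T_Y})$ over the identity is naturally identified with $X_{\Sigma_0}$, and for any $t\in T_Y$ any lift $s\in T_X$ of $t$ yields a $T_0$-equivariant isomorphism $\phi^{-1}(e_{T_Y}) \xrightarrow{\sim} \phi^{-1}(t)$ via translation by $s$.

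For the Picard number claim, since $X$ is $\QQ$-factorial and projective, $\Sigma$ is simplicial and complete, so $\Sigma_0$ is a simplicial fan. The combinatorial description of toric Mori fiber contractions (Reid's toric MMP; Matsuki's treatment) asserts that the rays of $\Sigma$ partition cleanly into those lying in $(N_0)_\RR$ (the rays of $\Sigma_0$) and those whose images biject with the rays of $\Sigma'$. Combining the formula $\rho(X_\Sigma) = |\Sigma(1)| - \dim X$ for $\QQ$-factorial projective toric varieties with the identity $\rho(X) - \rho(Y) = 1$ (which holds because $\phi$ is an extremal contraction) yields $|\Sigma_0(1)| = \dim N_0 + 1$. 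Hence $X_{\Sigma_0}$ is a complete $\QQ$-factorial projective toric variety of Picard number one, settling the first assertion.

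For the induced map on fibers, $f$ and $g$ are toric and so come from lattice maps $\bar f\colon N_X\ra N_X$ and $\bar g\colon N_Y\ra N_Y$. The relation $\bar\phi\circ\bar f = \bar g\circ\bar\phi$ forces $\bar f(N_0)\subseteq N_0$, and compatibility of $\bar f$ with $\Sigma$ together with the inclusion $\bar f((N_0)_\RR)\subseteq(N_0)_\RR$ shows that $\bar f|_{N_0}$ is compatible with $\Sigma_0$ (any cone $\sigma\in\Sigma_0$ is in $\Sigma$, so $\bar f(\sigma)$ lies in some $\tau\in\Sigma$ with $\tau\subseteq(N_0)_\RR$, hence $\tau\in\Sigma_0$). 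Thus $\bar f|_{N_0}$ induces a toric self-morphism of $X_{\Sigma_0}$. Fixing $t\in T_Y$ and a lift $s\in T_X$, the element $s' = f(s)$ is a lift of $t'=g(t)$, and under the identifications $\phi^{-1}(t)=s\cdot X_{\Sigma_0}$ and $\phi^{-1}(t')=s'\cdot X_{\Sigma_0}$ the restriction $f\colon\phi^{-1}(t)\ra\phi^{-1}(t')$ becomes precisely the toric morphism associated to $\bar f|_{N_0}$. Surjectivity of $f$ on $X$ forces $\bar f$ and hence $\bar f|_{N_0}$ to be injective as a lattice map, so this induced morphism of complete toric varieties of the same dimension is dominant and therefore surjective.

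The main obstacle is the Picard-number-one statement, which is not purely formal: it relies on the structural input that an extremal fibering contraction on a $\QQ$-factorial projective toric variety loses exactly one ray beyond those inherited from $\Sigma'$, and that this ray lies in $(N_0)_\RR$. Once this combinatorial fact from Reid's toric MMP is in hand, both parts of the proposition reduce to routine lattice bookkeeping along the short exact sequence $0\ra N_0\ra N_X\ra N_Y\ra 0$.
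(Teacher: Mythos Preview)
Your proposal is correct and follows essentially the same route as the paper: both set up the short exact sequence $0\ra N_0\ra N_X\ra N_Y\ra 0$, define the sub-fan $\Sigma_0=\{\sigma\in\Sigma:\sigma\subseteq (N_0)_\RR\}$, identify the torus fibers with $X_{\Sigma_0}$, and deduce that $f$ restricts to a toric map on fibers from $\bar f(N_0)\subseteq N_0$. The paper is considerably terser---it outsources the fiber description and the Picard-number-one claim to a citation and only sketches the equivariance---whereas you actually supply the ray-counting argument and the surjectivity check; but the underlying strategy is the same.
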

	\begin{proof}
		Here we follow \cite[3.2]{1204.3883}.	
		Let $X=X_\Sigma$ for a fan $\Sigma\subseteq N_\RR$. The fibering type contraction can be given by a mapping of lattices \[\xymatrix{0\ar[r]& N_0\ar[r]^i\ar[r]& N\ar[r]^\phi& N^\prime\ar[r]& 0}\]
		where $\phi$ is the natural quotient mapping and $Y=Y_{\Sigma^\prime}$ where $\Sigma^\prime$ is the quotient fan. Let $\Sigma_0=\{\sigma\in \Sigma:\sigma\subseteq (N_0)_\RR\}$. Then one can check (see for example \cite[3.2]{1204.3883}) that $\phi^{-1}(T_{N^\prime})\cong T_{N^\prime}\times X_{N_0,\Sigma_0}$. Thus the fibers above the torus are naturally toric varieties with the torus $N_0$ inherited from the torus action on $X_\Sigma$. Now let $t\in T_{N^\prime}$ and consider the morphism $f\colon \phi^{-1}(t)\ra \phi^{-1}(g(t))$. Since $f$ is equivariant and by construction induces a map on $Y_\Sigma$ we have that $f$ preserves $N_0$ and thus for $s\in T_{N_0}$ we have that $f(s)\in T_{N_0}$. Since $f$ is equivariant we obtain that $f\colon \phi^{-1}(t)\ra \phi^{-1}(g(t))$ is as well with respect to the action of $T_{N_0}$ and thus the action on the fibers is equivariant. 
	\end{proof}
	
	\begin{corollary}\label{cor:enoughtoricpoints}
		Let $X$ be a $\QQ$-factorial toric variety and $f\colon X\ra X$ a surjective toric morphism. Let $\phi\colon X\ra Y$ be a fibering type extremal contraction. Then we have a commuting diagram \[\xymatrix{X\ar[r]^f\ar[d]_\phi & X\ar[d]^\phi\\ Y\ar[r]_g & Y}\] where $g$ is toric. Furthermore $f$ has enough pre-periodic points for $\phi$.	
	\end{corollary}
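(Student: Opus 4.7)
The plan is to take $W$ to be the dense torus $T_Y$ of $Y$ and verify the two clauses of Definition \ref{def:enoughpreperiodicpoints} by assembling the fiber structure from Proposition \ref{prop:toricblock} with the Picard-number-one analysis of Proposition \ref{prop:pic1toric}.

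First I would produce the descent $g\colon Y\ra Y$. Since $f$ is toric, $f_*$ permutes the finitely many torus-invariant extremal rays of $\NE(X)$, so after replacing $f$ by a suitable iterate we may assume $f_*R = R$ where $R$ is the ray contracted by $\phi$. Lemma \ref{lem:iterationlemma2} then produces a morphism $g$ making the square commute, and equivariance of $f$ and $\phi$ under their compatible torus actions forces $g$ to be toric; in particular $g(T_Y)\subseteq T_Y$. To verify condition (2) of Definition \ref{def:enoughpreperiodicpoints}, given $p\in T_Y$, Proposition \ref{prop:toricblock} identifies the induced map $f\colon \phi^{-1}(p)\ra\phi^{-1}(g(p))$ with a surjective toric morphism between normal $\QQ$-factorial projective toric varieties of Picard number $1$; composing, $f^n\colon \phi^{-1}(p)\ra\phi^{-1}(g^n(p))$ is a composition of surjections, hence itself surjective and so in particular dominant.

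For condition (1), assume $p\in T_Y$ satisfies $g^n(p) = g^{n+k}(p)$, and set $F = \phi^{-1}(g^n(p))$. Proposition \ref{prop:toricblock} again shows that $F$ is a $\QQ$-factorial projective toric variety of Picard number $1$ (hence normal), and the induced map $f^k\colon F\ra F$ is a surjective toric self-endomorphism. Proposition \ref{prop:pic1toric} then gives two cases: either $f^k$ is the identity, in which case every point of $F$ is fixed and trivially pre-periodic, or some iterate $(f^k)^m$ is polarized, in which case Fakhruddin's theorem \cite{MR1995861} yields a dense set of pre-periodic points for $(f^k)^m$. Since a point has finite forward $f^k$-orbit if and only if it has finite forward $(f^k)^m$-orbit, the pre-periodic loci of $f^k$ and $(f^k)^m$ agree, and $f^k$ itself has a dense set of pre-periodic points on $F$.

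The main subtlety lies in the initial descent: one must pass to an iterate of $f$ so that the extremal ray contracted by $\phi$ is fixed. Once this is in place, everything else is a clean bookkeeping exercise combining Propositions \ref{prop:toricblock} and \ref{prop:pic1toric} with Fakhruddin's density result for polarized endomorphisms of projective varieties.
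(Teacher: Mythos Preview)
Your proof is correct and follows the same route as the paper: take $W=T_Y$, invoke Proposition \ref{prop:toricblock} to see the induced fiber maps are surjective toric morphisms between $\QQ$-factorial projective toric varieties of Picard number one, and then apply Proposition \ref{prop:pic1toric} together with Fakhruddin's density result to get dense pre-periodic points on each periodic fiber. You are in fact more careful than the paper in flagging that one must replace $f$ by an iterate so that $f_*$ fixes the contracted ray before the descent $g$ exists; the paper's one-line proof suppresses this, though in practice it is absorbed into the equivariant MMP framework when the corollary is invoked in Theorem \ref{thm:toricpreper1}.
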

	\begin{proof}
		Take $W$ to be $T_Y$. Then by Proposition \ref{prop:toricblock} we have that $f$ is toric on the fibers which have Picard number 1 and $f$ has a dense set of pre-periodic points by Proposition \ref{prop:pic1toric}. 
	\end{proof}

	\begin{theorem}\label{thm:toricpreper1}
		Let $X$ be a $\QQ$-factorial projective toric variety defined over $\overline{\QQ}$. Let $f\colon X\ra X$ be a surjective toric morphism. Then the set of pre-periodic points of $f$ is Zariski dense. 
	\end{theorem}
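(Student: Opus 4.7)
The plan is to mimic the proof of Theorem \ref{thm:intamplifiedpreper1} but using the toric MMP, which is available unconditionally for all $\QQ$-factorial projective toric varieties regardless of the terminal-singularity hypothesis. I will induct on the Picard number $\rho(X)$.

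For the base case $\rho(X)=1$, Proposition \ref{prop:pic1toric} says some iterate $f^n$ is either the identity or a polarized toric endomorphism. The identity trivially has all points pre-periodic, while in the polarized case Fakhruddin's theorem \cite{MR1995861} gives a dense set of pre-periodic points for $f^n$; since pre-periodicity passes from an iterate back to $f$, we are done.

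For the inductive step with $\rho(X)>1$, I first replace $f$ by a sufficiently large iterate so that the action of $f^\ast$ on $N^1(X)_\RR$ (or equivalently the combinatorial action on the fan) fixes every $K_X$-negative extremal ray. This yields an $f$-equivariant toric MMP
\[X = X_0 \dashrightarrow X_1 \dashrightarrow \cdots \dashrightarrow X_r\]
together with surjective toric endomorphisms $f_i \colon X_i \ra X_i$ commuting with each step. Since the toric MMP terminates, the first non-flipping step is some $\phi \colon X_i \ra X_{i+1}$ with $\rho(X_{i+1}) < \rho(X)$; by induction $f_{i+1}$ has a dense set of pre-periodic points. If $\phi$ is divisorial, Corollary \ref{cor:toricdensediv} transfers denseness to $f_i$; if $\phi$ is fibering, Corollary \ref{cor:enoughtoricpoints} supplies the enough-pre-periodic-points hypothesis and Proposition \ref{cor:toricdensefib} transfers denseness to $f_i$. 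The finitely many intermediate flips between $X$ and $X_i$ are handled by repeated application of Corollary \ref{cor:toricdenseflip}, giving denseness for $f$ itself.

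The main obstacle — and really the only non-formal point — is arranging the toric MMP to be $f$-equivariant on the nose rather than just up to rational equivalence of contractions. This is handled at the level of fans: a surjective toric endomorphism is induced by a linear map on the cocharacter lattice which permutes the finitely many $K_X$-negative extremal rays, so a high enough iterate fixes each ray individually, and the commuting squares at each divisorial, flipping, or fibering step then follow from the general structure theory recalled in Section \ref{sec:surjectivemorphisms} together with Proposition \ref{prop:toricblock}. Everything else in the argument is a purely formal assembly of the transfer lemmas.
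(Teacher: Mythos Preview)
Your proposal is correct and follows essentially the same route as the paper: both arguments run the toric MMP equivariantly and push density of pre-periodic points back up through the transfer lemmas (Corollaries \ref{cor:toricdensediv}, \ref{cor:toricdenseflip}, Proposition \ref{cor:toricdensefib}), with Corollary \ref{cor:enoughtoricpoints} supplying the fibering hypothesis. The only difference is cosmetic: the paper packages the induction on $\rho(X)$ into a single citation of Theorem \ref{thm:intamplifiedpreper1}, whereas you unroll that induction by hand. Your explicit version has the minor virtue of sidestepping the hypothesis mismatch you flagged --- Theorem \ref{thm:intamplifiedpreper1} as stated asks for terminal singularities, which an arbitrary $\QQ$-factorial toric variety need not have --- though the proof of that theorem does not actually use the singularity hypothesis beyond the existence of an MMP, so the paper's shortcut is harmless in practice.
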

	\begin{proof}
		Using the toric minimal model program we may choose equivariant MMP \[\xymatrix{X=X_0\ar@{.>}[r]^{\phi_1}& X_1\ar@{.>}[r]^{\phi_2}&...\ar@{.>}[r]^{\phi_r}& X_r=\textnormal{pt}}\]
		and we may take all morphisms here to be toric. Furthermore, by Corollary \ref{cor:enoughtoricpoints} every fibering type contraction has enough pre-periodic points. So by Theorem \ref{thm:intamplifiedpreper1} we have that $f$ has a dense set of pre-periodic points as needed. 
		\end{proof}
\end{section}
	
\begin{section}{Medvedev-Scanlon conjecture}\label{sec:MSC}
		
		In this section we illustrate how to apply the tractable minimal model program outlined in  definition \ref{tractmmp} to the Medvedev-Scanlon conjecture. In other words we take $\mathcal{D}$ to be the property that $f\colon X\ra X$ has a point with a dense orbit. 	
		\begin{definition}
			Let $X$ be a projective variety and suppose that $f\colon X\ra X$ is a surjective endomorphism. We say that $f$ is \emph{fiber preserving} if there is a positive dimensional variety $Z$ and a dominant rational map $\psi\colon X\dashrightarrow Z$ such that $\psi\circ f=\psi$. 
		\end{definition}
		The conjecture is usually stated as follows.
		\begin{conjecture}[The Medvedev-Scanlon conjecture]\label{conj:M-S}
			Let $X$ be an irreducible variety defined over an algebraically closed field $F$ of characteristic 0. Let $\phi\colon X\dashrightarrow X$ be a dominant rational map. If $\phi$ is not fiber preserving then there is a point $x\in X(F)$ with a forward dense orbit under $\phi$.
		\end{conjecture}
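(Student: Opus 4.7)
The plan is to approach the full Medvedev--Scanlon conjecture through the tractable minimal model program of Section 3. The conjecture is of course a major open problem, so what follows is a strategy with a clear identification of where it breaks down, rather than a complete argument. The strategy has three layers: (1) reduce the dominant rational-map setting to that of a surjective morphism on a suitable birational model; (2) run an equivariant MMP and track the ``forward dense orbit'' and ``not fiber preserving'' properties along each contraction, flip, and fibering step; and (3) establish the conjecture on the primordial base cases at the bottom of the MMP.

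For Step 1, given a dominant rational map $\phi\colon X\dashrightarrow X$, I would first pass to a smooth projective model on which $\phi$ is algebraically stable (no iterate contracts a hypersurface into the indeterminacy locus). In dimension two this is a theorem of Diller--Favre; in higher dimensions one only has partial results for restricted classes of maps. On such a model, resolving indeterminacy and passing to a graph compactification yields a morphism $\widetilde f\colon \widetilde X\to \widetilde X$ on a modification of $X$. Since forward density of an orbit and the fiber-preserving property are both birational invariants away from the indeterminacy locus, the original problem reduces to the corresponding statement for $\widetilde f$, which is now a surjective endomorphism of a normal $\QQ$-factorial projective variety.

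For Step 2, after replacing $f$ by an iterate so that an $f$-equivariant MMP
\[
X=X_0\dashrightarrow X_1\dashrightarrow\cdots\dashrightarrow X_r
\]
exists, I would verify that each elementary operation preserves both the property ``$f$ has a forward dense orbit'' and ``$f$ is not fiber preserving''. For divisorial contractions and flips the argument is essentially identical to Proposition \ref{prop:morper1} and Corollary \ref{cor:toricdenseflip}: the maps are isomorphisms on dense open subsets, so forward dense orbits transfer, and the fiber-preserving condition is intrinsic to the function field and therefore unchanged. For a fibering contraction $\pi_i\colon X_{i-1}\to X_i$, I would argue that $f_{i-1}$ has a dense orbit iff $f_i$ does and the induced map on some periodic generic fiber does, while the fiber-preserving condition on $f_{i-1}$ decomposes into the fiber-preserving condition on $f_i$ together with a statement about the fiber dynamics. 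One then inducts on the Picard number and dimension.

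The base cases produced by this induction are $Q$-abelian varieties and varieties of Picard rank one. On a $Q$-abelian variety one lifts to the covering abelian variety and invokes the classical structure theory of endomorphisms of abelian varieties (isogenies composed with translations) together with density statements for non-torsion translations and Kummer-type arguments. On a Picard-rank-one variety, a surjective endomorphism is either an isomorphism or, after iteration, amplified, in which case Fakhruddin's theorem gives dense pre-periodic points and one then needs an arithmetic equidistribution argument to promote this to a forward dense orbit. The principal obstacle, and by a wide margin, is Step 1: outside dimension two there is at present no general mechanism for producing an algebraically stable birational model of an arbitrary dominant rational self-map, and the tractable MMP of Section 3 is developed only for morphisms. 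Accordingly, any complete attack along these lines must either restrict to a class of $(X,\phi)$ for which stable models are available (for instance those coming from morphisms of varieties with an int-amplified endomorphism, as in the conditional result of Section 4), or be preceded by a substantially new construction of stable birational models for rational dynamical systems in dimension $\ge 3$.
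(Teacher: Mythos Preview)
The statement is a \emph{conjecture}; the paper does not prove it and does not claim to. What the paper does (Section~\ref{sec:MSC}) is isolate, via the tractable MMP, exactly which sub-problems would need to be solved in order to obtain the conjecture for surjective morphisms of varieties admitting an int-amplified endomorphism. So there is no ``paper's own proof'' to compare against; your proposal should be read as a strategy and judged on where it actually breaks.

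Your Step~1 contains a genuine error, not just the gap you flag. Resolving indeterminacy of $\phi\colon X\dashrightarrow X$ produces a morphism $\widetilde X\to X$ from a blowup of $X$, not a self-map $\widetilde X\to\widetilde X$. Passing to a graph compactification likewise gives a correspondence, not an endomorphism. Algebraic stability (even where available) controls the behaviour of pullback on cohomology; it does not convert a rational self-map into a morphism on any birational model. So the reduction ``dominant rational map $\Rightarrow$ surjective morphism on a modification'' simply does not exist with current technology, and the rest of the machinery in Section~3 is genuinely restricted to morphisms. This is why the paper only ever addresses the conjecture for surjective endomorphisms.

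Your Step~2 glosses over precisely the point the paper identifies as the bottleneck. For divisorial and flipping contractions your claim is correct and matches Lemma~\ref{lem:MCbirkey} and Corollary~\ref{cor:MCbirkey}. But for a fibering contraction, the assertion that ``$f_{i-1}$ has a dense orbit iff $f_i$ does and the induced map on some periodic generic fiber does'' is exactly what is \emph{not} known: this is the ``relative difficulty $0$'' hypothesis in Theorem~\ref{thm:intamproadblock}, which the paper assumes rather than proves. Finally, in your base cases, dense pre-periodic points for an amplified endomorphism on a Picard-rank-one variety do not by any known argument yield a forward dense orbit; the ``arithmetic equidistribution argument'' you invoke is not available.
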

		
The Medvedev-Scanlon conjecture behaves well with respect to iteration.
		\begin{lemma}[Lemma 2.1 \cite{MR3711275}]\label{conj:weakM-S}
			Let $X$ be an irreducible variety defined over an algebraically closed field $F$ of characteristic 0. Let $\phi\colon X\dashrightarrow X$ be a dominant rational map. If $\phi^n$ is not fiber preserving for some $n\geq 1$ then $\phi$ is not fiber preserving. 
		\end{lemma}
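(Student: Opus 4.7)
The plan is to prove the contrapositive: if $\phi$ is fiber preserving, then $\phi^n$ is fiber preserving for every $n \geq 1$. By hypothesis there exist a positive-dimensional variety $Z$ and a dominant rational map $\psi \colon X \dashrightarrow Z$ satisfying $\psi \circ \phi = \psi$ as rational maps. My claim is that the same pair $(Z,\psi)$ witnesses the fiber-preserving property for every iterate of $\phi$, which I would verify by induction on $n$.

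The base case $n=1$ is the hypothesis. For the inductive step, assume $\psi \circ \phi^{n-1} = \psi$. Since $\phi$ is dominant, so is every iterate $\phi^{n-1}$, so composition of the relevant rational maps is well-defined and associative. Therefore
\[\psi \circ \phi^n \;=\; (\psi \circ \phi^{n-1}) \circ \phi \;=\; \psi \circ \phi \;=\; \psi,\]
where the middle equality uses the inductive hypothesis and the last uses the original fiber-preserving identity. This completes the induction and shows that $\psi$ together with $Z$ continues to exhibit $\phi^n$ as fiber preserving.

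The only point deserving a moment's care in the write-up is the justification that these equalities of rational maps are stable under the compositions above. This reduces to the standard fact that two rational maps agreeing on a dense open subset of the source coincide as rational maps, together with the observation that dominant rational maps pull dense open subsets back to dense subsets (so that the locus of definition on which $\psi \circ \phi = \psi$ holds can be transported along $\phi^{n-1}$). There is essentially no real obstacle here; the lemma is a formal consequence of the definition of fiber preservation together with the dominance hypothesis on $\phi$.
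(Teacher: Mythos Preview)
Your argument is correct. The paper does not supply its own proof of this lemma; it is quoted directly from \cite{MR3711275} and used as a black box, so there is nothing to compare against beyond noting that your contrapositive-plus-induction is exactly the standard justification, and the care you take with well-definedness of compositions of dominant rational maps is appropriate.
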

		\begin{definition}\label{def:difficulty}
			Let $X$ be a normal projective variety defined over $\Qb$. Let $f\colon X\ra X$ be a dominant morphism. We say that a closed sub-variety $V\subseteq X$ is \emph{dynamically constructible} or D-constructible by $f$ if there is a closed irreducible sub-variety $W\subseteq X$ such that
			\begin{equation}
				\overline{\OO_f(W)}=V.   
			\end{equation}
			We say that $W$ a generator for $V$. If $V$ is a dynamically constructible sub-variety of $X$ we say the \emph{dynamical difficulty} or D-difficulty of $V$ is the number
			\begin{equation}
				\textnormal{Difficulty}(V,f)=\min_{W\subseteq X, W \textnormal{ a generator for }V}\dim(W).    
			\end{equation}
			We think of the difficulty as a measure of how hard it is to dynamically construct $V$. If $V$ is not D-constructible then we set the difficulty to $\infty$. If the $D$-difficulty of $V$ is finite then we say that an irreducible sub-variety $W$ is a \textbf{progenitor} of  $V$ if $\overline{\OO_f(W)}=V$ and $\dim W=\textnormal{Difficulty}(V,f)$. In other words, the progenitors of $V$ are the generators of minimal dimension. 
		\end{definition}
		\begin{example}
			Let $X$ be any projective variety and $f$ a finite order automorphism of $X$. Then the D-difficulty of $X$ with respect to $f$ is $n=\dim X$.
			\end{example}
		\begin{example}
			Let $X=C\times C$ where $C$ is an elliptic curve. Let $f(x,y)=(2x,y)$. Then $f$ has no point with dense forward orbit but if $P$ is a non-torsion point of $C$ then $P\times C$ is a curve with a dense orbit under $f$. So the D-difficulty of $C\times C$ is $1$. 
		\end{example}
		With this notation we have the following rephrasing of the Kawaguchi-Silverman conjecture. Let $X$ be a normal projective variety and $f\colon X\ra X$ a surjective endomorphism. Suppose that $\textnormal{Difficulty}(X,f)=0$. Then if $P$ is an progenitor for $X$ we have that $\alpha_f(P)=\lambda_1(f)$. Our idea here is to point out that the Kawaguchi-Silverman conjecture is only interesting when $X$ can be built as an orbit closure of the forward orbit of a point of $f$. Now let $X$ be a normal projective variety defined over $\Qb$ equipped with a surjective endomorphism $f\colon X\ra X$. The Medvedev-Scanlon conjecture is equivalent to the statement that the $D$-difficulty of $X$ with respect to $f$ is zero unless $f$ preserves a rational fibration.
		\begin{definition}[Relative difficulty]
			Let $X,Y$ be normal irreducible projective varieties. Let $f\colon X\ra X$ be a surjective endomorphism and $\pi\colon X\ra Y$ a surjective endomorphism with $g\circ\pi =\pi\circ f$. We say that $f$ has relative D-difficulty at most $k$ with respect to $\pi$ if there is a non-empty open set $W\subseteq Y$ such that for all $y\in W(\Qb)$ we have that
			\[V_y=\overline{\OO_f(\pi^{-1}(y))}\]
			has $D$-difficulty at most $k$. In other words for all $y\in W$ we have
			\begin{equation}
				\textnormal{Difficulty}(\OO_f(\pi^{-1}(y)),f)\leq k.
			\end{equation}
			We say that the relative difficulty is $k$ if the relatively difficulty is at most $k$ and \textbf{not} at most $k-1$.
		\end{definition} 
		
	\begin{definition}[Dynamical set up for the Medvedev-Scanlon conjecture.]\label{def:mmpdefMS}
			Let $X$ be a $\QQ$-factorial normal variety with at worst terminal singularities. Suppose that $f\colon X\ra X$ is a surjective endomorphism. Consider an $f$-equivariant MMP
			\[\xymatrix{X=X_0\ar@{.>}[r]^{\phi_1}& X_1\ar@{.>}[r]^{\phi_2}&...\ar@{.>}[r]^{\phi_r}& X_r}\]
			where each $\phi_i$ is a flipping, divisorial, or fibering contraction along with morphisms $f_i\colon X_i\ra X_i$ with $f_0=f$ such that $f_i\circ \phi_{i+1}=\phi_{i+1}\circ f_i$. An $f$-equivariant MMP $\McM$ has relative difficulty at most $k$ if for all $\phi_i\colon X_{i-1}\ra X_i$ of fibering type in $\McM$  we have that $f_{i-1}$ has relative difficulty at most $k$ with respect to $\phi_i$.
			\end{definition}
\begin{lemma}\label{lem:MCbirkey}
		Let $f\colon X\ra X$ be a surjective endomorphism of a normal $\QQ$-factorial normal variety with at worst terminal singularities. Let $\phi\colon X\ra Y$ be a contraction morphism and $g\colon Y\ra Y$ a surjective endomorphism with $g$ surjective and $g\circ \phi=\phi\circ f$. Suppose that $\phi$ is birational. Then $\textnormal{Difficulty}(X,f)=r\iff\textnormal{Difficulty}(Y,g)=r$. In particular, $f$ has a point with dense orbit if and only if $g$ does. 
		\end{lemma}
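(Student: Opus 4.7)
The plan is to exploit that $\phi$ is birational and projective, hence proper and closed, and restricts to an isomorphism $\phi|_U\colon U\xrightarrow{\sim} V$ over some dense open $V=\phi(U)\subseteq Y$. The key identity is that for a proper morphism $\phi$ and any subset $S\subseteq X$ one has $\phi(\overline{S})=\overline{\phi(S)}$; combined with $g\circ\phi=\phi\circ f$ this yields
\[ \phi\bigl(\overline{\OO_f(W)}\bigr)=\overline{\OO_g(\phi(W))} \]
for every closed irreducible $W\subseteq X$. Essentially the entire argument is driven by this identity, applied in both directions.

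The inequality $\textnormal{Difficulty}(Y,g)\leq\textnormal{Difficulty}(X,f)$ is the easy half: I would push a progenitor $W$ of $X$ forward. Surjectivity of $\phi$ together with the identity gives $\overline{\OO_g(\phi(W))}=\phi(X)=Y$, and $\phi(W)$ is closed irreducible of dimension at most $\dim W$, so it is a generator of $Y$.

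For the reverse inequality I would lift a progenitor $W'$ of $Y$ back to $X$. The main obstacle is that a priori $W'$ may lie entirely inside the exceptional locus $Y\setminus V$, in which case $\phi|_U^{-1}$ does not see it. I would resolve this by iterating $g$: by Proposition \ref{prop:surjective=finite} the morphism $g$ is finite, so each $g^n(W')$ is closed irreducible of dimension exactly $\dim W'$. If every $g^n(W')$ were contained in $Y\setminus V$, then $\overline{\OO_g(W')}\subseteq Y\setminus V\subsetneq Y$, contradicting that $W'$ generates $Y$. A standard dimension argument (the head $\bigcup_{k<n}g^k(W')$ is a finite union of irreducible closed subsets of dimension $\leq\dim W'$, and the case $\dim W'=\dim Y$ is trivial, so it is a proper subset of the irreducible $Y$, forcing the tail $\overline{\OO_g(g^n(W'))}$ to still equal $Y$) shows that $g^n(W')$ remains a progenitor of $Y$. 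Replacing $W'$ by $g^n(W')$ for a suitable $n$, I may assume $W'\cap V\neq\emptyset$, and take $W\subseteq X$ to be the closure of $(\phi|_U)^{-1}(W'\cap V)$; this is irreducible with $\dim W=\dim W'$ and $\phi(W)=W'$.

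It remains to verify that $W$ generates $X$. The identity gives $\phi(\overline{\OO_f(W)})=\overline{\OO_g(W')}=Y$. Writing $Z:=\overline{\OO_f(W)}$ as a union of its irreducible components and using that $\phi$ is closed and $Y$ is irreducible, some component $Z_i$ must satisfy $\phi(Z_i)=Y$; since $\phi$ is birational this forces $\dim Z_i\geq\dim Y=\dim X$, whence $Z_i=X$ by irreducibility of $X$. Thus $W$ is a generator of $X$ of dimension $\dim W'$, giving $\textnormal{Difficulty}(X,f)\leq\textnormal{Difficulty}(Y,g)$. The \emph{in particular} statement is exactly the $r=0$ case, since a point with dense forward orbit is precisely a progenitor of dimension zero.
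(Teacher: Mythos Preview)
Your proof is correct and takes a genuinely different route from the paper's. The paper argues pointwise with open sets: it fixes an open $U\subseteq X$, pushes it to $Y$, finds an iterate of a point landing in the image, and pulls back using that the fibre over a non-exceptional point is a singleton. Your argument instead packages everything into the single identity $\phi(\overline{\OO_f(W)})=\overline{\OO_g(\phi(W))}$ coming from properness, and then reasons with dimensions and irreducibility.

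The main divergence is in the hard direction $\textnormal{Difficulty}(X,f)\leq\textnormal{Difficulty}(Y,g)$. The paper takes a progenitor $W\subseteq Y$, asserts that $W\not\subseteq\phi(E)$, and works with the full preimage $\phi^{-1}(W)$; the claims that a progenitor of $Y$ must avoid $\phi(E)$ and that $\dim\phi^{-1}(W)=\dim W$ are stated without much justification and are delicate (the latter can fail if $W$ meets $\phi(E)$ and $\phi^{-1}(W)$ picks up a component of $E$). You sidestep both issues cleanly: by iterating $g$ you \emph{force} the progenitor to meet the isomorphism locus $V$, then lift via the strict transform, which has the correct dimension by construction. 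Your final step, using irreducibility of $Y$ to find a component $Z_i$ with $\phi(Z_i)=Y$ and hence $Z_i=X$, is also tidier than tracking which component of $\phi^{-1}(W)$ has dense orbit.

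What your approach buys is a more robust argument that does not depend on unproven positional claims about progenitors relative to the exceptional set; what the paper's open-set argument buys is a more hands-on picture of why orbits remain dense under $\phi$ and $\phi^{-1}$, which is closer in spirit to the analogous arguments for fibering contractions later in the paper.
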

	\begin{proof}
		Let $E$ be the exceptional locus of $\phi$ and let $Z=\phi(E)$. Suppose that $W\subseteq X$ is closed and irreducible and $\OO_f(W)$ is dense in $X$.  I claim that $\phi(W)$ is dense in $Y$ with respect to $g$. Let $V$ be open in $Y$. Then $\phi^{-1}(V)=U$ is open in $X$. So there is some $n\geq 1$ with $f^n(W)\cap U\neq \emptyset$ as $\OO_f(W)$ is dense in $X$. So there is some $P\in W$ with $f^n(P)\in U$. Then $g^n(\phi(P))\in V$. Since $\phi(P)\in \phi(W)$ we have that $\OO_g(\phi(W))$ is dense in in $Y$. Conversely, suppose that $W\subseteq Y$ is closed and irreducible and $\OO_g(W)$ is dense in $Y$. I claim that $\OO_f(\phi^{-1}(W))$ is dense in $X$. Let $U$ be open in $X$. Then $\phi(U\setminus E)=V$ is open in $Y\setminus Z$.  Since $W$ is dense in $Y$ we have that $g^n(W)\cap V\neq \emptyset$. Therefore there is a point $Q\in W$ with $g^n(Q)\in V$. Let $P\in \phi^{-1}(Q)$. Then $f^n(P)\in \phi^{-1}(g^n(P))$. Since $g^n(P)\notin Z$ and $\phi$ is birational $\phi^{-1}(g^n(P))$ is a singleton (because $Z$ is the image of the exceptional locus of $\phi$). Thus $f^n(P)$ is the unique point of $X$ with $\phi(f^n(P))=g^n(Q)$. Since $g^n(Q)\in V$ we have that the point in the fiber $\phi^{-1}(g^n(Q))$ must be in $U$. Therefore $f^n(P)\in U$ and so $\phi^{-1}(W)$ is dense in $X$ as needed. Now let $W$ be a progenitor of $X$ as in \ref{def:difficulty}. Then $\OO_g(\phi(W))$ is dense in $Y$ by the above argument. Since $\OO_f(W)$ is dense we have that $W$ is not contained in the exceptional locus of $f$. Thus $\dim \phi(W)=\dim W$ as $\phi$ is birational and $W$ is not contained in the exceptional locus. Thus $\textnormal{Difficulty}(X,f)\geq \textnormal{Difficulty}(Y,g)$ as $Y$ has a generator of dimension $\textnormal{Difficulty}(X,f)$. Conversely, let $W$ be a progenitor of $Y$. Then $W$ is not contained in $Z=\phi(E)$ the image of the exceptional locus. As $\phi$ is birational and $W$ is not contained in the image of exceptional locus we have $\dim W=\dim \phi^{-1}(W)$. By the above argument we have that $\phi^{-1}(W)$ has dense orbit. Thus some irreducible component of $\phi^{-1}(W)$ has a dense orbit and we obtain that $\textnormal{Difficulty}(X,f)\leq \textnormal{Difficulty}(Y,g)$ as $X$ has a generator of dimension $\textnormal{Difficulty}(Y,g)$. We thus obtain the desired result that $\textnormal{Difficulty}(X,f)= \textnormal{Difficulty}(Y,g)$. 
\end{proof}

We see that the difficulty is preserved by a conjugating birational morphism. 
		\begin{corollary}\label{cor:MCbirkey}
			Let $f\colon X\ra X$ be a surjective endomorphism of a normal $\QQ$-factorial variety with at worst terminal singularities. Let $\phi\colon X\ra Y$ be a flipping contraction and $g\colon Y\ra Y$ a surjective endomorphism with $g$ surjective and $g\circ \phi=\phi\circ f$. Suppose that $\psi\colon X\dashrightarrow X^+$ is the canonical birational morphism to the flip of $\phi$ and that the birational mapping $f^+=\phi \circ f\circ \phi^{-1}\colon X^+\ra X^+$ extends to a surjective morphism $f^+X^+\ra X^+$. Then $\textnormal{Difficulty}(X,f)=r\iff \textnormal{Difficulty}(X^+,f^+)=r$. In particular $f$ has a point with Zariski dense orbit if and only if $f^+$ does.
		\end{corollary}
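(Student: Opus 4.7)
The plan is to reduce the corollary directly to Lemma \ref{lem:MCbirkey} by splitting the flip diagram into two birational contractions sharing the common target $Y$, and applying that lemma on each side. First I would assemble the standard flip diagram
\[\xymatrix{X\ar[r]^f \ar[d]_\phi & X\ar[d]^\phi\\ Y\ar[r]_g & Y\\ X^+\ar[u]^{\phi^+}\ar[r]_{f^+}& X^+\ar[u]_{\phi^+}}\]
in which $\phi$ and $\phi^+$ are birational contraction morphisms and both $f$ and $f^+$ descend to the same morphism $g$ on $Y$. The existence of a single common descent $g$ is exactly Lemma \ref{lem:extensionlem2} (the morphism extension property of Zhang), and the hypothesis of the corollary guarantees that $f^+$ is an honest morphism rather than just a rational map.

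Once this diagram is in place, I would apply Lemma \ref{lem:MCbirkey} to the top square to obtain $\textnormal{Difficulty}(X,f) = \textnormal{Difficulty}(Y,g)$, and apply it again to the bottom square (with $\phi^+$ in the role of the birational contraction) to obtain $\textnormal{Difficulty}(X^+,f^+) = \textnormal{Difficulty}(Y,g)$. Chaining the two equalities gives $\textnormal{Difficulty}(X,f) = \textnormal{Difficulty}(X^+,f^+)$, which yields the claimed biconditional at every value of $r$; specializing to $r = 0$ gives the ``in particular'' statement about the existence of a Zariski dense orbit.

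The only thing to verify beyond invoking the previous lemma twice is that the hypotheses of Lemma \ref{lem:MCbirkey} really do apply to $\phi^+ \colon X^+ \to Y$. Two points need checking: (i) $X^+$ is $\QQ$-factorial and has at worst terminal singularities, which is the standard output of a flip in the terminal setting; and (ii) $g \circ \phi^+ = \phi^+ \circ f^+$, which is built into our use of Lemma \ref{lem:extensionlem2} because that lemma asserts precisely that $f$ and $f^+$ descend to the same morphism on $Y$. With those two items in place the argument is entirely symmetric, so no new ideas beyond Lemma \ref{lem:MCbirkey} are required; the main (mild) obstacle is verifying that it is the \emph{same} $g$ appearing in both halves of the diagram, which is exactly what the extension property buys us.
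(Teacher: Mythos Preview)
Your approach is correct and matches the paper's intended argument: the paper states this corollary without an explicit proof, but the proof of the parallel Corollary~\ref{cor:toricdenseflip} (for pre-periodic points) uses exactly the two-step descent through $Y$ via the flip diagram that you describe. The only cosmetic difference is that in Corollary~\ref{cor:toricdenseflip} the paper first passes to an iterate to ensure the diagram exists, whereas here the hypotheses already hand you $g$ and $f^+$, so no iteration is required.
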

		
\begin{lemma}\label{lem:MSfibrationbackbone}
			Let $f\colon X\ra X$ be a surjective endomorphism of a normal $\QQ$-factorial variety with at worst terminal singularities. Let $\phi\colon X\ra Y$ be a contraction morphism and $g\colon Y\ra Y$ a surjective endomorphism with $g$ surjective and $g\circ \phi=\phi\circ f$. Suppose that $\phi$ is of fibering type. Let $W\subseteq Y$ be closed. Then $\OO_g(W)$ is dense in $Y$ if and only if $\OO_f(\phi^{-1}(W))$ is dense in $X$. 	
		\end{lemma}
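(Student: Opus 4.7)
My plan is to handle the two implications separately; the forward direction is formal, while the reverse direction hinges on a fiber-surjectivity statement that I expect to be the main obstacle.

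For the forward direction, I would use the equivariance $\phi\circ f^n=g^n\circ\phi$ and the surjectivity of $\phi$ (so that $\phi(\phi^{-1}(W))=W$) to compute
\[
\phi(\OO_f(\phi^{-1}(W)))=\bigcup_{n\geq 0}g^n(\phi(\phi^{-1}(W)))=\OO_g(W).
\]
Since $\phi$ is a continuous surjective map and continuous surjections carry dense subsets to dense subsets, density of $\OO_f(\phi^{-1}(W))$ in $X$ immediately forces density of $\OO_g(W)$ in $Y$.

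For the reverse direction, fix a nonempty open $V\subseteq X$; my goal is to exhibit a point of $\OO_f(\phi^{-1}(W))$ lying in $V$. Because $Y$ is normal and $\phi$ is a dominant morphism of integral schemes, Lemma \ref{lem:suropen} gives that $\phi$ is open, so $\phi(V)$ is a nonempty open subset of $Y$. Since $\phi$ is a fibering contraction (in particular $\phi_*\OO_X=\OO_Y$, so the generic fiber is geometrically irreducible), there is a dense open $Y_0\subseteq Y$ over which the fibers of $\phi$ are irreducible of dimension $d=\dim X-\dim Y$. Applying density of $\OO_g(W)$ to the nonempty open set $\phi(V)\cap Y_0$ produces $w\in W$ and $n\geq 0$ with $g^n(w)\in\phi(V)\cap Y_0$.

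The crux, and the main obstacle, is the fiber-surjectivity claim that whenever $g^n(w)\in Y_0$ one has $f^n(\phi^{-1}(w))=\phi^{-1}(g^n(w))$. My plan for this is a dimension count: by the fiber dimension theorem applied to the dominant morphism $\phi$, $\phi^{-1}(w)$ contains an irreducible component $C$ with $\dim C\geq d$. Since $f^n$ is finite by Proposition \ref{prop:surjective=finite}, $f^n(C)$ is closed, irreducible, and of dimension $\dim C$, and by equivariance it is contained in $\phi^{-1}(g^n(w))$, which is irreducible of dimension $d$ by the choice of $Y_0$. This forces $\dim C=d$ and $f^n(C)=\phi^{-1}(g^n(w))$. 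Granting the claim, any $v\in V$ with $\phi(v)=g^n(w)$ (available because $g^n(w)\in\phi(V)$) satisfies $v\in\phi^{-1}(g^n(w))=f^n(\phi^{-1}(w))\subseteq\OO_f(\phi^{-1}(W))$, so $V$ meets $\OO_f(\phi^{-1}(W))$ and the proof is complete.
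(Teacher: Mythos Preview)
Your proof is correct, and the overall architecture matches the paper's: in the substantive implication you use openness of $\phi$ (Lemma \ref{lem:suropen}) to place $g^n(w)$ in $\phi(V)$, and then you need $f^n$ to map $\phi^{-1}(w)$ onto the fiber $\phi^{-1}(g^n(w))$ in order to pull a point back into $V$. The genuine difference is in how this fiber surjectivity is obtained. The paper arranges for both fibers to be normal and irreducible, then argues that $h=f^n\vert_{\phi^{-1}(P)}$ is open (invoking openness of $f^n\colon X\to X$) and closed, hence surjective onto the irreducible target; this implicitly uses that universal openness of $f^n$ passes to the fiberwise restriction via base change. Your route instead uses a pure dimension count: a component $C$ of $\phi^{-1}(w)$ has $\dim C\ge d$, finiteness of $f^n$ makes $f^n(C)$ closed irreducible of dimension $\ge d$ inside the irreducible $d$-dimensional fiber $\phi^{-1}(g^n(w))$, forcing equality. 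This is more elementary, requires no normality of the source fiber $\phi^{-1}(w)$, and sidesteps the question of how openness of $f^n$ on $X$ transfers to $h$. For the easy implication, your one-line argument via $\phi(\OO_f(\phi^{-1}(W)))=\OO_g(W)$ plus the fact that continuous surjections preserve density is slicker than the paper's explicit point chase, but the content is the same.
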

		\begin{proof}
			Suppose that $W$ has a dense orbit under $g$. Let $U$ be a non-empty open set of $X$ and set $V=\phi(U)$. Then $V$ is open in $Y$ by \ref{lem:suropen}. So $g^n(W)\cap V$ is non-empty as $W$ has a dense orbit. This intersection contains general points, so we may find $P\in W$ with $g^n(P)\in V$ and $\phi^{-1}(g^n(p))$ a normal $\QQ$-factorial Fano variety of dimension $\dim X-\dim Y$. We can then find $Q^\pp\in U$ with $\phi(Q^\pp)=g^n(P)$. Now consider the mapping on fibers
			\begin{equation}\label{eq:diffeq1}
				h=f^n\colon \phi^{-1}(P)\ra \phi^{-1}(g^n(P)).
				\end{equation}
			The mapping $h$ is open since $f^n$ is open by \ref{lem:suropen}. Therefore $h$ is dominant since $\phi^{-1}(g^n(P))$ is an irreducible variety. Since the map $f^n$ is also closed we have that $h$ is a closed and dominant mapping with an irreducible target. It follows that $h$ is surjective. So there is a point $Q\in \phi^{-1}(P)$ with $h(Q)=f^n(Q)=Q^\pp$. Since $\phi(Q)=P\in W$ we have that $Q\in \phi^{-1}(W)$ and so $f^n(\phi^{-1}(W))\cap U\neq \emptyset$. Thus $\OO_f(\phi^{-1}(W))$ is dense in $X$ as needed. On the other hand suppose that $\OO_f(\phi^{-1}(W))$ is dense in $X$. Let $V$ be open and non-empty in $Y$. Then $\phi^{-1}(V)\cap f^n(\phi^{-1}(W))\neq \emptyset$ as $\phi^{-1}(W)$ is dense in $X$. So there is a point $Q\in \phi^{-1}(W)$ with $f^n(Q)\in \phi^{-1}(V)$. Thus $g^n(\phi(Q))\in V$. Since $\phi(Q)\in W$ we have that $g^n(W)\cap V\neq \emptyset$ and $\OO_g(W)$ is dense in $Y$ as needed.    		\end{proof}
	
\begin{corollary}\label{cor:boundondiff}
Let $f\colon X\ra X$ be a surjective endomorphism of a normal $\QQ$-factorial variety with at worst terminal singularities. Let $\phi\colon X\ra Y$ be a contraction morphism and $g\colon Y\ra Y$ a surjective endomorphism with $g$ surjective and $g\circ \phi=\phi\circ f$. Suppose that $\phi$ is of fibering type. Then
\[\textnormal{Difficulty}(X,f)\leq \dim X-\dim Y+\textnormal{Difficulty}(Y,g)\]	
\end{corollary}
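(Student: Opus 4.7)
The plan is to produce, from a progenitor $W$ of $(Y,g)$ of dimension $k := \textnormal{Difficulty}(Y,g)$ (assumed finite; otherwise the bound is vacuous), an irreducible component $V$ of $\phi^{-1}(W)$ with $\overline{\OO_f(V)} = X$ and $\dim V \leq k + \dim X - \dim Y$; such a $V$ will witness the claimed inequality. The crux is to arrange the dimension count via the generic fiber dimension formula, which requires $W$ to meet the nonempty open set $U \subseteq Y$ on which $\phi$ has fibers of the expected dimension $\dim X - \dim Y$.

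First I would exploit that $g$ is finite by Proposition \ref{prop:surjective=finite}, so each $g^n(W)$ is closed, irreducible, and of dimension $k$. Because $\OO_g(g^n(W))$ is a tail of $\OO_g(W)$, it remains dense in $Y$, so each $g^n(W)$ is itself a progenitor. If every $g^n(W)$ sat inside the proper closed subset $Y \setminus U$, the whole orbit $\OO_g(W)$ would also lie there, contradicting density; so some $g^n(W)$ meets $U$, and replacing $W$ by this iterate I may assume $W \cap U$ is dense open in $W$.

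Next, Lemma \ref{lem:MSfibrationbackbone} gives $\overline{\OO_f(\phi^{-1}(W))} = X$. Decomposing $\phi^{-1}(W) = V_1 \cup \cdots \cup V_m$ into irreducible components and using that the orbit closure of a finite union is the union of orbit closures, irreducibility of $X$ forces some component $V := V_i$ to satisfy $\overline{\OO_f(V)} = X$. The forward-image argument from Lemma \ref{lem:MCbirkey} then yields $\overline{\OO_g(\phi(V))} = Y$; since $\phi$ is proper, $\phi(V)$ is closed and irreducible, contained in $W$, and by minimality of $\dim W$ among generators of $Y$ it must equal $W$. Hence $\phi|_V \colon V \to W$ is a dominant morphism of irreducible varieties, so for any $y \in W \cap U$ the fiber $V \cap \phi^{-1}(y)$ has dimension at least $\dim V - \dim W = \dim V - k$ by the standard lower bound on fiber dimensions, while sitting inside $\phi^{-1}(y)$ of dimension $\dim X - \dim Y$. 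Therefore $\dim V \leq k + \dim X - \dim Y$ as desired.

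The main obstacle is the preparatory step: without iterating $W$ forward by $g$, the progenitor could sit entirely in the exceptional locus of $\phi$ where fibers are too large, and the naive preimage would have dimension strictly exceeding $k + \dim X - \dim Y$. Finiteness of $g$ is what lets us slide $W$ into the good locus without changing its dimension or losing the generator property; once that preparation is in place, the component selection, the identification $\phi(V) = W$, and the generic fiber calculation are essentially formal.
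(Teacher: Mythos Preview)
Your proof is correct and follows the same skeleton as the paper's: pick a progenitor $W$ for $(Y,g)$, invoke Lemma~\ref{lem:MSfibrationbackbone} to get $\overline{\OO_f(\phi^{-1}(W))}=X$, and extract an irreducible component of the right dimension. The paper's version is terser: it simply asserts that since the orbit of the generic point of $W$ is dense, one may assume $\dim\phi^{-1}(W)=\dim X-\dim Y+\dim W$ ``as this is generically the case,'' and then any irreducible component does the job.

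Your argument is more careful at exactly the point where the paper waves its hands. You make the replacement of $W$ by a forward iterate $g^n(W)$ explicit (using finiteness of $g$ to keep dimension and the generator property), which guarantees $W$ actually meets the flat locus $U$. You then add a step the paper omits: using minimality of $\dim W$ to force $\phi(V)=W$ for the chosen component $V$, and bounding $\dim V$ via the fiber-dimension inequality for $\phi|_V$ over a single point $y\in W\cap U$. This is a genuine improvement, since the paper's bound on $\dim\phi^{-1}(W)$ is not obviously justified when $W$ still meets the locus where $\phi$ has jumping fiber dimension; your argument only needs a single good point of $W$, not that $W$ avoids the bad locus entirely. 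The cost is a slightly longer proof; the benefit is that every step is fully justified.
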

\begin{proof}
Let $W$ be a progenitor for $Y$ as in \ref{def:difficulty}.  As $\OO_g(W)$ is dense we have that the orbit of the generic point of $W$ is dense in $Y$. Thus we may assume that 
\[\dim \phi^{-1}(W)=\dim X-\dim Y+\dim W\]
as this is generically the case. By \ref{lem:MSfibrationbackbone} we have that $\OO_f(\phi^{-1}(W))$ is dense in $X$. Therefore one of the irreducible components of $\phi^{-1}(W)$ has a dense orbit in $X$. In other words 
\[\textnormal{Difficulty}(X,f)\leq \dim\phi^{-1}(W)=\dim X-\dim Y+\dim W. \]
Since $W$ is a progenitor for $Y$ with respect to $g$ we have that $\textnormal{Difficulty}(Y,g)=\dim W$ and consequently
\[\textnormal{Difficulty}(X,f)\leq \dim X-\dim Y+\textnormal{Difficulty}(Y,g). \]
\end{proof}
We obtain the following pleasing consequence of the definition of difficulty.
\begin{corollary}\label{cor:divandflipcor}
Let $f\colon X\ra X$ be a surjective endomorphism of a normal $\QQ$-factorial variety with at worst terminal singularities. Suppose that there is an $f$-equivariant MMP	\[\xymatrix{X=X_0\ar@{.>}[r]^{\phi_1}& X_1\ar@{.>}[r]^{\phi_2}&...\ar@{.>}[r]^{\phi_r}& X_r}\]
with respect to morphisms $f_i\colon X_i\ra X_i$. 
\begin{enumerate}
	\item Assume that each $\phi_i$ is divisorial or a flipping contraction and $X_r$ is a minimal model. Then 
	\[\textnormal{Difficulty}(X,f)=\textnormal{Difficulty}(X_r,f_r).\]
In other words, the difficulty can be computed on a minimal model.	
\item Suppose that $\phi_{r}\colon X_{r-1}\ra X_r$ is a Mori-fiber space and each $\phi_i$ for $i<r$ is a divisorial contraction or a flipping contraction. Then 
\[\textnormal{Difficulty}(X,f)\leq \dim X-\dim X_r+\textnormal{Difficulty}(X_r,f_r). \]
	\end{enumerate}
\end{corollary}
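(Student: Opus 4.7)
The plan is to chain together the birational invariance results, namely Lemma \ref{lem:MCbirkey} (divisorial contractions) and Corollary \ref{cor:MCbirkey} (flipping contractions), and then apply Corollary \ref{cor:boundondiff} in the fibering case. The proof is essentially an induction on the length $r$ of the MMP, with the two parts differing only in whether the last step is birational or a Mori-fiber space.

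For part (1), I would argue by induction on $r$. The base case $r=0$ is vacuous since $X = X_r$ and $f=f_r$. For the inductive step, look at the first contraction $\phi_1\colon X_0 \dashrightarrow X_1$. If $\phi_1$ is a divisorial contraction, it is a birational morphism conjugating $f_0$ to $f_1$, and Lemma \ref{lem:MCbirkey} gives
\[
\textnormal{Difficulty}(X_0,f_0) = \textnormal{Difficulty}(X_1,f_1).
\]
If $\phi_1$ is instead a flipping contraction in the sense of Definition \ref{def:mmpdef}, then $X_1$ is the flip and the morphism $f_1$ exists by the inclusion of flips inside the definition of $f$-equivariant MMP (compare Lemma \ref{lem:extensionlem2}); Corollary \ref{cor:MCbirkey} then yields the same equality. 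In either case, the remaining tail $X_1 \dashrightarrow \cdots \dashrightarrow X_r$ is an $f_1$-equivariant MMP of length $r-1$ in which every step is divisorial or flipping, and the inductive hypothesis gives $\textnormal{Difficulty}(X_1,f_1) = \textnormal{Difficulty}(X_r,f_r)$. Composing these two equalities finishes part (1).

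For part (2), I apply part (1) to the truncated MMP
\[
\xymatrix{X=X_0 \ar@{.>}[r]^-{\phi_1} & X_1 \ar@{.>}[r]^-{\phi_2} & \cdots \ar@{.>}[r]^-{\phi_{r-1}} & X_{r-1}},
\]
all of whose steps are divisorial or flipping by hypothesis, to obtain
\[
\textnormal{Difficulty}(X,f) = \textnormal{Difficulty}(X_{r-1},f_{r-1}).
\]
Since every birational step preserves the dimension, $\dim X_{r-1} = \dim X$. Now $\phi_r\colon X_{r-1} \to X_r$ is a Mori-fiber space (a contraction of fibering type) intertwining $f_{r-1}$ and $f_r$, so Corollary \ref{cor:boundondiff} applies and gives
\[
\textnormal{Difficulty}(X_{r-1},f_{r-1}) \leq \dim X_{r-1} - \dim X_r + \textnormal{Difficulty}(X_r,f_r).
\]
Substituting the previous equality and $\dim X_{r-1}=\dim X$ yields the desired bound.

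There is no serious obstacle here: both parts are formal consequences of the three lemmas already proved in this section, and the only point requiring attention is the bookkeeping around flipping contractions, where one must remember that the morphism $f_i$ on the flipped variety $X_i$ is precisely the data packaged into an $f$-equivariant MMP by Definition \ref{def:mmpdef}, so Corollary \ref{cor:MCbirkey} can be invoked without extra hypotheses. Everything else is a direct assembly of these earlier results.
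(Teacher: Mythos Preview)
Your proposal is correct and follows essentially the same route as the paper: chain Lemma~\ref{lem:MCbirkey} and Corollary~\ref{cor:MCbirkey} along the birational steps to get equality of difficulties, then invoke Corollary~\ref{cor:boundondiff} at the Mori-fiber step and use $\dim X_{r-1}=\dim X$. The only cosmetic wrinkle is that in part~(2) you cite ``part~(1)'' for the truncated MMP even though $X_{r-1}$ need not be a minimal model; the paper sidesteps this by saying ``by the above argument'' instead, since the minimal-model hypothesis plays no role in the chain of equalities.
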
		
\begin{proof}
 Assume that each $\phi_i$ is divisorial or a flipping contraction and $X_r$ is a minimal model. Then by \ref{lem:MCbirkey} and \ref{cor:MCbirkey} the difficulty is preserved by divisorial and flipping contractions. Thus $\textnormal{Difficulty}(X,f)=\textnormal{Difficulty}(X_r,f_r)$ as needed. Now suppose that $\phi_{r}\colon X_{r-1}\ra X_r$ is a Mori-fiber space and each $\phi_i$ for $i<r$ is a divisorial contraction or a flipping contraction. By the above argument we have that $\textnormal{Difficulty}(X,f)=\textnormal{Difficulty}(X_{r-1},f_{r-1})$. By \ref{cor:boundondiff} we have that
 \[\textnormal{Difficulty}(X_{r-1},f_{r-1})\leq \dim X_{r-1}-\dim X_r+\textnormal{Difficulty}(X_{r},f_{r}).\]
 Since the $\phi_i$ are birational for $i<r$ we have that $\dim X=\dim X_{r-1}$ and so
 \begin{align*}
 &\textnormal{Difficulty}(X,f)=\textnormal{Difficulty}(X_{r-1},f_{r-1})\\&\leq \dim X_{r-1}-\dim X_r+\textnormal{Difficulty}(X_{r},f_{r})\\&= \dim X-\dim X_r+\textnormal{Difficulty}(X_{r},f_{r}) 	
\end{align*} 
as claimed. 
\end{proof}		
The above corollary shows that the notion of difficulty is reasonably well behaved in the presence of an equivariant MMP. We now return to the Medvedev-Scanlon conjecture.  		
		\begin{lemma}\label{lem:rationalfiblem}
			Let $f\colon X\ra X$ be a surjective endomorphism of a normal $\QQ$-factorial variety with at worst terminal singularities. Suppose that $f$ does not preserve a rational fibration. If $f$ admits an equivariant MMP
			\[\xymatrix{X=X_0\ar@{.>}[r]^{\phi_1}& X_1\ar@{.>}[r]^{\phi_2}&...\ar@{.>}[r]^{\phi_r}& X_r}\]
			then $f_i\colon X_i\ra X_i$ does not preserve a rational fibration
		\end{lemma}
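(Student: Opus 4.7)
The plan is to prove the contrapositive: if some $f_i\colon X_i\ra X_i$ preserves a rational fibration, then so does $f$. Suppose that there is a positive dimensional variety $Z$ and a dominant rational map $\psi_i\colon X_i\dashrightarrow Z$ with $\psi_i\circ f_i=\psi_i$. The idea is to pull this fibration back to $X$ along the composed contraction $\pi_i=\phi_i\circ\phi_{i-1}\circ\cdots\circ\phi_1\colon X\dashrightarrow X_i$ and check that the resulting map is still $f$-invariant.

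First I would verify that $\pi_i$ is a well-defined dominant rational map. Each $\phi_j$ is either a divisorial contraction, a flipping-type birational map, or a fibering contraction, and in each case is dominant; the composition of dominant rational maps of integral varieties is a dominant rational map, so $\pi_i$ is well defined. Next I would upgrade the commutation relations $\phi_j\circ f_{j-1}=f_j\circ\phi_j$ (which hold as equalities of rational maps by the definition of an $f$-equivariant MMP given in Definition \ref{def:mmpdef}) to the composed identity
\[
\pi_i\circ f=f_i\circ \pi_i,
\]
which follows by iterated substitution: $\pi_i\circ f=\phi_i\circ\cdots\circ\phi_1\circ f=\phi_i\circ\cdots\circ\phi_2\circ f_1\circ\phi_1=\cdots=f_i\circ\phi_i\circ\cdots\circ\phi_1=f_i\circ\pi_i$.

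Now set $\psi=\psi_i\circ \pi_i\colon X\dashrightarrow Z$. This is a dominant rational map to a positive dimensional variety (dominance follows because both factors are dominant). Using $\psi_i\circ f_i=\psi_i$ and the intertwining relation we get
\[
\psi\circ f=\psi_i\circ\pi_i\circ f=\psi_i\circ f_i\circ\pi_i=\psi_i\circ\pi_i=\psi,
\]
which exhibits $\psi$ as an $f$-invariant rational fibration on $X$. This contradicts the hypothesis that $f$ does not preserve a rational fibration, proving the claim.

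The only nontrivial point, which I would flag as the main obstacle, is making sure that these compositions of rational maps are honestly well defined on some dense open subset of $X$ so the identities are meaningful. Because each $\phi_j$ is dominant and each $f_j$ is a surjective morphism, all of the compositions above are generically defined on dense opens, and the intertwining relations hold on the dense open locus where everything in sight is defined; this is enough to give equality as rational maps, which is what the definition of ``fiber preserving'' requires.
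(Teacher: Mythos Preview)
Your proof is correct and follows the same approach as the paper: both prove the contrapositive by composing the assumed $f_i$-invariant fibration $\psi_i\colon X_i\dashrightarrow Z$ with the dominant rational map $X\dashrightarrow X_i$ and using the commutation relations from the equivariant MMP to see that the result is $f$-invariant. The paper's argument is simply the one-line observation ``this follows from the fact that all the morphisms commute'' together with the corresponding diagram; your write-up spells out the chain of substitutions and the well-definedness of the compositions more carefully, but the content is identical.
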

		\begin{proof}
			This follows from the fact that all the morphisms commutes. In particular if $f_i$ preserves a rational fibration then we have a diagram
			\[\xymatrix{X\ar[rr]^f\ar@{.>}[d] && X\ar@{.>}[d]\\ X_i\ar[rr]_{f_i}\ar@{.>}[dr]&&X_i\ar@{.>}[dl]\\ & Z &}\]
			This contradicts the fact that $f$ does not preserve a rational fibration. 
		\end{proof}
		\begin{theorem}
			Let $f\colon X\ra X$ be a surjective endomorphism of a normal $\QQ$-factorial variety with at worst terminal singularities. Suppose that $f$ admits an equivariant MMP
			\[\xymatrix{X=X_0\ar@{.>}[r]^{\phi_1}& X_1\ar@{.>}[r]^{\phi_2}&...\ar@{.>}[r]^{\phi_r}& X_r}\]
			where $\phi_i$ is a divisorial or flipping contraction for $i<r-1$ and $\phi_{r}\colon X_{r-1}\ra X_r$ is a Mori-fiber space. Suppose that $\phi_{r-1}$ has relative difficulty $0$. If the Medvedev-Scanlon conjecture holds for $X_r$ then it holds for $X$.
			\end{theorem}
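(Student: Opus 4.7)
The plan is to chase the conclusion down the MMP, converting a dense orbit point on $X_r$ into one on $X$. First I would reduce to the non--trivial case: if $f$ preserves a rational fibration, there is nothing to prove, so assume it does not. By Lemma \ref{lem:rationalfiblem} the induced morphism $f_r\colon X_r\ra X_r$ also does not preserve a rational fibration. The Medvedev--Scanlon conjecture for $X_r$ therefore supplies a point $y_0\in X_r$ with $\overline{\OO_{f_r}(y_0)}=X_r$; equivalently, $\textnormal{Difficulty}(X_r,f_r)=0$.

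The key step is to transport this dense orbit point across the Mori fiber space $\phi_r\colon X_{r-1}\ra X_r$ using the relative difficulty $0$ hypothesis (which I read as placed on the fibering contraction $\phi_r$). Let $W\subseteq X_r$ be the non--empty open set provided by the relative difficulty $0$ condition for $\phi_r$. Because $\OO_{f_r}(y_0)$ is Zariski dense in $X_r$ and $W$ is open, some iterate $y_0^\pp=f_r^n(y_0)$ lies in $W$; the orbit of $y_0^\pp$ is cofinite in that of $y_0$ and hence still dense in $X_r$. Set $V=\overline{\OO_{f_{r-1}}(\phi_r^{-1}(y_0^\pp))}$. By relative difficulty $0$ the variety $V$ has $D$-difficulty $0$, so there is a point $P\in X_{r-1}$ with $\overline{\OO_{f_{r-1}}(P)}=V$. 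On the other hand, applying Lemma \ref{lem:MSfibrationbackbone} to the closed subvariety $\{y_0^\pp\}\subseteq X_r$, whose $f_r$-orbit is dense, shows that $\OO_{f_{r-1}}(\phi_r^{-1}(y_0^\pp))$ is dense in $X_{r-1}$, i.e.\ $V=X_{r-1}$. Therefore $P$ has dense forward orbit under $f_{r-1}$, so $\textnormal{Difficulty}(X_{r-1},f_{r-1})=0$.

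Finally I would propagate the dense orbit up to $X$ across the birational part of the MMP. Each $\phi_i$ for $i<r$ is either a divisorial contraction or (the flipping birational map associated to) a flipping contraction, so by Lemma \ref{lem:MCbirkey} and Corollary \ref{cor:MCbirkey} each step preserves the $D$-difficulty. Iterating back along the MMP yields $\textnormal{Difficulty}(X,f)=\textnormal{Difficulty}(X_{r-1},f_{r-1})=0$, which produces the desired point of $X$ with dense forward $f$-orbit and establishes the Medvedev--Scanlon conjecture for $X$.

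The main subtle point, and the one on which the argument pivots, is the transfer of a dense orbit from $X_r$ to $X_{r-1}$: the relative difficulty $0$ condition only guarantees a dense-orbit point inside fibers lying over the open set $W\subseteq X_r$, so one must replace $y_0$ by a forward iterate $y_0^\pp\in W$ (using density of the $f_r$-orbit) before applying the hypothesis. The rest of the argument is essentially a bookkeeping exercise built on the invariance of difficulty under divisorial contractions and flips established earlier in the excerpt.
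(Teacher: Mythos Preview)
Your proof is correct and follows essentially the same approach as the paper's own proof: reduce to the non--fibration--preserving case, push the dense orbit point down to $X_r$ via Lemma~\ref{lem:rationalfiblem} and the Medvedev--Scanlon hypothesis, lift it to $X_{r-1}$ using Lemma~\ref{lem:MSfibrationbackbone} together with the relative difficulty $0$ assumption, and then transport the difficulty back to $X$ through the birational steps via Lemma~\ref{lem:MCbirkey} and Corollary~\ref{cor:MCbirkey}. Your extra care in replacing $y_0$ by a forward iterate $y_0'$ lying in the open set $W$ from the relative difficulty definition is a point the paper's proof glosses over, and your reading of the hypothesis as being on $\phi_r$ rather than $\phi_{r-1}$ is the intended one (the statement contains an indexing slip).
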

		\begin{proof}
		If $f$ preserves a rational fibration then there is nothing to show. Suppose that $f$ does not preserve a rational fibration. Then $f_r\colon X_r\ra X_r$ does not preserve a rational fibration by \ref{lem:rationalfiblem} and the assumption that $X_r$ satisfies the Medvedev-Scanlon conjecture we have that $f_r$ has a point with dense forward orbit. By \ref{lem:MSfibrationbackbone} we have that $f_{r-1}$ has a fiber $\phi_{r}^{-1}(y)$ with a dense orbit. By assumption $\phi_{r}$ has relative difficulty $0$ and so 
			\[\textnormal{Difficulty}(\OO_{f_{r-1}}(\phi_{r}^{-1}(y)),f_{r-1})\leq 0.\]
			Since the orbit is dense we have that $\textnormal{Difficulty}(X_{r-1},f_{r-1})=0$ and there is a dense orbit of a point under $f_{r-1}$.  Applying \ref{lem:MCbirkey} and \ref{cor:MCbirkey} we obtain that $f$ has a dense forward orbit as needed.
			\end{proof}	
		This leads to the following question:
		\begin{question}
			\normalfont Consider a diagram
			\begin{equation}
				\xymatrix{X\ar[r]^f\ar[d]_\phi& X\ar[d]^\phi\\ Y\ar[r]_g & Y}
			\end{equation}
			where $f$ is surjective and $\phi\colon X\ra Y$ a Mori-fiber space. Under what conditions do we have that the relative difficulty of $f$ with respect to $\phi$ is at most zero. In other words when is it the case that orbits of fibers of $\phi$ under $f$ can always be generated by a point. To study this situation fix a point $p\in Y$ and consider $Z_p=\overline{\OO_g(p)}$ and $V_p=\overline{\OO_f(\phi^{-1}(p))}$. Then we have a diagram
			\[\xymatrix{V_p\ar[r]^f\ar[d]_\phi& V_p\ar[d]^\phi\\ Z_p\ar[r]_g & Z_p}\]
			In this situation $Z_p$ now has a canonical dense orbit and $V_p$ is built out of the fibers of a fibering contraction and we look for a dense orbit of a point under $f$. Since $g$ cannot preserve a rational fibration, neither can $f\colon V_p\ra V_p$ so the Medvedev-Scanlon conjecture predicts that we can always find a point with dense orbit for $f$ in this situation. 
			\end{question}
	We now turn to the case that $X$ admits an int-amplified endomorphism. We give a reduction to two special cases which represent the current bottleneck for the Medvedev-Scanlon conjecture for varieties admitting an int-amplified endomorphism. 
\begin{theorem}\label{thm:intamproadblock}
			 Assume the following.
			 \begin{enumerate}
				\item The Medvedev-Scanlon conjecture holds for surjective endomorphisms of all $Q$-abelian varieties.
				\item  Consider a diagram
				\begin{equation}
					\xymatrix{X\ar[r]^f\ar[d]_\phi& X\ar[d]^\phi\\ Y\ar[r]_g & Y}
				\end{equation}
				where we assume the following:
				\begin{enumerate}
					\item $X$ is a normal projective $\QQ$-factorial variety with at worst terminal singularities.
					\item $X$ admits an int-amplified endomorphism.
					\item $\phi\colon X\ra Y$ is a Mori-fiber space and $f$ is surjective.
					\end{enumerate}
			Then $f$ has relative difficulty $0$ with respect to $\phi$.
			\end{enumerate} Then the Medvedev-Scanlon conjecture holds for surjective endomorphisms of  $\QQ$-factorial normal projective varieties with at worst terminal singularities that admit an int-amplified endomorphism.
		\end{theorem}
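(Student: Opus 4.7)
The plan is to run the Meng--Zhang equivariant minimal model program on a suitable iterate of $f$ and induct on the length $r$ of that MMP. Replacing $f$ by an iterate $f^n$ is harmless: a dense forward orbit for $f^n$ produces a dense forward orbit for $f$ and vice versa (using irreducibility of $X$ to decompose the orbit into $n$ residue classes), while by Lemma \ref{conj:weakM-S} the property of not preserving a rational fibration is insensitive to iteration. Assuming $f$ is not fiber preserving, Theorem \ref{thm:MengZhangMMP} furnishes, after passing to such an iterate, a tractable $f$-equivariant MMP
\[
\xymatrix{X = X_0 \ar@{.>}[r]^{\phi_1} & X_1 \ar@{.>}[r]^{\phi_2} & \cdots \ar@{.>}[r]^{\phi_r} & X_r}
\]
with $X_r$ a $Q$-abelian variety. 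By passing to a further common power, I would arrange that the same sequence of contractions is simultaneously equivariant for a fixed int-amplified endomorphism $h$ of $X$. Then Theorem \ref{thm:propertiesofintam} parts 1 and 2 supply an int-amplified endomorphism on each $X_i$, while $\QQ$-factoriality and terminality propagate through divisorial contractions, flips, and Mori fiber contractions by standard MMP theory. Thus each $X_i$ satisfies the full list of hypotheses of the theorem.

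I would then induct on $r$. The base case $r = 0$ is precisely that $X$ itself is $Q$-abelian and is handled by assumption (1). For the inductive step, the tail $X_1 \dashrightarrow \cdots \dashrightarrow X_r$ is a tractable $f_1$-equivariant MMP of length $r - 1$ on a variety $X_1$ meeting all hypotheses, and $f_1$ is again not fiber preserving by Lemma \ref{lem:rationalfiblem}. The inductive hypothesis therefore produces a point $y \in X_1$ with dense $f_1$-orbit.

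It then remains to lift this to a dense $f_0$-orbit on $X_0$ by case analysis on the type of $\phi_1$. If $\phi_1$ is divisorial, Lemma \ref{lem:MCbirkey} yields $\textnormal{Difficulty}(X_0, f_0) = \textnormal{Difficulty}(X_1, f_1) = 0$; if $\phi_1$ is flipping, Corollary \ref{cor:MCbirkey} yields the same conclusion. If $\phi_1$ is a Mori fiber contraction, I replace $y$ by an iterate $f_1^k(y)$, still with dense orbit, chosen to lie in the open set $W \subseteq X_1$ appearing in the definition of relative difficulty; such a $k$ exists because $\OO_{f_1}(y)$ is dense. Lemma \ref{lem:MSfibrationbackbone} now gives $\overline{\OO_{f_0}(\phi_1^{-1}(y))} = X_0$, and assumption (2), applicable because $X_0$ is $\QQ$-factorial terminal and admits an int-amplified endomorphism and $\phi_1$ is a Mori fiber space, forces this orbit closure to have difficulty zero. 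Hence some single point of $X_0$ has dense $f_0$-orbit, completing the induction.

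The main technical obstacle is ensuring the int-amplified hypothesis propagates to every $X_i$ in a way compatible with $f$-equivariance, because the equivariant MMPs of $f$ and of a chosen int-amplified $h$ need not agree a priori. The way around this is to invoke Theorem \ref{thm:MengZhangMMP} (particularly the freedom afforded by its parts 1 and 3, together with Lemma \ref{lem:extensionlem2} on flip extensions) to run a \emph{single} MMP that, after passing to a common iterate, is equivariant for both $f$ and $h$ at every step, so that Theorem \ref{thm:propertiesofintam} is available stage by stage to certify the int-amplified property of the descents of $h$. A secondary but easy point is the genericity of the lifting point $y$ in the Mori fiber case, which is resolved by simply iterating $y$ forward into the prescribed open set $W$.
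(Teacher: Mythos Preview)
Your argument is correct and follows the same overall strategy as the paper---run the Meng--Zhang equivariant MMP, induct, and lift a dense orbit step by step using assumption (1) at the base and assumption (2) at each Mori fiber step---but the organization differs in two noteworthy ways.

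First, you induct on the length $r$ of the equivariant MMP, whereas the paper splits into the cases $\kappa(X)=0$ (all steps birational, terminating in a positive-dimensional $Q$-abelian variety) and $\kappa(X)<0$, handling the latter by induction on the Picard number. Your scheme is arguably cleaner: it treats flips uniformly (the length drops even though $\rho$ does not) and avoids the separate Kodaira-dimension case analysis, since a length-zero MMP is exactly the $Q$-abelian base case regardless of $\kappa$.

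Second, you arrange up front for a single MMP that is simultaneously equivariant for $f$ and for a fixed int-amplified $h$, so that every intermediate $X_i$ visibly inherits an int-amplified endomorphism. The paper instead descends $h$ only one step at a time, at the first fibering contraction, by invoking part (1) of Theorem~\ref{thm:MengZhangMMP} to replace $h$ by a power fixing the relevant extremal ray and then checking directly via eigenvalues that the descent remains int-amplified. The paper's local approach is lighter---you never need to coordinate powers across the whole sequence---while your global setup buys a cleaner inductive statement. Both are valid; your acknowledged ``main technical obstacle'' is genuinely resolvable via part (1) of Theorem~\ref{thm:MengZhangMMP} and Lemma~\ref{lem:extensionlem2} applied stage by stage, but note that you could also simply sidestep it by descending $h$ one step at a time as the paper does. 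Your care in pushing the dense-orbit point $y$ into the open set $W$ before applying relative difficulty zero is a detail the paper leaves implicit.
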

		\begin{proof}
			Let $f\colon X\ra X$ be a surjective endomorphism. Suppose that $f$ does not preserve a rational fibration. Since $X$ admits an int-amplified endomorphism by \ref{thm:MengZhangMMP} we have an $f^n$ equivariant 
		\[X=X_1\dashrightarrow X_2\dashrightarrow X_3\dashrightarrow\dots\dashrightarrow X_r\] where $X_r$ is a $Q$-abelian variety. Since $f$ does not preserve a rational fibration, $\kappa(X)\leq 0$ as otherwise $f$ preserves the Iitaka fibration. Suppose first that $\kappa(X)=0$. Since $\kappa(X)=0$ we have that no fibering contractions occur and that $\dim X_r>0$. Since $f$ preserves no rational fibration neither does $f_{r}$ and so as we have assumed the Medvedev-Scanlon conjecture for $Q$-abelian varieties we have that $f_{r}$ has a point with dense orbit. As all the other contractions are birational we may apply \ref{lem:MCbirkey} and \ref{cor:MCbirkey} repeatedly to obtain that $f^n$ and so $f$ has a dense orbit as required. Now suppose that $\kappa(X)<0$. We now induct on the Picard number of $X$. If $X$ has Picard number 1 then the equivariant MMP exhibits $X$ as a Mori-fiber space over a point after a finite sequence of flips. By \ref{cor:MCbirkey} we may assume that $f$ is a Mori-fiber space over a point. In other words we have a diagram \[\xymatrix{X\ar[r]^{f^n}\ar[d]_\phi& X\ar[d]^\phi\\ Y\ar[r]_g & Y}\] where $Y$ is a point and $g$ is the identity. By assumption (2) we have that $f$ has relative difficulty $0$ with respect to $\phi$. As any point has difficulty $0$ we have that $f$ has a dense orbit as required. Now suppose that the Picard number of $X$ is larger then one. We have a diagram
			\[\xymatrix{X_1\ar[r]^{f^n}\ar@{.>}[d]_{\phi_1}& X_2\ar@{.>}[d]^{\phi_1}\\ X_2\ar[r]_{f_2} & X_2}.\]
			If $\phi_1$ is not a fibering type contraction then by induction we have that the Medvedev-Scanlon conjecture holds for $f_2$. Since $\phi$ is birational we have by \ref{lem:MCbirkey} and \ref{cor:MCbirkey} that $f^n$ has a point with dense orbit and therefore so does $f$. We may now assume that $\phi_1$ is a Mori-fiber space. Now consider the diagram
			\[\xymatrix{X_1\ar[r]^{f^n}\ar[d]_{\phi_1}& X_2\ar[d]^{\phi_1}\\ X_2\ar[r]_{f_2} & X_2}.\]
			Then $X_2$ is normal and $\QQ$-factorial with at worst terminal singularities. Furthermore, $X_2$ admits an int-amplified endomorphism. To see this note that by assumption $X$ admits an int-amplified endomorphism, say $h$. Then there is some $m$ such that $\phi_1\circ h^m=h_2\circ \phi_1$ where $h_2\colon X_2\ra X_2$ is a surjective endomorphism. Then if $\lambda$ is an eigenvalue of $h_2^*\colon N^1(X_2)_\RR\ra N^1(X_2)_\RR$ we have that $\lambda$ is an eigenvalue of $h^m$. Recall that a surjective endomorphism is int-amplified if and only if every eigenvalue has absolute value strictly greater then one. So $\vert \lambda\vert=\vert \mu\vert ^m$ where $\mu$ is an eigenvalue of $h^*$. Since $\vert \mu\vert>1$ we have $\vert \mu\vert^n=\vert \lambda \vert>1$ and $h_2$ is int-amplified. Therefore $X_2$ is a normal $\QQ$-factorial variety with at worst terminal singularities that admits an int-amplified endomorphism. Furthermore, $f_2$ does not preserve a rational fibration as otherwise $f$ would as well by \ref{lem:rationalfiblem}. If $\kappa(X_2)= 0$ then by our earlier argument the Medvedev-Scanlon conjecture holds for $X_2$. On the other hand if $\kappa(X_2)<0$ then $X_2$ satisfies the inductive hypothesis and so the  Medvedev-Scanlon holds for $X_2$ in all cases. In particular, $f_2$ has a point with dense orbit. By assumption (2) we know that the difficulty of $f$ relative to $\phi_1$ is zero. Since $f_2$ has a point with dense orbit we have that $f^n$ has a fiber with a dense orbit. As the relative difficulty is zero this means that $f^n$ has a dense orbit by a point and so does $f$. Thus $f$ satisfies the Medvedev-Scanlon conjecture.  
		  
\end{proof}

Theorem \ref{thm:intamproadblock} shows that a possible attack on the Medvedev-Scanlon conjecture for varieties is to first prove the conjecture for $Q$-abelian varieties. Then verify that Mori-fiber spaces have relative difficulty zero. While this may seem daunting, showing that $Q$-abelian varieties satisfy the Medvedev-Scanlon conjecture would give partial results on the difficulty of $f$. The proof gives the following.

\begin{corollary}
Assume that the Medvedev-Scanlon conjecture holds for surjective endomorphisms of all $Q$-abelian varieties. Then the Medvedev-Scanlon conjecture holds for all $\QQ$-factorial normal projective varieties with at worst terminal singularities that satisfy the following two conditions:
\begin{enumerate}
	\item $X$ admits an int-amplified endomorphism.
	\item $\kappa(X)= 0$. 
\end{enumerate}
\end{corollary}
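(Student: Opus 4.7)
The plan is to observe that this corollary is essentially the Kodaira dimension zero sub-case of the proof strategy used for Theorem \ref{thm:intamproadblock}, and that in this sub-case the second (and more restrictive) hypothesis about relative difficulty of Mori-fiber spaces never gets invoked. So the proof will extract the $\kappa(X)=0$ branch of the argument, which requires only the assumption on $Q$-abelian varieties.

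First I would reduce to the non-trivial case: if $f$ preserves a rational fibration, the conclusion of the Medvedev--Scanlon conjecture holds vacuously, so assume $f$ does not preserve a rational fibration. Next, since $X$ admits an int-amplified endomorphism, I apply Theorem \ref{thm:MengZhangMMP}(2) to obtain, after replacing $f$ by some iterate $f^n$, an $f^n$-equivariant MMP
\[
X = X_0 \dashrightarrow X_1 \dashrightarrow \cdots \dashrightarrow X_r
\]
with each $\phi_i$ a divisorial, flipping, or fibering contraction, and $X_r$ a $Q$-abelian variety. The key structural input is then that $\kappa(X)=0$ forces every $\phi_i$ in this MMP to be birational: a fibering contraction is a Mori-fiber space, whose total space has $\kappa = -\infty$, which is incompatible with $\kappa(X_{i-1})=0$ (and Kodaira dimension is preserved by divisorial contractions and flips among $\QQ$-factorial terminal varieties). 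Therefore only divisorial and flipping contractions appear, and in particular $\dim X_r = \dim X > 0$.

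Once I have this, Lemma \ref{lem:rationalfiblem} transfers the non-preservation of rational fibrations from $f$ down to $f_r \colon X_r \to X_r$. Since $X_r$ is a $Q$-abelian variety, the standing hypothesis of the corollary gives a point $p \in X_r(\Qb)$ with dense forward orbit under $f_r$. I then pull this dense orbit back up the MMP step by step: Lemma \ref{lem:MCbirkey} handles each divisorial contraction and Corollary \ref{cor:MCbirkey} handles each flipping step, both preserving the property of having a dense-orbit point (equivalently, the difficulty being zero). This yields a point with dense $f^n$-orbit on $X$, and hence a point with dense $f$-orbit, as a dense orbit for $f^n$ is a fortiori dense for $f$.

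The only genuinely non-formal step is the claim that $\kappa(X)=0$ rules out any fibering contraction appearing in the equivariant MMP; the rest is bookkeeping using the already-proved birational lemmas. I expect this to be the main technical point to state carefully, invoking that Kodaira dimension is a birational invariant on $\QQ$-factorial terminal varieties and that the general fiber of a Mori-fiber space is a Fano variety of Kodaira dimension $-\infty$, so the easy addition-type inequality $\kappa(X_{i-1}) \leq \dim(\text{fiber of }\phi_i) \cdot (-\infty) + \kappa(X_i)$ (interpreted correctly) forces the output of a fibering contraction to have $\kappa = -\infty$, contradicting $\kappa(X_{i-1}) = \kappa(X) = 0$. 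Everything else is an application of the lemmas already established in this section.
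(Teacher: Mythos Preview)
Your proposal is correct and follows exactly the paper's approach: the corollary is stated immediately after Theorem~\ref{thm:intamproadblock} with the remark that ``the proof gives the following,'' and the $\kappa(X)=0$ branch of that proof is precisely what you have written out. Your additional justification for why no fibering contraction can occur (via uniruledness of a Mori-fiber space and birational invariance of Kodaira dimension) makes explicit what the paper asserts in one sentence, but the logical structure is identical.
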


\end{section}
	
\begin{section}{Automorphisms of positive entropy and the Kawaguchi Silverman conjecture.}\label{sec:automorphisms}
		
In this final section we illustrate how the MMP can be used to obtain results in the Kawaguchi-Silverman conjecture. We will focus on automorphisms of varieties with finitely generated Nef cones. We first discuss some easy reductions to the Kawaguchi-Silverman conjecture for automorphisms that surprisingly seem to have not appeared in the literature, but are relatively easy. Recall that for any projective variety we have an exact sequence \[0\ra \Aut^0(X)\ra\Aut(X)\ra \pi_0\Aut(X)\ra 0. \] Here $\Aut^0(X)$ is the connected component of the identity element of $\Aut(X)$ and is a smooth connected algebraic group. See section one of \cite{autBrion} for an introduction to these notions. Thus $\pi_0\Aut(X)$ in part measures how far $\Aut(X)$ is from being a smooth connected algebraic group.
		\begin{lemma}[2.8 in \cite{autBrion}]
			Let $X$	be a projective variety. Then $\Aut^0(X)$ acts trivially on $N^1(X)_\RR$ by pull back.
			\end{lemma}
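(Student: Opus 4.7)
The plan is to realize the pull-back assignment $g \mapsto g^*L$ as a genuine algebraic family of line bundles on $X$ parametrized by $\Aut^0(X)$ itself, and then use that numerical equivalence is coarser than algebraic equivalence. First I would set up the action morphism $\mu \colon \Aut^0(X) \times X \ra X$, and for a fixed $L \in \Pic(X)$ form the line bundle $\McL := \mu^*L$ on $\Aut^0(X) \times X$. The key observation is that for each closed point $g \in \Aut^0(X)$, the restriction $\McL\vert_{\{g\} \times X}$ is canonically isomorphic to $g^*L$, so the collection $\{g^*L\}_{g \in \Aut^0(X)}$ is an algebraic family of line bundles on $X$ parametrized by the base $\Aut^0(X)$.

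Next I would use that $\Aut^0(X)$ is connected (it is the identity component by definition) and smooth, hence irreducible as a variety. Two line bundles on $X$ that occur as fibers over closed points of a connected parameter scheme are by definition algebraically equivalent; in particular, for every $g \in \Aut^0(X)$, taking the identity $e$ as a reference point yields
\[
g^*L \;\equiv_{\textnormal{alg}}\; e^*L \;=\; L.
\]

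Finally I would invoke the standard fact that algebraic equivalence implies numerical equivalence: algebraically equivalent divisors have identical intersection numbers against every curve class, so they define the same element of $N^1(X)_\RR$. This yields $g^*L \Num L$ for every $g \in \Aut^0(X)$ and every $L \in \Pic(X)_\RR$, which is precisely the claim that $\Aut^0(X)$ acts trivially on $N^1(X)_\RR$ by pullback.

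There is no serious obstacle here; the only point requiring minor care is the verification that the family produced by $\mu^*L$ realizes the claimed algebraic equivalences, which reduces to recalling the definition of algebraic equivalence together with the connectedness (and hence irreducibility) of the smooth algebraic group $\Aut^0(X)$.
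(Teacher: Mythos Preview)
Your proof is correct and is essentially the same argument as the paper's, just packaged slightly differently: the paper phrases it via the morphism $t_L\colon \Aut(X)\to \Pic(X)$, $g\mapsto g^*L\otimes L^{-1}$, and uses that the connected component $\Aut^0(X)$ must land in $\Pic^0(X)$, while you work directly with the underlying family $\mu^*L$ on $\Aut^0(X)\times X$ and invoke the definition of algebraic equivalence. The morphism $t_L$ in the paper is precisely the classifying map of your family, so the two arguments are two faces of the same coin.
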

		\begin{proof}
			Fix a line bundle $L$. Then we have a morphism
			\[t_L\colon \Aut(X)\ra \Pic^0(X)\]
			given by 
			\[g\mapsto g^*L\otimes L^{-1}\]
			This defines a morphism of schemes $\Aut(X)\ra \Pic(X)$. Since $t_L(\textnormal{iden}_X))=\OO_X$ it takes the connected component of the identity of $\Aut(X)$ to the connected component of the identity of $\Pic^0(X)$. In other words we have that $g^*L\otimes L^{-1}\in \Pic^0(X)$ when $g\in \Aut^0(X)$. Since $L$ was arbitrary we have that
			\[g^* L\Num L\]
			for any line bundle $L$. Thus $\Aut^0(X)$ acts trivially on $N^1(X)$. as needed.
			\end{proof}
		
		\begin{corollary}
			Let $X$ be a projective variety and $g\colon X\ra X$ an automorphism. Then $\lambda_1(g)$ only depends on the equivalence class of $g$ in $\pi_0\Aut(X)$
		\end{corollary}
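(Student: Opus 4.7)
The plan is to combine the preceding lemma (that $\Aut^0(X)$ acts trivially on $N^1(X)_\RR$ by pull back) with the characterization of the first dynamical degree as the spectral radius of $g^*$ acting on $N^1(X)_\RR$ (Proposition \ref{prop:dyndegproperties}(2)). The reduction is essentially algebraic: moving $g$ within its coset of $\Aut^0(X)$ should not change the induced linear operator on $N^1(X)_\RR$, hence should not change its spectral radius.

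Concretely, suppose $g_1,g_2 \in \Aut(X)$ represent the same class in $\pi_0\Aut(X) = \Aut(X)/\Aut^0(X)$. Then I would write $g_2 = g_1 \circ h$ for some $h \in \Aut^0(X)$. Taking pull backs on $N^1(X)_\RR$ yields
\[
g_2^* = (g_1 \circ h)^* = h^* \circ g_1^*.
\]
By the previous lemma, $h^*$ acts as the identity on $N^1(X)_\RR$, so $g_2^* = g_1^*$ as endomorphisms of $N^1(X)_\RR$. In particular they have the same spectral radius. Applying Proposition \ref{prop:dyndegproperties}(2), which identifies $\lambda_1(g_i)$ with $\rho(g_i^*)$, gives $\lambda_1(g_1)=\lambda_1(g_2)$, which is exactly what is claimed.

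There is no real obstacle here; the only points that deserve a moment's care are checking that the equality $g_2 = g_1 \circ h$ is indeed the correct way to express that $g_1,g_2$ share a coset (this is just the definition of the quotient $\pi_0\Aut(X)$), and noting that the functoriality $(g_1\circ h)^* = h^* \circ g_1^*$ transports the triviality of $h^*$ on $N^1(X)_\RR$ into an equality of the full operators $g_1^*$ and $g_2^*$. Everything else is a direct invocation of the two previously recorded facts.
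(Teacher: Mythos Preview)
Your proposal is correct and follows essentially the same argument as the paper: write $g_2=g_1 h$ with $h\in\Aut^0(X)$, use the preceding lemma to get $h^*=\iden$ on $N^1(X)_\RR$, hence $g_2^*=g_1^*$, and conclude via the identification $\lambda_1(g)=\rho(g^*)$ from Proposition \ref{prop:dyndegproperties}(2). There is no meaningful difference between the two proofs.
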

		\begin{proof}
			If $g$ and $g^\pp$ have the same class in $\pi_0\Aut(X)$ then there is some $h\in \Aut^0(X)$ with $gh=g^\pp$. Thus \[(g^\pp)^*=(gh)^*=h^*g^*=g^*\]
			since $h^*$ is the identity on $N^1(X)_\RR$. As $\lambda_1(f)$ is the spectral radius of the action of $f^*$ on $N^1(X)$ we have that $\lambda_1(g)=\lambda_1(g^\pp)$. 
		\end{proof}
		This raises the following question of realizability. 
		\begin{question}\label{ques:autorez}
			\normalfont Let $g\in \Aut(X)$ and $h\in \Aut^0(X)$.  Now set $g^\pp=gh$. Then the above argument shows that $g^*$ and $(g^\pp)^*$ have the same eigenvalues when acting on $N^1(X)$. Therefore the set of potential arithmetic degrees of $g$ and $g^\pp$ are the same. In general, is the set of arithmetic degrees the same? In other words if $\alpha_g(P)=\vert \mu\vert$ then is there some point $Q$ with $\alpha_{gh}(Q)=\vert\mu\vert$?
		\end{question}
In the setting of Question \ref{ques:autorez} we have that $\lambda_1(g)=\lambda_1(g^\pp)$. This means we will have points $P,Q$ such that \[\alpha_g(P)=\lambda_1(g)=\lambda_1(g^\pp)=\alpha_{g^\pp}(Q).\] Consequently we obtain that the \emph{maximum arithmetic degree} of an automorphism only depends on the class of the automorphism in the component group $\pi_0\Aut(X)$. The question can then be reduced to the following. Suppose that $g$ is an automorphism and $1<\alpha_g(P)<\lambda_1(g)$ for some point $P\in X(\Qb)$. Then for all $h\in \Aut^0(X)$ is there a point $Q\in X(\Qb)$ with $\alpha_{gh}(Q)=\alpha_g(P)$. This question should have a positive answer when $X$ is a smooth surface, for in that case the eigenvalues of automorphisms are of the form $\lambda_1(g),\lambda_1(g)^{-1},\mu_1,\dots \mu_s$ where $\vert \mu_i\vert=1$ by \cite[2.4.3]{MR3289919}. Thus the question for smooth projective surfaces is reduced to the case of the maximum arithmetic degree being an invariant of the class in the component group, which we know has a positive answer.

		We obtain the following easy result that says that the Kawaguchi-Silverman conjecture for automorphisms is only meaningful for varieties with a complicated automorphism group. Recall that it is common to say that an automorphism $f\colon X\ra X$ \textbf{has positive entropy} if $\lambda_1(f)>1$. 
		
		\begin{theorem}\label{thm:trivthm}
			Let $X$ be a normal projective variety defined over $\Qb$. If $\Aut(X)$ is an algebraic group then $X$ has no automorphism with positive entropy. In particular, the Kawaguchi-Silverman conjecture is trivially true for automorphisms of $X$.
		\end{theorem}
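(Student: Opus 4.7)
The plan is to leverage the earlier corollary that $\lambda_1(g)$ depends only on the class of $g$ in $\pi_0\Aut(X)$, combined with the hypothesis that $\Aut(X)$ is an algebraic group. An algebraic group has only finitely many connected components, so $\pi_0\Aut(X)$ is a finite group under this hypothesis.

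First I would argue that every automorphism $g \in \Aut(X)$ satisfies $\lambda_1(g) = 1$. Since $\pi_0\Aut(X)$ is finite, the class of $g$ has finite order, so there is some $n \geq 1$ with $g^n \in \Aut^0(X)$. By the preceding lemma, $\Aut^0(X)$ acts trivially on $N^1(X)_\RR$ via pullback, so $(g^*)^n = (g^n)^* = \iden$ on $N^1(X)_\RR$. Therefore the linear endomorphism $g^* \colon N^1(X)_\RR \to N^1(X)_\RR$ has finite order, and in particular all of its eigenvalues are roots of unity. Since $\lambda_1(g)$ is the spectral radius of $g^*$ (see Proposition \ref{prop:dyndegproperties}), we conclude $\lambda_1(g) = 1$, i.e.\ $g$ does not have positive entropy.

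For the Kawaguchi--Silverman statement, I would recall that the arithmetic degree $\alpha_g(P)$ is always at least $1$ and that, for surjective endomorphisms (and in particular automorphisms), the upper bound $\alpha_g(P) \leq \lambda_1(g)$ holds. Combining with $\lambda_1(g) = 1$ forces $\alpha_g(P) = 1 = \lambda_1(g)$ for every $P \in X(\Qb)$, so the Kawaguchi--Silverman conjecture holds trivially for every automorphism of $X$.

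There is no real obstacle: the entire argument is a one-step application of the structure of $\Aut^0(X)$ together with the finiteness of $\pi_0\Aut(X)$ under the hypothesis. The only point that deserves a sentence of justification is that an algebraic group has finitely many connected components, which follows from its underlying scheme being of finite type.
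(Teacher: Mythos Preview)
Your proof is correct and follows essentially the same line as the paper: finiteness of $\pi_0\Aut(X)$ forces some power of $g$ into $\Aut^0(X)$, whence $(g^*)^n$ is the identity on $N^1(X)_\RR$, the eigenvalues of $g^*$ are roots of unity, and $\lambda_1(g)=1$. The only addition is that you spell out the inequality $1\le\alpha_g(P)\le\lambda_1(g)$ to justify the Kawaguchi--Silverman claim, which the paper leaves implicit.
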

		\begin{proof}
			If $\Aut(X)$ is a algebraic group then $\Aut(X)$ has finitely many components. Since the components of $\Aut(X)$ are precisely the  cosets of $\Aut^0(X)$ we have that $\pi_0\Aut(X)$ must be finite. In other words given $f\in \Aut(X)$ we have that $f^N\in \Aut^0(X)$ for some $N$. Then we have that $f^N$ acts trivially on $N^1(X)$ as $\Aut^0(X)$ acts trivially on $N^1(X)$. Since the eigenvalues of $f^N$ are all one we have that for all eigenvalues $\lambda$ of $f^*$ acting on $N^1(X)$ we have that $\lambda^N=1$. In other words the eigenvalues of $f^*$ are all roots of unity and consequently we have that $\lambda_1(f)=1$. 
		\end{proof}
		
		We also have the following useful result.
		
		\begin{lemma}[2.10 in \cite{autBrion}]\label{lem:stablem}
			Let $X$ be a projective variety defined over $\Qb$. Let $L$ be an ample line bundle on $X$. Let $\Aut(X,[L]_\textnormal{Num})$ be the subgroup of all elements  $[f]\in \pi_0\Aut(X)$ with $f^*L\Num L$. Then  $\Aut(X,[L]_\textnormal{Num})$ is finite.	
		\end{lemma}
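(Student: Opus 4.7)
The strategy is to show that $\Aut(X, [L]_{\textnormal{Num}})$, viewed as a subgroup scheme of $\Aut(X)$, is of finite type over $\Qb$; since any group scheme of finite type has only finitely many connected components, the image in $\pi_0\Aut(X)$ will then automatically be finite.

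First, by the preceding lemma $\Aut^0(X)$ acts trivially on $N^1(X)_\RR$, so $\Aut^0(X) \subseteq \Aut(X, [L]_{\textnormal{Num}})$. Consequently $\Aut(X,[L]_{\textnormal{Num}})$ is a union of cosets of $\Aut^0(X)$, i.e.\ a union of connected components of $\Aut(X)$; in particular it is both open and closed, and showing it is of finite type is equivalent to showing it has finitely many connected components.

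Next, I would realize those components inside a single Hilbert scheme of finite type. The graph construction $f \mapsto \Gamma_f \subseteq X \times X$ gives a locally closed immersion $\Aut(X) \hookrightarrow \textnormal{Hilb}(X \times X)$. Endow $X \times X$ with the ample line bundle $\McL = p_1^*L \otimes p_2^*L$; via the isomorphism $p_1 \colon \Gamma_f \xrightarrow{\sim} X$ one computes $\McL|_{\Gamma_f} \cong L \otimes f^*L$. Whenever $f^*L \Num L$, the class of this restriction equals the numerical class of $L^{\otimes 2}$. By Hirzebruch--Riemann--Roch the Euler characteristic $\chi(X, L^m \otimes (f^*L)^m)$ is a polynomial in the intersection numbers of the underlying numerical class, so it equals $\chi(X, L^{2m})$ for every $m$. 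Therefore every graph $\Gamma_f$ with $f \in \Aut(X,[L]_{\textnormal{Num}})$ has the same Hilbert polynomial $P(m) = \chi(X, L^{2m})$ with respect to $\McL$.

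Finally, by Grothendieck's construction the Hilbert scheme $\textnormal{Hilb}^P(X \times X)$ with this fixed Hilbert polynomial is projective, in particular of finite type. The graph immersion restricts to a locally closed immersion $\Aut(X,[L]_{\textnormal{Num}}) \hookrightarrow \textnormal{Hilb}^P(X \times X)$, so $\Aut(X,[L]_{\textnormal{Num}})$ is itself of finite type, hence has only finitely many connected components; since it contains $\Aut^0(X)$, its image in $\pi_0\Aut(X)$ is precisely its set of connected components, which is finite. The main technical subtlety is the boundedness step, namely that numerical equivalence of $f^*L$ and $L$ forces all graphs $\Gamma_f$ to share a common Hilbert polynomial; this is the moral content of the Matsusaka--Mumford boundedness principle and requires invoking Riemann--Roch to pass from equality of numerical classes to equality of Euler characteristics.
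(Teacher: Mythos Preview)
The paper does not supply its own proof of this lemma; it is simply quoted from Brion's notes (reference \cite{autBrion}), so there is nothing in the paper to compare against. Your argument is correct and is in fact the standard one that Brion gives: embed $\Aut(X)$ into $\textnormal{Hilb}(X\times X)$ via graphs, use that $f^*L\Num L$ forces all the graphs $\Gamma_f$ to have the same Hilbert polynomial with respect to $p_1^*L\otimes p_2^*L$, land in a projective Hilbert scheme, and conclude finite type.

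Two small points worth tightening. First, in the statement $\Aut(X,[L]_{\textnormal{Num}})$ is defined as a subgroup of $\pi_0\Aut(X)$, not of $\Aut(X)$; you are really arguing about its preimage in $\Aut(X)$ and then quotienting by $\Aut^0(X)$ at the end. You do say this in your last sentence, but it would be clearer to set up the notation that way from the start. Second, you invoke Hirzebruch--Riemann--Roch for the statement that $\chi(X,M^{\otimes m})$ depends only on the numerical class of $M$; in its usual form HRR needs $X$ smooth, whereas here $X$ is an arbitrary projective variety. The result you need is still true and follows from Snapper--Kleiman (the Euler characteristic is a numerical polynomial in the first Chern class), so cite that instead.
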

		One easily obtains the following.
		\begin{corollary}[2.11 and 2.12 in \cite{autBrion}]\label{cor:Brion}
			Let $X$ be a projective variety defined over $\Qb$.
			\begin{enumerate}
				\item The kernel of the action of $\pi_0 \Aut(X)$ on $N^1(X)$ is finite.
				\item  If the nef cone of $X$ is finitely generated and rational then $\Aut(X)$ is an algebraic group.
			\end{enumerate}
		\end{corollary}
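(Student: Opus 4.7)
The plan is to deduce both parts from Lemma \ref{lem:stablem}.

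For (1), I would fix any ample line bundle $L$ on $X$. If $[f]\in \pi_0\Aut(X)$ acts trivially on $N^1(X)$, then in particular $f^*L \Num L$, so $[f]\in \Aut(X,[L]_\textnormal{Num})$. Thus the kernel of the action of $\pi_0\Aut(X)$ on $N^1(X)$ is contained in the finite group $\Aut(X,[L]_\textnormal{Num})$, and is therefore itself finite.

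For (2), I would study the induced action on $N^1(X)_\RR$, which preserves $\Nef(X)$ since pull-back sends nef classes to nef classes. Under the hypothesis that $\Nef(X)$ is rational polyhedral, let $v_1,\dots,v_r$ denote the primitive integral generators of its extremal rays. Because each $f^*$ is an automorphism of the integer lattice $N^1(X)$ that permutes the extremal rays of $\Nef(X)$, it must satisfy $f^*v_i = v_{\sigma_f(i)}$ for some permutation $\sigma_f\in S_r$. Since $\Nef(X)$ has nonempty interior (it contains the ample cone), the $v_i$ span $N^1(X)_\QQ$, and therefore the action of $f^*$ on $N^1(X)_\RR$ is determined by $\sigma_f$. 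This produces a homomorphism $\rho\colon \pi_0\Aut(X)\to S_r$ whose image has size at most $r!$; by part (1), its kernel is finite. Hence $\pi_0\Aut(X)$ is finite.

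Combining this with the exact sequence $0\to \Aut^0(X)\to \Aut(X)\to \pi_0\Aut(X)\to 0$ and the fact that $\Aut^0(X)$ is a smooth connected algebraic group, I conclude that $\Aut(X)$ is a finite union of translates of $\Aut^0(X)$, hence itself an algebraic group. I do not anticipate any serious obstacle here; the only small point to verify carefully is that the primitive generators of the extremal rays of $\Nef(X)$ span $N^1(X)_\QQ$, which holds because $\Nef(X)$ has nonempty interior.
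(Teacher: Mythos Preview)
Your proof is correct and follows essentially the same approach as the paper: both parts are deduced from Lemma~\ref{lem:stablem}, and in part~(2) the key observation is that $f^*$ permutes the primitive integral ray generators $v_1,\dots,v_r$ of $\Nef(X)$. The only minor difference is in the concluding step of~(2): the paper observes directly that $\sum_i v_i$ is an ample class fixed by every $f^*$, so $\pi_0\Aut(X)\subseteq \Aut(X,[\sum_i v_i]_{\textnormal{Num}})$ and Lemma~\ref{lem:stablem} applies once more, whereas you factor the action through a homomorphism to $S_r$ and combine the finiteness of the image with part~(1) to bound the kernel. Both are equally valid; the paper's route is marginally shorter since it avoids invoking part~(1) again, while yours makes the structure of the argument (finite kernel, finite image) a bit more explicit.
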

		\begin{proof}
		We first prove (1). Fix an ample line bundle $L$. If $[f]\in \pi_0\Aut(X)$ and $[f]$ is in the kernel of the action of $\pi_0\Aut(X)$ on $N^1(X)$ then $[f]\in \Aut(X,[L]_\textnormal{Num})$ which by \ref{lem:stablem} is finite as needed. Now suppose that the nef cone of $X$ is finitely generated and rational It suffices to prove that $\pi_0\Aut(X)$ is finite. Fix $[f]\in \pi_0\Aut(X)$. Let $v_1,\dots v_r$ be the ray generators of $\Nef(X)$. We have that each $v_i$ is a primitive element of $N^1(X)$ in the sense that it is the first lattice point on the half line $\RR_{\geq 0}v_i$. Note that $f^*$ is an automorphism of the lattice $N^1(X)$. This is because $f^*$ is represented by an integral matrix, and so is its inverse $(f^{-1})^*$. Thus $\det f^*\circ (f^{-1})^*=1=\det f^*\det (f^{-1})^*$. It follows that $\det f\pm 1$. Therefore we must have that $f^*v_i=v_j$. Thus $f^*(\sum_{i=1}^rv_i)=\sum_{i=1}^rv_i$ and consequently $[f]$ preserves the ample class $\sum_{i=1}^rv_i$. By \ref{lem:stablem} we have that $\pi_0\Aut(X)$ is finite as needed.   
		\end{proof}
		We obtain the Kawaguchi-Silverman conjecture for automorphisms of any normal projective variety with finitely generated and rational nef cone.
	\begin{corollary}\label{cor:fgnefcone}
			Let $X$ be a normal projective variety over $\Qb$. If $X$ has a finitely generated and rational nef cone then $X$ has no automorphism of positive entropy. In particular, the Kawaguchi-Silverman conjecture trivially holds for all automorphisms of $X$.
		\end{corollary}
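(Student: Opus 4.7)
The plan is to assemble this corollary directly from the two ingredients already stated in this section, namely part (2) of Corollary \ref{cor:Brion} and Theorem \ref{thm:trivthm}. There is no real technical obstacle: the finite generation/rationality hypothesis on $\Nef(X)$ is exactly the input needed to turn $\Aut(X)$ into an algebraic group, and Theorem \ref{thm:trivthm} then immediately rules out positive entropy.

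First I would invoke Corollary \ref{cor:Brion}(2) to conclude that under the hypothesis that $\Nef(X)$ is finitely generated and rational, the automorphism group scheme $\Aut(X)$ is an algebraic group. Next I would apply Theorem \ref{thm:trivthm}: since $\Aut(X)$ is algebraic, $\pi_0\Aut(X)$ is finite, and so for any $f\in \Aut(X)$ some iterate $f^N$ lies in $\Aut^0(X)$ and therefore acts trivially on $N^1(X)_\RR$. This forces every eigenvalue of $f^*$ to be a root of unity, hence $\lambda_1(f)=1$, so $X$ has no automorphism of positive entropy.

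For the Kawaguchi--Silverman statement I would then remark that the arithmetic degree $\alpha_f(P)$ is always bounded above by $\lambda_1(f)$; since $\lambda_1(f)=1$ and $\alpha_f(P)\geq 1$ for any point $P$ with well-defined forward orbit, one automatically has $\alpha_f(P)=\lambda_1(f)=1$, so the Kawaguchi--Silverman conjecture holds trivially for every automorphism of $X$.

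The only step that requires any care is making sure that the conclusions of Corollary \ref{cor:Brion}(2) and Theorem \ref{thm:trivthm} chain together cleanly, but since Theorem \ref{thm:trivthm} is stated exactly under the hypothesis that $\Aut(X)$ is an algebraic group, the composition is immediate. I do not expect a genuine obstacle here; the corollary is essentially just the concatenation \emph{finitely generated rational nef cone} $\Rightarrow$ \emph{$\Aut(X)$ algebraic} $\Rightarrow$ \emph{no positive entropy automorphism} $\Rightarrow$ \emph{KSC trivially holds for automorphisms}.
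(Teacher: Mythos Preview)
Your proposal is correct and matches the paper's own proof essentially verbatim: the paper simply cites Corollary~\ref{cor:Brion}(2) to get that $\Aut(X)$ is an algebraic group and then invokes Theorem~\ref{thm:trivthm}. Your extra paragraph unpacking the Kawaguchi--Silverman consequence is fine but unnecessary, since that implication is already bundled into the statement of Theorem~\ref{thm:trivthm}.
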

		\begin{proof}
			By \ref{cor:Brion} we have that $\Aut(X)$ is an algebraic group. By \ref{thm:trivthm} we have the result. 
		\end{proof}
		We see that the Kawaguchi-Silverman conjecture for automorphisms of varieties with a finitely generated and rational nef cone is trivial. However, this leads to the question about varieties with finitely generated but non-rational nef cone. For example in \cite{MR3263669} there are examples of a Hyper-Kahler with Picard number 2 and an infinite automorphism group. In this case an automorphism of positive entropy may arise. 
		
		We would like to now define a subgroup of $\Aut(X)$ as those automorphisms which have dynamical degree 1. However, we run into the following problem. It is possible to have invertible integer matrices $A,B$ with $\det A=\det B=1$ and $\rho(A)=\rho(B)=1$ but $\rho(AB)>1$ where $\rho$ is the spectral radius function. Therefore it is a priori possible that there are automorphisms $f,g\in \Aut(X)$ with $\lambda_1(f)=\lambda_1(g)=1$ but $\lambda_1(fg)>1$. There is a way to avoid this issue when $X$ has finitely generated nef cone.
		\begin{definition}\label{def:DX}
			Let $X$ be a projective variety over $\Qb$ with finitely generated nef cone. That is $\Nef(X)_\RR$ is generated as a cone by finitely many real classes. Suppose that $\Nef(X)_\RR$ has rays $v_1,\dots v_r$. Then any surjective endomorphism of $X$ permutes the rays of $\Nef(X)$. In particular we have a homomorphism \[\mathfrak{r}\colon \pi_0 \Aut(X)\ra S_r\] where $S_r$ is the symmetric group on $r$ letters. Let $d_1$ be the size of the image of $\mathfrak{r}$. So $d_1$ is the smallest integer such that for all $f$ we have that $(f^{d_1})^*v_i=\lambda_i v_i$ for all $i$ and some real numbers $\lambda_i$. On the other hand the kernel of the action of $\pi_0 \Aut(X)$ on $N^1(X)$ is finite by \ref{cor:Brion}. Let $d_2$ be the size of this kernel and let  $d=\textnormal{lcm}(d_1,d_2)$. Now define $\mathfrak{D}(X)$ to be the subgroup of $\pi_0\Aut(X)$ generated by all $2d^{th}$ powers. That is
			\[\mathfrak{D}(X)=\langle [f^{2d}]\colon [f]\in \pi_0\Aut(X)\rangle\subseteq \pi_0\Aut(X). \]
			\end{definition}
	We think of $\mathfrak{D}(X)$ as the subgroup of all classes of automorphisms $\pi_0\Aut(X)$ which are simultaneously diagonalizable with positive eigenvalues. The basic properties of this group are outlined below.
		\begin{proposition}\label{prop:DXprop}
			Let $X$ be a projective variety over $\Qb$ with finitely generated nef cone. 
			\begin{enumerate}
				\item If $f_1,f_2\in \mathfrak{D}(X)$ then $f_1^*,f_2^*$ are simultaneously diagonalizable.
				\item There is an homomorphism $\textnormal{Lin}\colon \mathfrak{D}(X)\ra \textnormal{diag}_{\rho(X)}(\RR_{>0})\cong (\RR_{>0}^*)^{\rho(X)}$ with finite kernel. Here $\rho(X)$ is the Picard number of $X$,  $\textnormal{diag}_{\rho(X)}(\RR_{>0})$ are diagonal $\rho(X)\times \rho(X)$ matrices with positive entries and $\textnormal{Lin}([f])=f^*\colon N^1(X)_\RR\ra N^1(X)_\RR$. 
				\item The kernel of $\textnormal{Lin}\colon \mathfrak{D}(X)\ra \textnormal{diag}_{\rho(X)}(\RR_{>0})$ is precisely the set of $f\in  \mathfrak{D}(X)$ with $\lambda_1(f)=1$. 
			\end{enumerate}	
		\end{proposition}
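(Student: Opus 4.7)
The plan is to extract a common diagonalizing basis for the pullback action of $\mathfrak{D}(X)$ on $N^1(X)_\RR$ drawn from the ray generators of $\Nef(X)_\RR$, and then to read off all three claims from this diagonalization together with Corollary \ref{cor:Brion}(1) and a short determinant argument.

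For (1), the observation that drives everything is that $d_1$ divides $2d$ by construction, so for every $[f]\in \pi_0\Aut(X)$ the operator $(f^{2d})^*$ fixes each ray of $\Nef(X)_\RR$ setwise. Because $f^*$ sends $\Nef(X)_\RR$ into itself, each ray generator $v_i$ is mapped by $(f^{2d})^*$ to a strictly positive multiple of itself. Since $\Nef(X)_\RR$ is full-dimensional in $N^1(X)_\RR$, I can extract a subcollection $v_{i_1},\dots,v_{i_{\rho(X)}}$ forming a basis; in this fixed basis every generator $(h^{2d})^*$ of $\mathfrak{D}(X)$ is a diagonal matrix with positive entries. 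Since positive diagonal matrices commute and are closed under products and inverses, every element of the image of $\mathfrak{D}(X)$ in $\textnormal{GL}(N^1(X)_\RR)$ is positive diagonal in this common basis, which is exactly simultaneous diagonalizability.

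For (2), I would define $\textnormal{Lin}([f])$ to be the diagonal matrix representing $f^*$ in the basis chosen in (1). This is a well-defined group homomorphism, since pullback furnishes a homomorphism $\pi_0\Aut(X)\ra \textnormal{GL}(N^1(X))$ (using that $\Aut^0(X)$ acts trivially on $N^1(X)$), and by (1) its image lies in $\textnormal{diag}_{\rho(X)}(\RR_{>0})$. The kernel of $\textnormal{Lin}$ is contained in the kernel of the action of $\pi_0\Aut(X)$ on $N^1(X)$, which is finite by Corollary \ref{cor:Brion}(1).

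For (3), one direction is immediate: if $\textnormal{Lin}([f])=I$ then every eigenvalue of $f^*$ equals $1$, so $\lambda_1(f)=1$. For the converse, suppose $[f]\in\mathfrak{D}(X)$ with $\lambda_1(f)=1$. By (1), $f^*$ is diagonal with positive entries $\mu_1,\dots,\mu_{\rho(X)}$, so $\max_i\mu_i=\lambda_1(f)=1$ forces $0<\mu_i\leq 1$ for every $i$. Because $f^*$ and $(f^{-1})^*=(f^*)^{-1}$ are both integral on the lattice $N^1(X)$, one has $\det(f^*)=\pm 1$, and positivity of the $\mu_i$ upgrades this to $\det(f^*)=\prod_i\mu_i=1$. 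Combining $\prod_i\mu_i=1$ with $\mu_i\in(0,1]$ forces every $\mu_i=1$, so $f^*=I$ and $[f]\in\ker\textnormal{Lin}$. The main obstacle throughout is conceptual rather than technical: one must notice that the exponent $2d$ in Definition \ref{def:DX} was engineered precisely so that every element of $\mathfrak{D}(X)$ acts by a strictly positive diagonal in a common basis drawn from nef rays; once this is extracted, the remaining steps are a short piece of linear algebra together with the finiteness statement in Corollary \ref{cor:Brion}(1).
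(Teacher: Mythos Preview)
Your proposal is correct and follows essentially the same route as the paper: both arguments extract a common eigenbasis from the ray generators of $\Nef(X)_\RR$, use the exponent $2d$ to force positive diagonal entries, invoke Corollary~\ref{cor:Brion}(1) for the finite kernel, and finish (3) with the $\det f^*=1$ argument. One small remark: pullback is a priori an \emph{anti}-homomorphism $(f_1f_2)^*=f_2^*f_1^*$, so your claim that ``pullback furnishes a homomorphism'' is only justified once you have shown (as you do in (1)) that the image lies in a commutative subgroup; the paper makes this step explicit by checking $f_1^*f_2^*v_i=f_2^*f_1^*v_i$.
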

		\begin{proof}
			We write $f_1=\prod_{i=1}^{s_1} g_{i}^{2d}$ and $f_2=\prod_{i=1}^{s_2} h_i^{2d}$ where we use that $f_1,f_2$ represent classes in $\mathfrak{D}(X)$. Then $f_1^*=(g_{s_1}^{2d})^*\dots (g_{1}^{2d})^*$. Since each $(g_i^{2d})^*v_j=\lambda_{ij}v_j$ we have that  $f_1^* v_j=\prod_{i=1}^{s_1} \lambda_{ij}v_j$. Similarly we have  $(h_i^{2d})^*v_j=\mu_{ij}v_j$ so that $f_2^*v_j=\prod_{i=1}^{s_2}\mu_{ij}v_j$. Since some sub-set of the rays is a basis of $N^1(X)_\RR$ we have that $f_1^*$ and $f_2^*$ share a mutual basis of eigenvectors.
			
			Now suppose that $f\in \mathfrak{D}(X)$ and $\textnormal{Lin}(f)=\textnormal{identity}$. Then $f^*$ lies in the kernel of the action of $\pi_0\Aut(X)$ which is finite by \ref{cor:Brion}. On the other hand, the above calculation shows that for any $f_1,f_2$ we have that \[f_1^*f_2^*v_i=\left(\prod_{i=1}^{s_1}\lambda_{ij}\right)\cdot \left(\prod_{i=1}^{s_2}\mu_{ij}\right)v_i=f_2^*f_1^*v_i.\]
			Since some subset of the $v_i$ is a basis we have $f_1^*f_2^*=f_2^*f_1^*$ and so
			\[\textnormal{Lin}(f_1f_2)=(f_1f_2)^*=f_2^*f_1^*=f_1^*f_2^*=\textnormal{Lin}(f_1)\textnormal{Lin}(f_2)\]
			so $\textnormal{Lin}$ is a homomorphism as desired. Finally note that by the definition of $d$ we have that for any $f\in \pi_0\Aut(X)$ that $(f^d)^*v_i=\mu_i v_i$.  So $(f^{2d})^*v_i=\mu_i^2v_i$. Thus the eigenvalues of any $f\in \mathfrak{D}(X)$ are positive. Thus $\textnormal{Lin}(f)$ is a diagonal matrix with positive entries in the basis given by the rays with positive entries. Finally if $\textnormal{Lin}(f)$ is the identity then $\lambda_1(f)=1$. Conversely let $\lambda_1(f)=1$ with $f\in  \mathfrak{D}(X)$. Let $f^*v_i=\lambda_iv_i$ with $\lambda _i>0$. Then we have shown that $f^*$ is diagonal with eigenvalues $\lambda_i$. Then $
			\det f^*=\prod_i \lambda_i=1$. As $0<\lambda_1(f)\leq 1$ if some $\lambda_i<1$ then for the product to equal one we must have some $\lambda_j>1$. It follows that $\lambda_i=1$ for all $i$. Since $f^*$ is diagonal we have that $f^*$ is the identity. So $\ker \textnormal{Lin}=\{f\in \mathfrak{D}(X)\colon \lambda_1(f)=1 \}$ and consequently this set is finite. 
			\end{proof}
		We can now give a group theoretic criterion for when a variety has an automorphism with positive entropy in terms of the component group. Let $[f]\in \pi_0 \Aut(X)$. It is certainly a necessary condition for $\lambda_1(f)>1$ that $[f]$ have infinite order in $\pi_0\Aut(X)$. We show that this is in fact sufficient. In other words, the obvious necessary condition is also sufficient. 
		\begin{theorem}[Criterion for when a variety with finitely generated nef cone has an automorphism of positive entropy]\label{thm:DXthm}
			Let $X$ be a normal projective variety over $\Qb$ with a finitely generated nef cone. Let $f\in \pi_0\Aut(X)$. Then $\lambda_1(f)>1\iff$ f has infinite order in $\pi_0\Aut(X)$. In particular a normal projective variety $X$ with finitely generated nef cone has an automorphism of positive entropy if and only if $\pi_0 \Aut(X)$ has an element of infinite order.
		\end{theorem}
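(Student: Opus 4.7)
The plan is to leverage the subgroup $\mathfrak{D}(X) \subseteq \pi_0\Aut(X)$ and the homomorphism $\textnormal{Lin}\colon \mathfrak{D}(X) \to \textnormal{diag}_{\rho(X)}(\RR_{>0})$ from Definition \ref{def:DX} and Proposition \ref{prop:DXprop}. The backward implication is essentially a repeat of Theorem \ref{thm:trivthm}: if $[f]$ has finite order $n$ in $\pi_0\Aut(X)$, then $f^n \in \Aut^0(X)$, which acts trivially on $N^1(X)_\RR$ by the first lemma of the section, so every eigenvalue of $f^*$ is an $n$-th root of unity and $\lambda_1(f)=1$.

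For the forward direction, suppose $[f]$ has infinite order in $\pi_0\Aut(X)$. Set $g = f^{2d}$ with $d$ as in Definition \ref{def:DX}, so that $g \in \mathfrak{D}(X)$; note $g$ still has infinite order in $\pi_0\Aut(X)$, since any relation $g^m = e$ would give $f^{2dm} = e$. The first substantive step is to show that $\textnormal{Lin}(g)$ has infinite order in the multiplicative group $\textnormal{diag}_{\rho(X)}(\RR_{>0})$. Indeed, if $\textnormal{Lin}(g)^m$ were the identity for some $m\geq 1$, then $g^m \in \ker\textnormal{Lin}$, and this kernel is finite by Proposition \ref{prop:DXprop}(2); consequently a further power of $g$ would be trivial in $\pi_0\Aut(X)$, contradicting infinite order.

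It then remains to convert infinite order of $\textnormal{Lin}(g)$ into $\lambda_1(g)>1$. Since $g^*$ is an automorphism of the lattice $N^1(X)_\ZZ$, its determinant is $\pm 1$; but $\textnormal{Lin}(g)$ is diagonal with positive real entries, so in fact $\det\textnormal{Lin}(g) = +1$. Writing the diagonal entries as $\mu_1,\dots,\mu_{\rho(X)} > 0$, we have $\prod_i \mu_i = 1$. Infinite order of $\textnormal{Lin}(g)$ prevents all $\mu_i$ from equalling $1$, so some $\mu_i \ne 1$; combined with the product constraint, at least one $\mu_j > 1$. Hence $\lambda_1(g) \geq \mu_j > 1$, and since $\lambda_1(g) = \lambda_1(f^{2d}) = \lambda_1(f)^{2d}$ (as $\rho(A^n) = \rho(A)^n$ for any linear map), we conclude $\lambda_1(f) > 1$.

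The main obstacle has essentially been discharged by the construction of $\mathfrak{D}(X)$ in Proposition \ref{prop:DXprop}; once the simultaneously-diagonalizing subgroup and the finite-kernel homomorphism $\textnormal{Lin}$ are in hand, the theorem reduces to a short pigeonhole plus determinant argument. The only subtlety worth flagging is the role of the integer $d$ from Definition \ref{def:DX}: it is chosen precisely so that passing to the $(2d)$-th power simultaneously kills the permutation action on the rays of $\Nef(X)$ and the finite kernel of the $\pi_0\Aut(X)$-action on $N^1(X)$, and forces the diagonalized eigenvalues on the nef rays to be positive real. Without this tailoring one would have to contend with negative or non-diagonalizable contributions that obstruct the elementary determinant argument in the last step.
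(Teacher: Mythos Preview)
Your proof is correct and follows essentially the same approach as the paper: both pass to $f^{2d}\in\mathfrak{D}(X)$ and exploit the finiteness of $\ker\textnormal{Lin}$ from Proposition~\ref{prop:DXprop}. The only difference is cosmetic: the paper invokes part~(3) of Proposition~\ref{prop:DXprop} directly (which says $\ker\textnormal{Lin}=\{g\in\mathfrak{D}(X):\lambda_1(g)=1\}$) to conclude immediately that $\lambda_1(f^{2d})=1$ forces $f^{2d}\in\ker\textnormal{Lin}$, whereas you reprove the determinant argument behind part~(3) inline; and for the easy direction the paper argues directly via $\lambda_1(f^n)=\lambda_1(f)^n$ while you take the contrapositive through $\Aut^0(X)$.
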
	
		\begin{proof}
			If $f\colon X\ra X$ is an automorphism and $\lambda_1(f)>1$ then as $\lambda_1(f^n)=\lambda_1(f)^n$ we see that $f$ has infinite order. On the other hand suppose that $f$ has infinite order. Let $d$ be as in \ref{def:DX}. Then $f^{2d}\in \mathfrak{D}(X)$. Towards a contradiction suppose that $\lambda_1(f)=1$. Then $\lambda_1(f^{2d})=1$ and so $f^{2d}$ lies in the kernel of the action of $\pi_0\Aut(X)$ by \ref{prop:DXprop}. By \ref{cor:Brion} this group has finite order and $(f^{2d})^N=f^{2dN}=\textnormal{identity}$ contradicting that $f$ had infinite order. 
		\end{proof}
		Thus to produce examples of varieties with automorphisms of positive entropy it suffices to produce varieties with finitely generated nef cone with component group having an element of positive entropy. On the other hand, the component group is still currently a mysterious object. It was only recently shown by Lesieutre in \cite{MR3773792} that $\pi_0 \Aut(X)$ can be non-finitely generated. On the other hand, it is a folklore question that asks if there exists infinite finitely presented groups with every element of finite order. See \cite[Section 1]{0704.2899} or \cite{78410}.  One might ask if such a group can arise as the automorphism group of a projective variety with a finitely generated nef cone.  
		\begin{question}
			Let $X$ be a normal projective variety defined over $\Qb$ with finitely generated nef cone. Is it possible that $\pi_0\Aut(X)$ is finitely presented but $X$ has no automorphism of positive entropy? By \ref{thm:DXthm} this is equivalent to asking if $\pi_0 \Aut(X)$ can be finitely presented with no element of infinite order.
		\end{question}
	
We finally note that it may be useful to apply this same analysis to some of the other cones sitting inside $N^1(X)_\RR$ such as the closure of the big cone. Even if the nef cone is not finitely generated, perhaps one of these other cones could be and similar results could be applied.	
		
We now turn to the Kawaguchi-Silverman conjecture for automorphisms. 
		
		\begin{definition}
			Let $X$ be a $\QQ$-factorial normal projective variety with at worst terminal singularities and finitely generated not necessarily rational nef cone. \begin{enumerate}
				\item Let $\phi\colon X\ra Y$ be a small contraction and $\phi^+\colon X^+\ra Y$ an associated flip. We say that $\phi$ is  \textbf{polyhedral} if $X^+$ also has finitely generated nef cone.
				\item  Suppose that $X$ admits an MMP
				\[X=X_0\dashrightarrow X_1\dashrightarrow \dots\dashrightarrow X_r\]
				with each $X_{i}\dashrightarrow X_{i+1}$ either a divisorial, flipping, or fibering contraction associated to a $K_{X_i}$-negative extremal ray, and either $X_r$ is minimal or $X_{r-1}\ra X_r$ is a fibering contraction. We call the MMP \textbf{polyhedral} if each flipping contraction is polyhedral. 
				\item Let $\mathscr{P}$ be the category of all $\QQ$-factorial normal projective varieties with at worst terminal singularities and finitely generated not necessarily rational nef cone that \textbf{admit a polyhedral MMP}.  We let $\mathscr{P}_{-\infty}$ be the sub-category of $\mathscr{P}$ that admit a \textbf{tractable polyhedral MMP} ending at a point. That is all varieties $X$ which admit an MMP
				\[\xymatrix{X=X_0\ar@{.>}[r]^{\phi_1}& X_1\ar@{.>}[r]^{\phi_2}&...\ar@{.>}[r]^{\phi_r}& X_r}\]
				with each $\phi_i$ a divisorial, fibering, or polyhedral flipping contraction and $X_r$ a Q-abelian variety.
				\end{enumerate}
		\end{definition}
		We have the following easy result.
		\begin{theorem}\label{thm:autthm1}
			Suppose that the Kawaguchi-Silverman conjecture holds for automorphisms of minimal varieties with finitely generated nef cones. Then the Kawaguchi-Silverman conjecture for automorphisms holds for all varieties in $\mathscr{P}$.
		\end{theorem}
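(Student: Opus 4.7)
The plan is to run an $f$-equivariant polyhedral MMP and verify that the Kawaguchi--Silverman conjecture is preserved at each step, reducing to the minimal endpoint handled by hypothesis. I induct on $\dim X$; the case $\dim X = 0$ is trivial. For $f\colon X\ra X$ an automorphism with $X\in \mathscr{P}$, finite generation of $\Nef(X)_\RR$ implies that $f^*$ permutes the finitely many extremal rays, so some iterate $f^N$ fixes each $K_X$-negative extremal ray. Applying \ref{lem:iterationlemma2} for divisorial and fibering contractions and \ref{lem:extensionlem2} for the (polyhedral) flipping contractions, we obtain an $f^N$-equivariant polyhedral MMP
\[
X = X_0 \dashrightarrow X_1 \dashrightarrow \cdots \dashrightarrow X_r
\]
with descended morphisms $f_i\colon X_i\ra X_i$. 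Since KSC for $f$ is equivalent to KSC for $f^N$, we may work with $f^N$ throughout.

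Next, I would check that each birational step preserves KSC. If $\phi_i$ is a divisorial contraction or a polyhedral flipping contraction, then $f_{i+1}$ is again an automorphism: it is surjective and birational, and by \ref{prop:surjective=finite} it is finite, so by Zariski's main theorem applied on the isomorphism locus of $\phi_i$ it is an isomorphism. Dynamical degrees are invariant under birational conjugacies of automorphisms, so $\lambda_1(f_i) = \lambda_1(f_{i+1})$. A point $P \in X_i(\Qb)$ whose $f_i$-orbit is Zariski dense cannot lie in the exceptional locus of $\phi_i$, so $\phi_i(P)$ has a Zariski dense $f_{i+1}$-orbit and $\alpha_{f_i}(P) = \alpha_{f_{i+1}}(\phi_i(P))$ by the standard functoriality of the arithmetic degree under semiconjugating birational morphisms. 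Thus KSC transfers across each birational step in both directions.

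At the endpoint, if $X_r$ is minimal then the finite generation of $\Nef(X_r)_\RR$ is preserved by the polyhedral MMP, so the hypothesis yields KSC for $f_r$, and chasing back through the birational steps gives KSC for $f$. If instead $X_{r-1}\ra X_r$ is a Mori fiber space, then $\dim X_r < \dim X$ and $\Nef(X_r)_\RR$ remains finitely generated (since it is the preimage under $\pi^*$ of a polyhedral cone under a linear map). Granting that $X_r$ inherits a polyhedral MMP, the inductive hypothesis applies to $f_r$, and one transfers KSC back to $f_{r-1}$ using that an automorphism commuting with a Mori fiber space acts with trivial action on the Picard group of the general Picard-number-one fiber, so the relative first dynamical degree equals one and $\lambda_1(f_{r-1}) = \lambda_1(f_r)$; arithmetic degrees then lift from base to total space for general points with dense orbits. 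The main obstacle is precisely this Mori fiber space endpoint: one must verify that the base $X_r$ of an MFS arising in a polyhedral MMP on $X$ again admits a polyhedral MMP, so the induction on dimension closes within $\mathscr{P}$, and one must carefully pull back arithmetic degrees from $f_r$ to $f_{r-1}$. The birational reductions are routine given the equivariant MMP machinery and extension properties already established.
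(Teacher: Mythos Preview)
Your overall strategy is right, but you induct on $\dim X$ whereas the paper inducts on the Picard number $\rho(X)$, and that choice is exactly what creates the obstacle you flag at the Mori-fiber-space endpoint. Under $\rho$-induction both divisorial and fibering contractions drop the invariant by one, so the inductive hypothesis fires immediately after the \emph{first} non-flip step, not only at the terminal MFS. Concretely, the paper does not assemble the full equivariant MMP up front: it passes to the dual cone to see that $\NE(X)_\RR$ is polyhedral, chooses $N$ so that $f^N_*$ fixes all extremal rays, takes the first contraction $\phi_1\colon X\ra X_1$, and descends $f^N$ via \ref{lem:iterationlemma2}, \ref{lem:extensionlem2}, or \cite[6.2]{1802.07388} according to the type. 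The key observation is then that the \emph{tail} $X_1\dashrightarrow\cdots\dashrightarrow X_r$ of the given polyhedral MMP for $X$ is itself a polyhedral MMP for $X_1$, certifying $X_1\in\mathscr{P}$, so the inductive hypothesis applies directly to $f_1$ with $\rho(X_1)<\rho(X)$. For flips one passes to $X^+$ at the same Picard number and repeats, relying on termination of flips to eventually reach a divisorial or fibering step (or a minimal model, where the standing hypothesis applies).

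This tail argument is precisely what dissolves your worry about whether the MFS base inherits a polyhedral MMP: with $\rho$-induction you never need to manufacture a \emph{new} MMP on $X_r$, only to hand off to $X_1$, which already sits inside the chain you were given. By contrast, your dimension induction forces you to carry KSC unchanged across the entire birational part and then invoke the hypothesis only on $X_r$, which has no remaining tail --- hence the gap you identify. For the actual KSC transfer across the fibering step the paper simply invokes \cite[6.3]{1802.07388} rather than the relative-dynamical-degree sketch you give; the birational transfers are handled as you do.
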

		\begin{proof}
			Let $X\in \mathscr{P}$. We induct on the Picard number $\rho(X)$. If $\rho(X)=1$ then the Kawaguchi-Silverman conjecture is true for all automorphisms of $X$. So we may assume $\rho(X)>1$. If $X$ is minimal we are done by assumption. So assume that $X$ is not minimal. Then as $X\in \mathscr{P}$ we have a polyhedral MMP 
			\[X=X_0\dashrightarrow X_1\dashrightarrow \dots\dashrightarrow X_r\]
			Since $X$ is not minimal and the contractions are contractions of $K_{X_i}$-negative extremal rays we have that $r>0$. Note that
			\[X_i\dashrightarrow X_{i+1}\dashrightarrow \dots\dashrightarrow X_r\]
			is a polyhedral MMP for all $i>0$. So each $X_i$ is in $\mathscr{P}$. This is because if $X\ra Y$ is a divisorial or fibering contraction then if $X$ has a finitely generated nef cone then so does $Y$ because the nef cone of $Y$ is the face of the nef cone of $X$. As $X$ has a finitely generated nef cone, all its faces are also finitely generated.  On the other hand since we have assumed that each flipping contraction is polyhedral, we are guaranteed that all flips $X^+$ that arise in the MMP also have finitely generated nef cone. Now let $f\colon X\ra X$ be an automorphism. As $\Nef(X)_\RR$ has dual cone $\NE(X)_\RR$ we have that the closed cone of curves is finitely generated. Then as $f_*$ preserves the closed cone of curves we have that $f_*$ permutes the rays of  $\NE(X)_\RR$. Thus for some $N>0$ we have that $f^N_*$ fixes the rays of $\NE(X)_\RR$. Let $\phi\colon X\ra X_1$ be the first contraction. Suppose that $\phi_1$ is induced by the contraction of an extremal ray $\mathfrak{R}$. Suppose first that $\mathfrak{R}$ gives a divisorial contraction.  Since $f^N_* \mathfrak{R}=\mathfrak{R}$ by \ref{lem:iterationlemma2} we have a diagram
			\[\xymatrix{X\ar[d]_{\phi_1}\ar[r]^{f^N}&X\ar[d]^{\phi_1}\\ X_1\ar[r]_{f_1}& X_1}\]  
			By induction the Kawaguchi-Silverman conjecture holds for $f_1$ and since $\phi_1$ is birational it also holds for $f^N$ and so $f$. Now suppose that $\mathfrak{R}$. By \cite[6.2]{1802.07388} we have a diagram
			\[\xymatrix{X\ar[d]_{\phi_1}\ar[r]^{f^N}&X\ar[d]^{\phi_1}\\ X_1\ar[r]_{f_1}& X_1}\]
		    with $f_1$ an automorphism. By induction the Kawaguchi-Silverman conjecture holds for $f_1$ and so by \cite[6.3]{1802.07388} the Kawaguchi-Silverman conjecture holds for $f^N$ and so for $f$ as well. Thus we may assume that $\mathfrak{R}$ is a flipping contraction. Since $f^N_* \mathfrak{R}=\mathfrak{R}$ we have that $f^N$ extends to a morphism $f^+\colon X^+\ra X^+$ that is birationally conjugate to $f$ by \ref{lem:extensionlem2}. So the Kawaguchi-Silverman conjecture for $f$ is equivalent to the Kawaguchi-Silverman conjecture for $f^+$. Now repeat the procedure for $f^+$. Either we eventually arrive at a divisorial or fibering contraction and apply the earlier arguments, or the MMP is a series of polyhedral flips terminating at a minimal model. By assumption the Kawaguchi-Silverman conjecture holds for minimal models and so for $f^N$ and consequently $f$.   
			\end{proof}
		The ideas in the proof give an argument for the triviality of the Kawaguchi-Silverman conjecture for varieties in  $\mathscr{P}_{-\infty}$. 
		\begin{corollary}\label{cor:autcor1}
			Let $X\in \mathscr{P}_{-\infty}$. Then $X$ has no automorphism with positive entropy.  In particular the Kawaguchi-Silverman conjecture holds and every element of $\pi_0\Aut(X)$ has finite order. 
		\end{corollary}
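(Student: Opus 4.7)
The plan is to show that every automorphism $f$ of a variety $X\in\mathscr{P}_{-\infty}$ satisfies $\lambda_1(f)=1$. Granted this, Theorem \ref{thm:DXthm} immediately implies that every element of $\pi_0\Aut(X)$ has finite order, and since $1\leq \alpha_f(P)\leq \lambda_1(f)=1$ holds for any $\Qb$-point $P$ under an automorphism, the Kawaguchi--Silverman conjecture reduces to the trivial equality $\alpha_f(P)=1=\lambda_1(f)$.

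To prove $\lambda_1(f)=1$, I would induct on the length $r$ of the tractable polyhedral MMP
\[
X=X_0\dashrightarrow X_1\dashrightarrow\cdots\dashrightarrow X_r
\]
witnessing $X\in\mathscr{P}_{-\infty}$; the base case $r=0$ is immediate since then $X$ is zero-dimensional. For $r\geq 1$, after replacing $f$ by a sufficiently divisible iterate, Theorem \ref{thm:MengZhangMMP} together with Lemmas \ref{lem:iterationlemma2} and \ref{lem:extensionlem2} renders the MMP $f$-equivariant with descended maps $f_i\colon X_i\to X_i$, each of which is actually an automorphism (the descended and flipped maps inherit a two-sided inverse from $f^{-1}$ via the same lemmas applied to $f^{-1}$). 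In every case---$\phi_1$ divisorial, polyhedral flipping, or a Mori fiber space---the truncated MMP $X_1\dashrightarrow\cdots\dashrightarrow X_r$ exhibits $X_1\in\mathscr{P}_{-\infty}$: the cone $\Nef(X_1)_\RR$ appears as a face of $\Nef(X)_\RR$ (the face orthogonal to the contracted extremal ray) in the divisorial and fibering cases, and is finitely generated by the polyhedrality hypothesis in the flipping case. The inductive hypothesis applied to $f_1$ then yields $\lambda_1(f_1)=1$.

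It remains to transfer $\lambda_1(f_1)=1$ to $\lambda_1(f)=1$. If $\phi_1$ is divisorial or a polyhedral flip then $f$ and $f_1$ are birationally conjugate, so $\lambda_1(f)=\lambda_1(f_1)=1$ by birational invariance of the first dynamical degree for morphisms. If $\phi_1$ is a Mori fiber space then $\phi_1^*\colon N^1(X_1)_\RR\hookrightarrow N^1(X)_\RR$ has one-dimensional cokernel (relative Picard number one) and is $f^*$-equivariant, so $f^*$ takes block upper-triangular form with diagonal blocks $f_1^*$ and a scalar $\mu$ acting on the rank-one quotient lattice. Since $f^*$ is an automorphism of the integral lattice $N^1(X)_\ZZ$ preserving $\phi_1^* N^1(X_1)_\ZZ$, its induced action on the rank-one quotient must be $\pm 1$; hence $\mu=\pm 1$ and the spectral radius of $f^*$ equals $\max(\lambda_1(f_1),1)=1$, giving $\lambda_1(f)=1$.

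The main obstacle lies in the bookkeeping of the second paragraph: checking that each descended $f_i$ and each flipped $f_i^+$ is genuinely an automorphism rather than merely a surjective endomorphism, that $\Nef(X_1)_\RR$ remains finitely generated at each step (including the persistence of $\QQ$-factoriality and terminal singularities through the contractions and flips), and that the block-triangular decomposition in the Mori fiber space case respects the integral structure on $N^1$. All three points are governed by the polyhedrality assumption built into the definition of $\mathscr{P}_{-\infty}$ and by the Meng--Zhang machinery already assembled in the paper, so once they are verified the inductive argument runs essentially mechanically.
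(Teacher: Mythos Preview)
Your argument is correct and runs parallel to the paper's, but with two genuine differences worth noting.

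First, the induction variable: you induct on the length $r$ of the tractable polyhedral MMP, whereas the paper inducts on the Picard number. This is mostly cosmetic, but your choice handles flips more cleanly (a flip decreases $r$ but not $\rho$), while the paper has to pass through all the flips first, as in the proof of Theorem~\ref{thm:autthm1}, before reaching a divisorial or fibering step.

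Second, and more substantively, the control of the one new eigenvalue in the fibering case. The paper argues via determinants: writing the eigenvalues of $f_{i+1}^*$ as $\mu_1,\dots,\mu_{\rho-1}$ with $\lvert\mu_k\rvert\le 1$, it uses $\lvert\det f_i^*\rvert=1$ to force $\lvert\gamma\rvert=1$ (implicitly using that $\lvert\det f_{i+1}^*\rvert=1$ as well, so in fact all $\lvert\mu_k\rvert=1$). You instead observe that $f^*$ preserves the saturated sublattice $\phi_1^*N^1(X_1)\subset N^1(X)$, hence acts on the rank-one quotient by $\pm 1$. Your lattice argument is a little more direct and does not appeal to the inductive information about the $\mu_k$ at all.

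One correction: your invocation of Theorem~\ref{thm:MengZhangMMP} is misplaced. That theorem assumes the existence of an int-amplified endomorphism, and an automorphism can never be int-amplified (all eigenvalues of $f^*$ would need modulus $>1$, contradicting $\lvert\det f^*\rvert=1$). What actually makes the MMP $f$-equivariant here is the finite generation of $\Nef(X)_\RR$, hence of $\NE(X)_\RR$: then $f_*$ permutes the finitely many extremal rays, so a suitable iterate fixes the ray being contracted, and Lemmas~\ref{lem:iterationlemma2} and~\ref{lem:extensionlem2} apply. This is precisely the mechanism in the proof of Theorem~\ref{thm:autthm1}; cite that instead.
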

		\begin{proof}
			Let $X\in \mathscr{P}_{-\infty}$. We induct on the Picard number $\rho(X)$. If $\rho(X)=1$ then the Nef cone of $X$ is finitely generated and rational. So by \ref{cor:fgnefcone} we have that $X$ has no automorphism of positive entropy. Now let $\rho(X)>1$.  A variety in  $\mathscr{P}_{-\infty}$ has a tractable polyhedral MMP \[X=X_0\dashrightarrow X_1\dashrightarrow \dots\dashrightarrow X_r\] ending in a point. Arguing as in the proof of \ref{thm:autthm1} we eventually have a diagram
			\[\xymatrix{X_i\ar[r]^{f_i}\ar[d]_{\phi_i}& X_i\ar[d]^{\phi_i}\\ X_{i+1}\ar[r]_{f_{i+1}}& X_{i+1}}\]
			where $f_i$ and $f_{i+1}$ are automorphisms, and $\phi_i$ is a fibering or divisorial contraction. Moreover $f_i^*$ and $f^*$ have the same eigenvalues and $\rho(X)=\rho(X_i)$. Note that $\rho(X_{i+1})=\rho(X_i)-1=\rho(X)-1$. By induction we see that $f_{i+1}$ does not have positive entropy. Let $\mu_1,...,\mu_{\rho(X)-1}$ be the eigenvalues of $f_{i+1}^*$. As $\lambda_1(f_{i+1})=1$ we have $\vert \mu_k\vert\leq 1$ for all $k$. Since the diagram above commutes and  $\rho(X_i)-1=\rho(X_{i+1})$  we have that the eigenvalues of $f_i^*$ are the eigenvalues of $f_{i+1}$ along with a single potentially new eigenvalue $\gamma$. It suffices to show that $\vert\gamma\vert\leq 1$. We have that
			\[1=\vert\det f_{i}^*\vert=\vert \gamma \cdot \mu_1\dots \mu_k \vert=\vert \lambda\vert\]
			as needed. So $X$ has no automorphism of positive entropy. By \ref{thm:DXthm} we have that $\pi_0 \Aut(X)$ does not contain an element of infinite order. 
			\end{proof}
		\end{section}	

	\bibliographystyle{plain}
\bibliography{bib}
\end{document}